\newtheorem{theorem}{Theorem}[section]
\newtheorem{lemma}[theorem]{Lemma}
\newtheorem{proposition}[theorem]{Proposition}
\newtheorem{corollary}[theorem]{Corollary}
\theoremstyle{definition}
\newtheorem{definition}[theorem]{Definition}
\newtheorem{example}[theorem]{Example}
\newtheorem{remark}[theorem]{Remark}
\def\cc{{\mathbb C}}
\def\zz{{\mathbb Z}}
\def\nn{{\mathbb N}}
\def\qq{{\mathbb Q}}
\def\pp{{\mathbb P}}
\def\ff{{\mathbb F}}
\def\Osh{{\mathcal O}}
\def\Eff{\operatorname{Eff}}
\def\Cl{\operatorname{Cl}}
\def\Nef{\operatorname{Nef}}
\def\BNef{\operatorname{BNef}}
\def\BEff{\operatorname{BEff}}
\newcommand{\oo}{\mathcal{O}}
\begin{document}

\title{Cox rings of  K3 surfaces of Picard number three}

\author{Michela Artebani}
\address{
Departamento de Matem\'atica, \newline
Universidad de Concepci\'on, \newline
Casilla 160-C,
Concepci\'on, Chile}
\email{martebani@udec.cl}

\author{Claudia Correa}
\address{
Departamento de Matem\'atica, \newline
Universidad de Concepci\'on, \newline
Casilla 160-C,
Concepci\'on, Chile}
\email{claucorrea@udec.cl}

\author{Antonio Laface}
\address{
Departamento de Matem\'atica, \newline
Universidad de Concepci\'on, \newline
Casilla 160-C,
Concepci\'on, Chile}
\email{alaface@udec.cl}

\subjclass[2010]{14J28, 14C20, 14J50.}
\keywords{Cox rings, K3 surfaces} 
\thanks{The authors have been partially 
supported by Proyecto FONDECYT Regular 
N. 1160897  (first and second author),
Proyecto FONDECYT Regular N. 1190777 (third author)
and  Proyecto Anillo ACT 1415 PIA Conicyt. 
The second author has been supported by CONICYT PCHA/DoctoradoNacional/2012/21120687.}

\begin{abstract}
Let $X$ be a projective K3 surface over $\cc$. 
We prove that its Cox ring $R(X)$ 
has a generating set whose degrees  
are either classes of smooth rational curves, 
sums of at most three elements of the Hilbert basis 
of the nef cone, or of the form $2(f+f')$, 
where $f,f'$ are classes of elliptic fibrations with $f\cdot f'=2$.
This result and  techniques using Koszul's type 
exact sequences allow to determine a 
generating set for the Cox ring of 
all Mori dream K3 surfaces of Picard number three
which is minimal in most cases.
A presentation for the Cox ring is given 
in some special cases with few generators.
\end{abstract}
\maketitle

\tableofcontents

\section*{Introduction}

The Cox ring of a normal projective variety $X$ 
defined over the complex numbers with finitely generated and free 
divisor class group $\Cl(X)$ is the graded algebra \cite{A.D.H.L}
\[
R(X):=\bigoplus_{[D]\in \Cl(X)}H^0(X,\oo_X(D)).
\]
The variety $X$ is called Mori dream space when the Cox ring is finitely generated.
Important examples of Mori dream spaces are toric varieties, 
whose Cox ring is a polynomial ring.
In this context there are two main problems: 
determine conditions on $X$ such that $R(X)$ is finitely generated,
and find an explicit presentation for $R(X)$. 
An important property of Mori dream spaces is  
that any such variety is a GIT quotient of an open Zariski subset 
of an affine space, the spectrum of $R(X)$, by the action of a quasitorus.
This allows to define homogeneous coordinates on $X$,
as in the case of the projective space, and allows a combinatorical 
approach to certain geometric and arithmetic properties of $X$, 
as in the case of 
toric varieties \cite{A.D.H.L}.

This paper deals with Cox rings of K3 surfaces.
In \cite{A.H.L} and \cite{JM} the authors proved independently 
that the Cox ring of a K3 surface is finitely generated 
if and only if its effective cone is polyhedral, 
or equivalently if its automorphism group is finite. 
K3 surfaces with this property have been classified 
in \cite{VN1,VN,VN2,P.S, EV1} 
(see also \cite[\S 5.1.5]{A.D.H.L}).
The main purpose of the paper is to develop computational tools to compute 
Cox rings of K3 surfaces.  To achieve this goal, we first extend the known 
techniques for finding generators of the Cox ring.
This allows to prove a general theorem on Cox rings of K3 surfaces (Theorem \ref{gen}): 
the degrees of the generators are either classes of $(-2)$-curves, nef classes which are sums of at most three 
elements of the Hilbert basis of the nef cone (allowing repetitions),
or classes of the form $2(f+f')$, where $f,f'$ are classes of fibers of 
elliptic fibrations with $f\cdot f'=2$.
Afterwards, we apply this result and other techniques 
based on Koszul exact sequences
to determine the degrees a generating set for 
the Cox rings of general elements
of the $26$ families of Mori dream K3 surfaces 
of Picard number three (Theorem \ref{main}). 
The proof is obtained by means of several
computational programs implemented in MAGMA \cite{B.C.P},
which allow first to compute the effective and nef cone 
of the K3 surface, then to find the degrees of a set of generators 
of $R(X)$, and finally to check the minimality of such set (see Section \ref{magma}).
In some special cases, when the Cox ring has few generators,
we are able to provide a presentation for the Cox ring (see Section \ref{special}).

\section{Preliminaries}
We will work over the field $\cc$ of complex numbers.

\subsection{Cox rings}

In this section we will give the necessary preliminaries on Cox rings (see \cite{A.D.H.L}). 

\begin{definition}\label{defcox}
Let $X$ be a normal projective variety defined over $\cc$ 
and assume that the divisor class group $\Cl(X)$ is finitely generated and free. 
The \emph{Cox ring} of $X$ is defined as
\[
R(X):=\bigoplus_{D\in K}H^0(X,\oo_X(D)),
\]
where $K\subseteq \rm{WDiv}(X)$ is a subgroup of the group 
of Weil divisors such that the canonical morphism $K\to \Cl(X)$, 
which associates to $D$ its class $[D]$, is an isomorphism.
\end{definition}

We observe that $R(X)$ is a $K$-graded algebra over $\cc$, 
i.e. it has a direct sum decomposition into complex vector spaces 
\[
R(X)_D:=H^0(X,\oo_X(D)),
\] 
where $D\in K$, such that
$ R(X)_{D_1}\cdot R(X)_{D_2}\subseteq R(X)_{D_1+D_2}.$
We will say that $f\in R(X)$ is {\em homogeneous} 
if it belongs to $R(X)_D$ for some $D\in K$ and in this case 
we define its {\em degree} to be ${\rm deg}(f) = [D]$.

A set of homogeneous generators $\{f_i: i \in I\}$ of $R (X)$ is a {\em minimal generating set} 
when each $f_i$ can't be expressed as a polynomial in the remaining elements $f_j$. 
Moreover, we say that $R (X)$ {\em has a generator in degree} $w\in \Cl (X)$ 
if each minimal generating set of $R (X)$ contains a nontrivial element of $R (X)_D$, where $[D]=w$.\\
In the following, given a divisor $A$, we denote by $R (X)_A$ the vector space $R (X)_D$ 
where $D\in K$ and $[D] = [A]$.

\begin{definition}
A variety $X$ as in Definition \ref{defcox} is called
 \emph{Mori dream space} if it has finitely generated Cox ring $R(X)$. 
\end{definition}

We denote by ${\rm Eff}(X)\subseteq \Cl(X)_{\qq}$ 
the {\em effective cone} of $X$, that is the cone generated by 
the classes of effective divisors. 
For the following result see \cite[Proposition 2.1]{A.H.L}.

\begin{proposition}\label{gen}
Let $f_i,\ i\in I$ be a set of homogeneous generators of $R(X)$. Then 
 \[
 {\rm Eff}(X)={\rm cone}(\deg(f_i): i\in I).
 \]
In particular, the effective cone of a Mori dream space is polyhedral. 
\end{proposition}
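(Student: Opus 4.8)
The plan is to establish the equality of the two cones by proving each inclusion separately, the whole argument resting on the identification of the support of the $\Cl(X)$-grading of $R(X)$ with the monoid of effective classes. Concretely, set $S := \{\,[D] : D\in K,\ R(X)_D\neq 0\,\}$. Since $R(X)_D = H^0(X,\oo_X(D))$ and $X$ is normal and projective, one has $R(X)_D\neq 0$ precisely when $D$ is linearly equivalent to an effective divisor; hence $S$ is exactly the set of classes of effective divisors, and by definition $\Eff(X) = \mathrm{cone}(S)$.

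For the inclusion $\mathrm{cone}(\deg(f_i):i\in I)\subseteq \Eff(X)$ I would argue that each generator $f_i$ is a nonzero homogeneous element, so if $f_i\in R(X)_{D_i}$ then $D_i$ is effective and $\deg(f_i)=[D_i]\in S\subseteq\Eff(X)$. As $\Eff(X)$ is a convex cone, it contains the cone generated by the classes $\deg(f_i)$.

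For the reverse inclusion it suffices to show $S\subseteq\mathrm{cone}(\deg(f_i):i\in I)$, since $\Eff(X)=\mathrm{cone}(S)$ and the right-hand cone already contains every element of $S$. I would pick $w\in S$ together with a nonzero $g\in R(X)_w$. Because the $f_i$ generate $R(X)$ as a $\cc$-algebra, $g$ is a polynomial in the $f_i$; collecting terms by degree and using that the degree is additive on products, some monomial $\prod_i f_i^{a_i}$ with $a_i\in\nn$ must occur with nonzero coefficient and have degree $w$. Thus $w=\sum_i a_i\deg(f_i)$ lies in the monoid, hence in the cone, generated by the $\deg(f_i)$. Together with the first inclusion this yields the desired equality.

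Finally, for the ``in particular'' statement: if $X$ is a Mori dream space then $R(X)$ is finitely generated, so the index set $I$ may be taken finite and $\mathrm{cone}(\deg(f_i):i\in I)$ is a finitely generated, hence polyhedral, cone; by the equality just proved, $\Eff(X)$ is polyhedral. The argument is elementary and presents no serious obstacle. The only points requiring care are the standard equivalence $R(X)_D\neq 0\iff [D]$ is effective, and the additivity of the grading under multiplication, which together force every degree appearing in $R(X)$ to be a nonnegative integer combination of the degrees of the generators.
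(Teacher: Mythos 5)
Your proof is correct and takes essentially the same route as the source: the paper does not prove this proposition itself but cites \cite[Proposition 2.1]{A.H.L}, and the argument there is exactly yours — the degree of any nonzero homogeneous element is an effective class, and conversely any nonzero section of degree $w$ is a polynomial in the generators, so by additivity of the grading some monomial $\prod_i f_i^{a_i}$ of degree $w$ occurs, giving $w=\sum_i a_i\deg(f_i)$. The only point to keep implicit (as the paper does) is that the generators are taken to be nonzero, so that their degrees are genuinely effective classes.
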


\begin{corollary}\label{gendegree}
If $D$ is an effective divisor such that $[D]$ is an element of the Hilbert basis of ${\rm Eff}(X)\subseteq \Cl(X)_{\qq}$, 
then the Cox ring $R(X)$ has a generator in degree $[D]$.
\end{corollary}

\begin{proof}
Let $\{f_i: i\in I\}$ be a minimal generating set of $R(X)$.
Since $[D]$ belongs to the Hilbert basis of the effective cone, then $[D]=\deg(f_i)$ for some $i\in I$
by Proposition \ref{gen}.
\end{proof}

For example, if $X$ is a smooth projective surface and $D$ is an integral effective divisor on $X$ with $D^2 < 0$, 
then $R(X)$ has a generator in degree $[D]$.

\subsection{Linear systems on K3 surfaces}
We now recall some classical results on linear systems on K3 surfaces.
We start with a result about the base locus.

\begin{proposition}\label{lsk3}
Let $X$ be a smooth projective K3 surface and $D$ be a non-zero 
effective divisor on $X$. Then
\begin{enumerate}
\item $|D|$ has no base points outside its fixed components;
\item if $D$ is nef, then $|D|$ is base point free unless there exist 
two curves $E,F$ and an integer $k\geq 2$ with 
\[
D\sim kF+E,\ F^2=0,\ E^2=-2,\ F\cdot E=1;
\]
\item if $D$ is nef, then $h^1(X,\Osh_X(D))=0$ unless $D\sim kF$
where $F$ is a primitive divisor with $F^2=0$ and $k\geq 2$.
\end{enumerate}
\end{proposition}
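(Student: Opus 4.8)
The plan is to prove the three assertions in the order (iii), (ii), (i), since the base-point analysis of (ii) relies on the cohomological vanishing in (iii), and (i) will follow from (ii) by passing to the movable part of $|D|$. Throughout I would use the two basic tools available on a K3 surface: Riemann--Roch in the form $\chi(\mathcal{O}_X(D)) = 2 + \tfrac{1}{2}D^2$, together with Serre duality $h^2(X,\mathcal{O}_X(D)) = h^0(X,\mathcal{O}_X(-D))$, which vanishes whenever $D$ is effective and non-zero. I would also invoke at the outset the standard fact that a primitive nef class $F$ with $F^2=0$ satisfies $h^0(X,\mathcal{O}_X(F))=2$ and that $|F|$ is a base-point-free pencil defining an elliptic fibration $X\to\pp^1$, so that $h^0(X,\mathcal{O}_X(kF))=k+1$ for every $k\geq 1$.

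For (iii), I would split according to the sign of $D^2$. If $D^2>0$ then $D$ is nef and big, so $h^1(X,\mathcal{O}_X(D)) = h^1(X,\mathcal{O}_X(K_X+D)) = 0$ by Kawamata--Viehweg (equivalently Ramanujam--Mumford) vanishing, using $K_X\sim 0$. If $D^2=0$ then, being nef and non-zero, $D$ is an integral multiple $D\sim kF$ of the primitive class $F$ on its ray, which satisfies $F^2=0$; Riemann--Roch together with the fibration computation gives $h^0(X,\mathcal{O}_X(kF))=k+1$ and $\chi=2$, hence $h^1(X,\mathcal{O}_X(kF))=k-1$. This vanishes exactly when $k=1$, which produces the stated exception for $k\geq 2$.

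For (ii), assume $D$ is nef and that $|D|$ has a base point; the goal is to force the exceptional decomposition. The case $D^2=0$ gives $D\sim kF$ with $|kF|$ base-point-free, so no base point occurs and $D^2>0$. Here I would apply Reider's theorem: since the intersection form on a K3 surface is even, the alternative $D\cdot E_0=0,\ E_0^2=-1$ cannot occur, so a base point produces an effective class $E_0$ through it with $D\cdot E_0=1$ and $E_0^2=0$. Analysing $E_0$ (its support splits into one curve meeting $D$ in degree one and a collection of $(-2)$-curves contracted by $D$) yields the class $F$ of an elliptic pencil with $D\cdot F=1$ and a $(-2)$-curve $E$ with $F\cdot E=1$, and subtracting the appropriate multiple of $F$ gives $D\sim kF+E$ with $k\geq 2$; a final Riemann--Roch count, using (iii), shows $h^0(X,\mathcal{O}_X(kF+E))=h^0(X,\mathcal{O}_X(kF))=k+1$, so $E$ is a fixed component and $\Bs|D|$ is contained in $E$. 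I expect the heart of the difficulty to lie precisely here: extracting the $(-2)$-curve $E$ with the exact numerical relations $F\cdot E=1$, $k\geq 2$ and the identity $D\sim kF+E$, and separately treating the low-degree cases $D^2\in\{2,4\}$, where Reider's hypothesis $D^2\geq 5$ fails and one must argue directly by restricting $D$ to the genus-one fibers of $|F|$.

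Finally, (i) follows from (ii). Writing $D=M+V$ with $V$ the fixed part of $|D|$ and $|M|$ free of fixed components, every base point of $|D|$ lying off $\Supp(V)$ is a base point of $|M|$. Since $|M|$ has no fixed component, $M\cdot C\geq 0$ for every irreducible curve $C$, so $M$ is nef; as $|M|$ has no fixed component it cannot be of the exceptional type $kF+E$, which has the fixed component $E$, and hence by (ii) the system $|M|$ is base-point-free. Therefore every base point of $|D|$ lies on $\Supp(V)$, as claimed.
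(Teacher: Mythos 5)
Your strategy is necessarily a different route from the paper's, since the paper offers no argument at all for this proposition: it simply cites \cite{SD} (Corollary 3.2 for (i), Section 2.7 for (ii)) and \cite{K.L} for (iii). Measured as a standalone proof, your parts (iii) and (i) are correct and essentially complete: for (iii), Ramanujam/Kawamata--Viehweg vanishing settles the nef-and-big case because $K_X\sim 0$, and the computation $h^1(X,kF)=h^0(X,kF)-\chi=(k+1)-2=k-1$ pins down the exceptional ray exactly; for (i), the reduction to the moving part $M$ of $|D|$, the observation that $M$ is nef, and the remark that $M$ cannot be of the exceptional type $kF+E$ (that type has $E$ as a fixed component) correctly deduce (i) from (ii).

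The genuine gap is in (ii), and you flagged it yourself. Two steps are missing. First, the passage from Reider's output --- an effective $E_0$ through the base point with $D\cdot E_0=1$, $E_0^2=0$ --- to the decomposition $D\sim kF+E$ is only asserted. It can be filled, and more cleanly than by your component-by-component analysis: since $\chi(E_0)=2$ and $h^2(X,E_0)=0$, the system $|E_0|$ moves, its moving part $M_0$ is nef and nonzero, and the Hodge index theorem (using $D^2>0$) forces $D\cdot M_0=1$ and $M_0^2=0$; primitivity then gives $M_0=F$ with $D\cdot F=1$. Setting $k:=\tfrac{1}{2}(D^2+2)$ and $E:=D-kF$ yields $E^2=-2$, $E\cdot F=1$, $D\cdot E=\tfrac{1}{2}(D^2-2)>0$, so $E$ is effective by Riemann--Roch and $D\sim kF+E$ with $k\geq 2$. (That $E$ may be taken to be an irreducible curve, as the statement's wording suggests, still needs an extra step.) Second, and more seriously, the cases $D^2\in\{2,4\}$, where Reider's hypothesis $D^2\geq 5$ fails, are left entirely untreated, and they are not vacuous: the exceptional divisors $2F+E$ and $3F+E$ have precisely $D^2=2$ and $D^2=4$, so the dichotomy must be established there by hand --- this is exactly where Saint-Donat's original low-degree analysis (or your suggested restriction to the fibers of $|F|$) is unavoidable, and nothing in your proposal carries it out. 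As written, your argument proves (ii) only for $D^2=0$ and $D^2\geq 6$.
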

\begin{proof}
(i) see \cite[Corollary 3.2]{SD}, (ii) follows from \cite[Section 2.7]{SD} and (iii) see \cite{K.L}.
\end{proof}

\begin{corollary}\label{section}
Let $X$ be a K3 surface such that none of its elliptic fibrations has a section.
Then any nef divisor on $X$ is base point free.
\end{corollary}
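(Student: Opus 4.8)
The plan is to argue by contradiction, extracting an elliptic fibration and a section from the exceptional case of Proposition \ref{lsk3}(ii). Suppose $D$ is a nef divisor on $X$ for which $|D|$ is \emph{not} base point free. By Proposition \ref{lsk3}(ii) there are then curves $E,F$ and an integer $k\geq 2$ with
\[
D\sim kF+E,\qquad F^2=0,\qquad E^2=-2,\qquad F\cdot E=1,
\]
where, as in Saint-Donat's result, $E$ and $F$ are irreducible. The aim is to recognize $F$ as the class of a fiber of an elliptic fibration and $E$ as a section, which contradicts the hypothesis and therefore shows that the exceptional alternative never occurs, i.e. that every nef divisor is base point free.

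First I would analyze $F$. Since $F$ is irreducible with $F^2=0$, adjunction on the K3 surface gives $2p_a(F)-2=F^2=0$, so $p_a(F)=1$. Being irreducible, $F$ has nonnegative intersection with every other prime divisor, and $F\cdot F=0$, so $F$ is nef. The relation $F\cdot E=1$ forces $F$ to be primitive in $\Cl(X)$: if $F=mf$ with $m\geq 2$ then $m$ would divide $F\cdot E=1$. In particular $F$ is not of the exceptional type appearing in Proposition \ref{lsk3}(ii), since an irreducible curve admits no decomposition $kF'+E'$ with $k\geq 2$ and $E'$ effective with $E'^2=-2$.

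Next I would build the fibration. As $F$ is nef with $F^2=0$ and avoids the exceptional case, Proposition \ref{lsk3}(ii) shows that $|F|$ is base point free, while Proposition \ref{lsk3}(iii) and Riemann--Roch give $h^0(X,\oo_X(F))=2$. Hence $\phi_{|F|}\colon X\to\pp^1$ is a morphism whose general fiber lies in $|F|$ and is a smooth curve of genus one, so $F$ is the fiber class of an elliptic fibration of $X$. Finally, $E$ is an irreducible curve with $E^2=-2$, hence a smooth rational curve, and $E\cdot F=1$ means that $E$ meets the general fiber in a single point; thus $E$ is a section of $\phi_{|F|}$, contradicting the assumption that no elliptic fibration of $X$ has a section.

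I expect the only delicate point to be the justification that the curves $E,F$ produced by Proposition \ref{lsk3}(ii) are genuinely irreducible and that $F$ induces a fibration over $\pp^1$ rather than a proper multiple of a fiber class; both are handled by adjunction together with the primitivity of $F$ forced by $F\cdot E=1$, after which the identification of $E$ as a section is immediate.
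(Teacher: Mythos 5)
Your proof is correct and is exactly the argument the paper leaves implicit: Corollary \ref{section} is stated without proof as an immediate consequence of Proposition \ref{lsk3}(ii), the point being precisely that in the exceptional case the curve $F$ gives an elliptic fibration and $E$, with $E^2=-2$ and $E\cdot F=1$, is a section of it. Your write-up just supplies the routine verifications (irreducibility, primitivity of $F$, base point freeness of $|F|$, and $h^0(F)=2$) that the paper takes for granted.
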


We recall that a non-empty linear system $|D|$ is called {\em hyperelliptic} if 
its generic member is a hyperelliptic curve. 
The following result follows from \cite[Proposition 2.6]{SD}.

\begin{proposition}\label{hyp}
 A nef divisor $D$ on a K3 surface with $D^2>0$ is hyperelliptic  if and only either $D^2=2$,
 or there is a smooth elliptic curve $E$ such that  $D\cdot E =2$, or $D\sim 2B$ 
 for a smooth curve $B$ with $B^2=2$.
\end{proposition}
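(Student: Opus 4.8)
The plan is to reduce the statement to the geometry of a general smooth member $C\in|D|$ and of the morphism $\phi_{|D|}$. First I would dispose of base points: since $D$ is nef with $D^2>0$, Proposition~\ref{lsk3}(ii) shows that either $|D|$ is base point free, or $D\sim kF+E$ with $F^2=0$, $E^2=-2$, $F\cdot E=1$ and $k\ge 2$; in the latter case a dimension count gives $h^0(X,\oo_X(kF))=k+1=h^0(X,\oo_X(D))$, so $E$ is a fixed component and the general member of $|D|$ is reducible, hence not hyperelliptic. Thus I may assume $|D|$ base point free. Then Proposition~\ref{lsk3}(iii) and Riemann--Roch give $h^0(X,\oo_X(D))=\tfrac12 D^2+2=g+1$, where $g=\tfrac12 D^2+1$ is the genus of $C$, so that $\phi_{|D|}\colon X\to\pp^{g}$. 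From the sequence $0\to\oo_X\to\oo_X(D)\to\oo_C(D)\to 0$ and $H^1(X,\oo_X)=0$ the restriction $H^0(X,\oo_X(D))\to H^0(C,\oo_C(D))$ is surjective, while adjunction identifies $\oo_C(D)$ with $K_C$. Hence $\phi_{|D|}|_C$ is exactly the canonical map of $C$, and $|D|$ is hyperelliptic if and only if the general $C$ is a hyperelliptic curve.

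Given this reduction, the forward (``if'') implications are short. If $D^2=2$ then $g=2$ and every smooth $C\in|D|$ is automatically hyperelliptic. If there is a smooth elliptic curve $E$ with $D\cdot E=2$, then restricting the elliptic pencil $|E|$ to $C$ cuts out a base point free $g^1_2$ (the restriction $H^0(X,\oo_X(E))\to H^0(C,\oo_C(E))$ is injective, as $(E-D)\cdot D=2-D^2<0$ forces $E-D$ non-effective), so $C$ is hyperelliptic. If $D\sim 2B$ with $B^2=2$, then $\phi_{|B|}$ realizes $X$ as a double cover of $\pp^2$ and $\phi_{|D|}$ factors as $v_2\circ\phi_{|B|}$ with $v_2$ the Veronese embedding; thus $\phi_{|D|}$ is two-to-one and $C$ is hyperelliptic.

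The substance is the converse. Assuming the general $C$ hyperelliptic with $g\ge 3$, its canonical map is two-to-one onto a rational normal curve of degree $g-1$; since $\phi_{|D|}|_C$ is this map and $C$ is a hyperplane section, $\phi_{|D|}$ is generically two-to-one onto a nondegenerate surface $S\subset\pp^{g}$ of degree $\tfrac12 D^2=g-1$, that is, of minimal degree. I would then invoke the classification of minimal degree surfaces: $S$ is either the Veronese surface $v_2(\pp^2)\subset\pp^5$ (forcing $g=5$, $D^2=8$) or a rational normal scroll, possibly a cone. In the Veronese case the underlying two-to-one map to $\pp^2$ is given by a class $B$ with $D\sim 2B$ and $B^2=\tfrac14 D^2=2$, which is the third case. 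In the scroll case I would pull back the ruling: for a line $\ell$ of the ruling the class $E=\phi_{|D|}^{*}(\ell)$ satisfies $E^2=2\,\ell^2=0$ and $D\cdot E=2\,(\ell\cdot H)=2$, and its primitive part defines an elliptic pencil whose general fiber is a smooth elliptic curve meeting $D$ in degree $2$, which is the second case.

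The main obstacle I anticipate is precisely this converse, and within it two delicate points: first, correctly invoking and applying the classification of surfaces of minimal degree, including the degenerate cone case, and translating the pullback of the ruling (respectively the Veronese structure) into an honest divisor class relation on $X$ carrying the asserted numerical invariants $E^2=0$, $D\cdot E=2$ (respectively $D\sim 2B$, $B^2=2$); and second, confirming that no birational possibility survives, i.e.\ that hyperellipticity of $C$ genuinely forces $\deg\phi_{|D|}=2$. All of this is carried out in \cite[Proposition~2.6]{SD}, from which the stated result follows.
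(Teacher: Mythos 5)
Your proposal is correct and takes essentially the same route as the paper: the paper's entire proof of this proposition is the citation to \cite[Proposition 2.6]{SD}, which is exactly what you invoke at the end, and your sketch (passing to the canonical map of a general smooth member, forcing the image of $\phi_{|D|}$ to be a surface of minimal degree, then splitting into the scroll and Veronese cases) is a faithful outline of what Saint-Donat's argument actually does. The one caveat, which affects the paper's formulation as much as yours, is the fixed-component case $D\sim kF+E$ of Proposition \ref{lsk3}(ii): there the general member of $|D|$ is reducible, yet for $k=2$ one has $D^2=2$, so the ``if'' direction only makes sense once this degenerate case is excluded (as it is in Saint-Donat's hypotheses), and your dismissal of it as ``not hyperelliptic'' silently passes over this tension in the stated biconditional.
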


 \begin{proposition}\label{ray} 
$($\cite[Proposition 3.4]{A.H.L}$)$
Let $X$ be a projective K3 surface and $D$ be an effective divisor on $X$.
\begin{itemize}
\item[i)] If $D$ is not hyperelliptic, then the ray $R_{[D]}:=\bigoplus_{n\in \nn} H^0(X,\Osh_X(nD))$ is generated in degree one.
\item [ii)]If $D^2=2$, then $R_{[D]}$ is generated in degrees one and three.
\item [iii)]If $D$ is hyperelliptic and $D^2>2$, then $R_{[D]}$ is generated in degrees one and two.
\end{itemize}
\end{proposition}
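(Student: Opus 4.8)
The plan is to reduce everything to the case where $D$ is nef with $D^2\ge 0$. The ray $R_{[D]}$ is insensitive to fixed components, so we may replace $D$ by the nef part of its Zariski decomposition; in particular, if $D^2<0$ the class is rigid, $h^0(X,\Osh_X(nD))=1$ for all $n$, and $R_{[D]}\cong\cc[t]$ is trivially generated in degree one. Since hyperellipticity is a property of nef classes with $D^2>0$ (Proposition \ref{hyp}), the three cases partition the nef divisors into: non-hyperelliptic ones, including all with $D^2=0$ and the case handled in (i); hyperelliptic with $D^2=2$, handled in (ii); and hyperelliptic with $D^2>2$, handled in (iii). In each case the content of the statement is the surjectivity of the multiplication maps $H^0(X,\Osh_X(D))\otimes H^0(X,\Osh_X(nD))\to H^0(X,\Osh_X((n+1)D))$ for all $n\ge 1$, together with the identification of the finitely many extra degrees where these can fail.

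For (i) with $D^2>0$, I would invoke Saint-Donat's projective normality theorem: when $D$ is nef, big and not hyperelliptic, $\varphi_{|D|}$ realizes $X$ as a projectively normal surface, which is precisely the assertion that $\bigoplus_n H^0(X,\Osh_X(nD))$ is generated in degree one. The remaining non-hyperelliptic case $D^2=0$ I would treat directly: a primitive nef class with $D^2=0$ is the fiber class of an elliptic fibration $X\to\pp^1$, and $H^0(X,\Osh_X(nD))$ is the pullback of $H^0(\pp^1,\Osh(n))$, so $R_{[D]}$ is the image of a polynomial ring on two degree-one variables and is generated in degree one.

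For the hyperelliptic cases I would exploit the structure of $\varphi_{|D|}$, which is a degree-two morphism onto a surface $W\subset\pp^N$ of minimal degree, with $W=\pp^2$ exactly when $D^2=2$ and a rational normal scroll (or the Veronese surface) when $D^2>2$. Letting $\iota$ be the associated involution, the ring $R_{[D]}$ splits into $\iota$-invariant and anti-invariant parts. The invariant part is the homogeneous coordinate ring of $W$, which is projectively normal and hence generated in degree one, while the anti-invariant part is generated over it by a single section $w$ with $w^2=b$ for an explicit $b$ in the invariant subring coming from the branch divisor. The one remaining point is to locate the degree of $w$, namely the least $n$ for which the anti-invariant subspace of $H^0(X,\Osh_X(nD))$ is nonzero. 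A direct dimension count comparing $h^0(X,\Osh_X(nD))$, computed by Riemann--Roch on $X$, with $h^0(W,\Osh_W(n))$ gives anti-invariant dimension $(n-1)(n-2)/2$ when $W=\pp^2$, first positive at $n=3$ (yielding (ii)), and, for a scroll with $D^2>2$, a value first positive at $n=2$ (yielding (iii)).

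The main obstacle is establishing the projective normality of the image surface and, in the hyperelliptic case, proving that the anti-invariant module is generated by the single element $w$ rather than by elements in several degrees; both rest on Saint-Donat's fine analysis of the equations cutting out $W$ and of the restriction maps $H^0(X,\Osh_X(nD))\to H^0(C,\Osh_C(nD|_C))$ for a general $C\in|D|$. These I would control inductively through the exact sequence $0\to\Osh_X((n-1)D)\to\Osh_X(nD)\to\Osh_C(nD|_C)\to 0$ and the vanishing of $H^1(X,\Osh_X((n-1)D))$ supplied by Proposition \ref{lsk3}(iii), reducing the surjectivity statements on $X$ to the corresponding ones for the hyperelliptic or canonical ring of the curve $C$.
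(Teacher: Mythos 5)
The paper does not actually prove this proposition; it quotes it from \cite{A.H.L}, and the argument there (going back to Saint-Donat \cite{SD}) is essentially the one you outline \emph{in the case where $|D|$ has no fixed components}: part (i) with $D^2>0$ is Saint-Donat's projective normality theorem, parts (ii) and (iii) come from splitting $R_{[D]}$ into invariant and anti-invariant parts under the double cover $\varphi_{|D|}\colon X\to W$ with $W=\pp^2$ or a surface of minimal degree, and your dimension counts for the first anti-invariant section ($(n-1)(n-2)/2$ for $\pp^2$, so degree three; $g-2$ in degree two for a scroll or the Veronese) are correct. So on that part of the statement your sketch is the standard proof.

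The genuine gap is your opening reduction, i.e.\ the treatment of fixed components, and it cannot be repaired as written. First, ``if $D^2<0$ the class is rigid'' is false: for $D=F+C$ with $F$ an elliptic fiber, $C$ a $(-2)$-curve and $F\cdot C=0$, one has $D^2=-2$ but $h^0(X,nD)=n+1$. Second, $R_{[D]}$ is \emph{not} insensitive to fixed components, because the fixed part of $|nD|$ need not be $n$ times that of $|D|$ (equivalently, the Zariski decomposition need not be integral): for $D=F+C$ with $F\cdot C=1$ the fixed part of $|nD|$ is $\lceil n/2\rceil C$, and $h^0(X,4D)=6$ while the image of ${\rm Sym}^4H^0(X,D)$ has dimension $5$, so the ray is not generated in degree one although its ``nef part'' would suggest otherwise. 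Third, and most seriously, even for $D$ nef your appeal to Saint-Donat needs $|D|$ without fixed components, which by Proposition \ref{lsk3}(ii) fails exactly when $D\sim kF+E$ with $k\geq 2$, $F^2=0$, $E^2=-2$, $F\cdot E=1$; and for such monogonal divisors the stated conclusion itself is false. Indeed for $D=2F+E$ one has $D^2=2$ and $|D|=|2F|+E$, and writing the elliptic fibration in Weierstrass form shows that $R_{[D]}$ has an essential generator in degree two (the Weierstrass coordinate $x$), so it is not generated in degrees one and three. Thus the proposition tacitly assumes that $|D|$ has no fixed part --- which is the only situation in which this paper uses it: by Corollary \ref{section} nef implies base point free on all families except $S_{1,1,1}$, and that family is excluded in the proof of Theorem \ref{main} --- and any complete proof must either add this hypothesis or treat the case $kF+E$ separately; your Zariski-decomposition step silently assumes it away.
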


\subsection{Cox rings of K3 surfaces}

The following theorem \cite[Theorem 5.1.5.1]{A.D.H.L} characterizes Mori dream K3 surfaces.

\begin{theorem}\label{Mori dream}
Let $X$ be an algebraic K3 surface. Then the following statements are equivalent.
\begin{enumerate}
\item $X$ is a Mori dream surface.
\item The effective cone $\Eff(X)\subseteq \Cl_{\qq}(X)$ is polyhedral.
\item The automorphism group of $X$ is finite.
\end{enumerate}
\end{theorem}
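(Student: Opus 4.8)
The plan is to prove the cyclic chain of implications $(1)\Rightarrow(2)\Rightarrow(3)\Rightarrow(1)$. The step $(1)\Rightarrow(2)$ requires essentially no new work: by Proposition \ref{gen}, a finite set of homogeneous generators $f_i$ of $R(X)$ satisfies $\Eff(X)=\operatorname{cone}(\deg(f_i):i\in I)$, so the effective cone is the cone over a finite set of classes and is therefore polyhedral.

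For $(2)\Rightarrow(3)$ I would use that $\Aut(X)$ acts on $\Pic(X)$ by isometries of the intersection form preserving $\Eff(X)$. If $\Eff(X)$ is rational polyhedral it has only finitely many extremal rays, each automorphism permutes them, and this gives a homomorphism from $\Aut(X)$ into a finite symmetric group. It then suffices to bound its kernel, the subgroup $G$ acting trivially on $\Pic(X)$. By the global Torelli theorem an element of $G$ is determined by its action on the transcendental lattice $T(X)$, which must preserve the Hodge structure and hence lies in a finite cyclic group; thus $G$ is finite and so is $\Aut(X)$.

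The reverse step $(3)\Rightarrow(2)$ is where the deepest external input enters. I would invoke Sterk's theorem, according to which the action of $\Aut(X)$ on the nef cone (intersected with the positive cone) admits a rational polyhedral fundamental domain $\Pi$. When $\Aut(X)$ is finite, $\Nef(X)$ is the union of the finitely many translates $g\,\Pi$, $g\in\Aut(X)$; being also convex, such a finite union of rational polyhedral cones is itself a rational polyhedral cone, and passing to the dual cone shows that $\Eff(X)$ is rational polyhedral.

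Finally, $(2)\Rightarrow(1)$ is the technical heart and the step I expect to be the main obstacle: one must upgrade the polyhedrality of $\Eff(X)$ to finite generation of $R(X)$. The idea is to bound the degrees in which a minimal generating set can have a generator. Along each ray the subring $R_{[D]}$ is finitely generated by Proposition \ref{ray}, so the difficulty is to generate the mixed graded pieces. Here I would use the base-point-freeness and vanishing statements of Proposition \ref{lsk3} to obtain surjectivity of the multiplication maps $H^0(X,\oo_X(D_1))\otimes H^0(X,\oo_X(D_2))\to H^0(X,\oo_X(D_1+D_2))$ outside a finite list of exceptional configurations, so that once $\deg(f)$ leaves a fixed bounded region it factors through products of sections of strictly smaller degree. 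Controlling these multiplication maps, using the description of the extremal rays of $\Eff(X)$ as classes of $(-2)$-curves or of fibers of elliptic fibrations, is exactly the delicate point, and it is the phenomenon that the general theorem of this paper later refines.
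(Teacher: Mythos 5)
First, a point of calibration: the paper does not prove Theorem \ref{Mori dream} at all --- it quotes it from \cite[Theorem 5.1.5.1]{A.D.H.L}, the result being due originally to \cite{A.H.L} and \cite{JM}. So your attempt must be measured against the proof in those references. Your first three steps are correct in outline and agree with that proof: $(1)\Rightarrow(2)$ is immediate from the fact that the degrees of any set of homogeneous generators of $R(X)$ span $\Eff(X)$; $(2)\Rightarrow(3)$ via the permutation action on the finitely many extremal rays (whose kernel acts trivially on $\Pic(X)$, since the primitive generators of the rays span $\Cl(X)_\qq$, and is finite by the global Torelli theorem together with finiteness of the group of Hodge isometries of the transcendental lattice); $(3)\Rightarrow(2)$ via Sterk's rational polyhedral fundamental domain. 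One gloss should be made explicit in the last step: dualizing $\Nef(X)$ only yields that the \emph{closure} of $\Eff(X)$ is rational polyhedral; one then needs Riemann--Roch on a K3 (a primitive integral class $D$ with $D^2\geq -2$ and positive degree against an ample class is effective) to see that each extremal ray is spanned by an effective class, so that $\Eff(X)$ itself is polyhedral.

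The genuine gap is $(2)\Rightarrow(1)$, which is the substantive content of the theorem and which your proposal leaves as an announced strategy rather than a proof: no bounded region of degrees is produced, no precise surjectivity statement for multiplication maps is formulated or proved, and the exceptional configurations (hyperelliptic classes, multiples of elliptic fibers, classes negative on $(-2)$-curves requiring Zariski decomposition) are not handled. Carrying out that program is essentially the content of Section \ref{techniques} and Theorem \ref{gen} of the present paper, and even there it yields degree bounds only after the Koszul-sequence and Saint-Donat analysis; invoking it as a one-line expectation is not a proof. The way the cited references actually close this implication is different and much shorter: by Hu and Keel's criterion, a smooth projective surface with $\Pic(X)_\qq=N^1(X)_\qq$ has finitely generated Cox ring as soon as $\Nef(X)$ is rational polyhedral and generated by semiample classes. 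If $\Eff(X)$ is polyhedral, then $\Nef(X)$, being its dual, is rational polyhedral, and every nef divisor on a K3 surface is semiample: for $D^2>0$ this is the base-point-free theorem (since $K_X\sim 0$, $D-K_X=D$ is nef and big), and for $D^2=0$ a primitive nef isotropic class defines an elliptic fibration by \cite{SD}. Replacing your multiplication-map sketch by this argument would make the proof complete; as written, it is not.
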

Moreover, if the Picard number is at least three, 
then $(i)$ is equivalent to the property that $X$ contains only finitely many smooth rational curves. 
In this case, these curves are $(-2)$-curves 
and their classes generate the effective cone (see \cite[Remark 7.2]{Kovacs}).
This result  allows \cite[Theorem 5.1.5.3]{A.D.H.L} 
to classify Mori dream K3 surfaces using the classification 
of K3 surfaces with finite automorphism group (see, \cite{VN1,VN,VN2,P.S,EV1}).

\begin{theorem}
\label{classification}
Let $X$ be an algebraic K3 surface with 
Picard number $\rho(X)$. Then $X$ is 
Mori dream if and only if one of the 
following occurs:
\begin{enumerate}
\leftskip -5mm
\item 
$\rho(X)=2$ and $\Cl(X)$ contains a class
$w$ with $w^2\in\{-2,0\}$.
\item 
$\rho(X)=3$ and $\Cl(X)$
is isometric to one of the following $26$
lattices:
\begin{align*}
S'_{4,1,2}&=\langle 2e_1+e_3,e_2,2e_3\rangle,\\
S_{k,1,1}&=\langle ke_1,e_2,e_3\rangle,\ k\in\{4,5,6,7,8,10,12\},\\ 
S_{1,k,1}&=\langle e_1,ke_2,e_3\rangle,\ k\in\{2,3,4,5,6,9\},\\
S_{1,1,k}&=\langle e_1,e_2,ke_3\rangle,\ k\in\{1,2,3,4,6,8\},
\end{align*}
where the intersection matrix of $e_1,e_2,e_3$ is
$
 \left[
 \begin{array}{rrr}
  -2 & 0 & 1\\
   0 & -2 & 2\\
  1 & 2 & -2
 \end{array}
 \right]
$,
and
$$
S_1=(6)\oplus 2A_1,
\qquad
S_2=(36)\oplus A_2,\qquad S_{3}=(12)\oplus A_2, 
$$
$$
S_4=\left[
\begin{array}{rrr}
6&0&-1\\
0&-2&1\\
-1&1&-2
\end{array}
\right],  \qquad S_{5}=(4)\oplus A_2,
\qquad
S_6=\left[
\begin{array}{rrr}
14&0&-1
\\
0&-2&1
\\
-1&1&-2
\end{array}
\right].
$$

\item 
$\rho(X)=4$ and $\Cl(X)$ is 
isometric to one of the $14$
lattices in \cite{EV1}.

\item 
$5\leq \rho(X) \leq 19$ and
$\Cl(X)$ is isometric to a list of 78
lattices $($see the list in \cite[Theorem 5.1.5.3 (iv)]{A.D.H.L}$)$.
\end{enumerate}
\end{theorem}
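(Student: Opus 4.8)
The plan is to reduce the statement to the classification of K3 surfaces with finite automorphism group, which Theorem~\ref{Mori dream} makes equivalent to the Mori dream property, and then to invoke the lattice-theoretic description of the latter. Since being Mori dream, the polyhedrality of $\Eff(X)$ and the finiteness of $\Aut(X)$ all depend only on the isometry class of the Picard lattice $\Cl(X)$ together with the choice of positive cone, the task becomes the following: for each Picard number $\rho(X)$, determine exactly which even hyperbolic lattices of signature $(1,\rho-1)$ arise as $\Cl(X)$ of a K3 surface with finite automorphism group. I first note that for $\rho(X)=1$ the cone $\Eff(X)$ is a single ray, so such surfaces are trivially Mori dream; the substantive content is therefore for $\rho(X)\geq 2$.

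The next step is to translate finiteness of $\Aut(X)$ into a property of the lattice. By the Torelli theorem together with the description of the nef cone as a fundamental domain for the Weyl group $W$ generated by reflections in the $(-2)$-classes, $\Aut(X)$ is finite precisely when $W$ has finite index in the orthogonal group of $\Cl(X)$, equivalently when the nef cone, and hence by the remark following Theorem~\ref{Mori dream} the effective cone, is rational polyhedral. In the rank two case this is controlled directly by whether the lattice represents $0$ or $-2$: an isotropic class yields an elliptic fibration and a $(-2)$-class yields a smooth rational curve, and in signature $(1,1)$ the presence of either on the boundary of the two dimensional cone forces both of its rays to be rational, which gives case (i). For $\rho(X)\geq 3$ the relevant objects are the $2$-reflective hyperbolic lattices, and I would appeal to their classification: the finite lists in cases (ii)--(iv) are exactly the lattices produced by the work of Nikulin and Vinberg and, for $\rho(X)=4$, by \cite{EV1} (see \cite{VN1,VN,VN2,P.S}).

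It then remains to read off the explicit presentations and to check realizability. For each abstract lattice in the classification one verifies, using surjectivity of the period map and Nikulin's criteria for primitive embeddings into the K3 lattice of signature $(3,19)$, that it genuinely occurs as $\Cl(X)$ for some algebraic K3 surface, and exhibits a Gram matrix or an explicit generating set, as displayed for the $26$ lattices of rank three.

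The main obstacle is not the reduction, which is immediate from Theorem~\ref{Mori dream}, but the underlying classification of $2$-reflective hyperbolic lattices. Proving that there are only finitely many such lattices in each rank, that none occur in signature $(1,\rho-1)$ with $\rho\geq 21$, and producing the complete explicit lists requires Vinberg's algorithm for the fundamental polytope of a hyperbolic reflection group together with a delicate case analysis; this is the content of the cited works, so the argument here amounts to assembling and tabulating those classifications rather than reproving them.
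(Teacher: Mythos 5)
Your proposal is correct and follows essentially the same route as the paper: the paper does not prove Theorem \ref{classification} itself but, exactly as you do, derives it from Theorem \ref{Mori dream} (Mori dream $\Leftrightarrow$ polyhedral effective cone $\Leftrightarrow$ finite automorphism group) combined with the cited classification of K3 surfaces with finite automorphism group \cite{VN1,VN,VN2,P.S,EV1}, as assembled in \cite[Theorem 5.1.5.3]{A.D.H.L}. Both you and the paper treat the underlying classification of $2$-reflective hyperbolic lattices as imported input rather than something to be reproved.
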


In \cite{A.H.L} the authors  determined the Cox ring of all K3 surfaces 
with a non-symplectic involution when $2\leq \rho(X) \leq 5$. 
Moreover, they compute the Cox ring of K3 surfaces that are general double covers of  del Pezzo surfaces.
 In \cite{JCO} the author gives a new proof of the finite generation 
 of the Cox ring when the effective cone is rational polyhedral 
 and develops a technique that allows to calculate the degrees 
 of generators and  relations for the Cox rings of several examples 
 for $\rho(X)=2$, among which:  quartic surfaces that contain a line,  
 quartic surfaces that contain two plane conics and double covers of 
 the Hirzebruch surface $\ff_4$.

\section{Computing a generating set of the Cox ring}
\label{techniques}

\subsection{Koszul type sequences}\label{techniques koszul}
In this section we will present some techniques which allow to show 
that the Cox ring of a projective variety has no generators in a certain degree.	
We will use the standard   notations 
$H^i(X, D) = H^i (X, \oo_X(D))$ and $h^i(X, D) = h^i(X, \oo_X (D))$. 

 \begin{theorem}\label{koszul3}
Let $X$ be a smooth projective variety over $\cc$, 
$E_1,E_2,\dots, E_n$ be effective divisors of $X$
such that $\bigcap_{i=1}^nE_i=\emptyset$ 
and $s_{i}\in H^0(X, E_i)$ be a defining section for $E_i$, $i=1,\dots,n$. 
Let
\[
\mathbb K_0(D):=\Osh_X,\ \mathbb 
K_i(D):=\bigoplus_{1\leq j_1<\dots<j_i\leq n}\Osh_X(-E_{j_1}-\cdots-E_{j_i}),\ i=1,\dots,n.
\]
Then there is an exact sequence of sheaves:
\begin{equation}\label{koszul}
\xymatrix{
 0  \ar[r] &  \mathbb K_n(D)\ar[r]^{d_n} & \mathbb K_{n-1}(D) 
 \ar[r]^{d_{n-1}} & \cdots \ar[r]& \mathbb K_1(D)\ar[r]^{d_1} & \mathbb K_0(D)\ar[r] & 0,
 }
\end{equation}
where $d_1(u_j)=s_ju_0$ para $j=1,\dots,n$ and 
\[
d_i(u_{j_1\cdots j_i})=\sum_{r=1}^i (-1)^{r+1}s_{i_r}u_{j_1\cdots j_{r-1}\hat{j_r}j_{r+1}\cdots j_i},\ i=2,\dots,n,
\] 
where $u_{j_1\cdots j_i}$ is a generator of $\Osh_X(-E_{j_1}-\cdots-E_{j_i})$ as $\Osh_X$-module.
\end{theorem}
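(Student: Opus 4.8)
The plan is to construct the Koszul complex associated to the sections $s_1,\dots,s_n$. The statement being a classical construction, the natural approach is to exhibit the maps $d_i$ explicitly as the author begins to do, verify that the resulting sequence is a complex (that is, $d_{i-1}\circ d_i=0$), and then prove exactness. I would first fix notation: for each subset $J=\{j_1<\dots<j_i\}$ let $u_J$ denote the distinguished local generator of $\Osh_X(-E_{j_1}-\cdots-E_{j_i})$, so that the summands of $\mathbb K_i(D)$ are indexed by the $i$-element subsets of $\{1,\dots,n\}$. Multiplication by the defining section $s_r$ is a sheaf map $\Osh_X(-E_{j_1}-\cdots-E_{j_i})\to\Osh_X(-E_{j_1}-\cdots\hat{E_r}\cdots-E_{j_i})$, which is exactly what the formula for $d_i$ uses.

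The computation $d_{i-1}\circ d_i=0$ is the standard sign-cancellation argument: applying $d_i$ and then $d_{i-1}$ produces, for each pair of indices removed from $J$, two contributions that differ only by the order of removal and carry opposite signs, so they cancel in pairs. This is a purely formal verification identical to the one for the ordinary Koszul complex on a commutative ring, the role of ring elements being played by the sections $s_r$; I would check it once for a general term and record that the alternating signs $(-1)^{r+1}$ are chosen precisely to make this work.

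The real content is exactness, and here the hypothesis $\bigcap_{i=1}^n E_i=\emptyset$ is essential. Exactness is a local question, so I would pass to stalks at an arbitrary point $x\in X$ and work over the regular local ring $\Osh_{X,x}$. Since $x$ lies outside the common intersection of the $E_i$, at least one defining section $s_r$ is a unit in $\Osh_{X,x}$. I would then invoke the classical fact that the Koszul complex on a sequence of elements of a commutative ring is exact (indeed a resolution of the quotient) as soon as one of the elements is a unit — equivalently, the Koszul complex of any sequence containing a unit is contractible, via an explicit homotopy built from the inverse of that unit. Localizing and twisting so that each $\Osh_X(-E_{j_1}-\cdots-E_{j_i})$ becomes a free rank-one module, the complex \eqref{koszul} is identified stalk-wise with precisely this algebraic Koszul complex, and the presence of the unit $s_r$ forces all homology to vanish. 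Since $x$ was arbitrary, the sequence of sheaves is exact.

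The step I expect to be the main obstacle is the last one, namely organizing the local identification cleanly: one must trivialize the line bundles $\Osh_X(-E_{j_1}-\cdots-E_{j_i})$ compatibly near $x$ so that the distinguished generators $u_J$ correspond to the standard basis of the algebraic Koszul complex and the maps genuinely become multiplication by the $s_r$ viewed as ring elements. Once this bookkeeping is set up, invoking the homotopy provided by the unit $s_r$ finishes the proof; the sign verification and the construction of the maps are routine by comparison.
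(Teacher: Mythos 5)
Your proposal is correct, and its overall architecture coincides with the paper's: both reduce exactness to a stalk-wise statement at an arbitrary point $x\in X$, observe that the hypothesis $\bigcap_{i=1}^n E_i=\emptyset$ forces some $s_r$ to be a unit in $\Osh_{X,x}$, and then identify the localized sequence with the algebraic Koszul complex of the elements $s_1,\dots,s_n$ over the local ring. The difference lies in the final ingredient, namely why that algebraic Koszul complex is acyclic when one element is a unit. The paper argues by induction on $n$: it quotes the long exact sequence relating the Koszul homology of $(s_1,\dots,s_{n-1})$ and $(s_1,\dots,s_n)$ (Lang, Algebra, Ch.\ XXI, Theorem 4.5), sandwiches each $H_pK(s_1,\dots,s_n)$, $p>0$, between groups that vanish by the inductive hypothesis, and handles $H_0\cong R/(s_1,\dots,s_n)=0$ directly. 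You instead invoke the contracting homotopy built from $s_r^{-1}$ (equivalently, the fact that the ideal $(s_1,\dots,s_n)$ annihilates Koszul homology, so a unit in the ideal kills all of it). Both are standard and complete; your route is more self-contained and avoids the induction and the external long exact sequence, while the paper's is shorter on the page because it outsources the homological algebra to a textbook reference. You also explicitly verify $d_{i-1}\circ d_i=0$, which the paper gets for free by identifying the sequence with Lang's Koszul complex; and the ``bookkeeping'' step you flag as the main obstacle (trivializing the line bundles near $x$ so that the $u_J$ become the standard basis and the $d_i$ become multiplication by ring elements) is exactly the identification the paper performs, stated there in one line.
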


\begin{proof}
It is enough to prove exactness at any local ring $R=\Osh_x$, $x\in X$. 
To simplify notation, we will still denote by $s_i$ its image in any local ring.
Given $x\in X$ we can assume that the image of $s_1$  in $\Osh_x$ is a unit 
since $\cap_{i=1}^n E_i=\emptyset$.
Let $E=R^n$ and let $\varphi_1:E\to R,\ \varphi(e_i)=s_i$. 
The sequence (\ref{koszul}) is the Koszul complex $K_{\cdot}(\varphi)$ \cite[Chapter XXI, p. 852]{SL}:
\[
\xymatrix{
0\ar[r] & \wedge^nE\ar[r]^{\varphi_n} & \cdots \ar[r]^{\varphi_3}& \wedge^2E\ar[r]^{\varphi_2} &E\ar[r]^{\varphi_1} & R\ar[r] & 0.
}
\]
We will denote by $H_pK(s_1,\dots,s_{n})$ the $p$-th homology group of the complex.
Observe that the group $H_0K(s_1,\dots,s_{n})$ is trivial since it is isomorphic to $R/(s_1,\dots,s_n)$ and $s_1$ is a unit.

We now prove that all homology groups with $p>0$ vanish by induction on $n$.
If $n=1$ the sequence is exact since $\varphi_1$ is the multiplication by $s_1$, which is a unit.
Now we assume exactness for $n-1$. By \cite[Theorem 4.5 a), Chapter XXI]{SL} there is an exact sequence 
of Koszul homology groups:
\[
\xymatrix@R=0.2cm{
H_pK(s_1,\dots,s_{n-1})\ar[r] & H_pK(s_1,\dots,s_{n-1})\ar[r] & H_pK(s_1,\dots,s_{n})\ar[r]&\\
\dots &\dots &\dots \\
H_1K(s_1,\dots,s_{n})\ar[r] & H_0K(s_1,\dots,s_{n-1})\ar[r] & H_0K(s_1,\dots,s_{n-1})\ar[r]&.
}
\]
For all $p>0$ the group $H_pK(s_1,\dots,s_{n})$ is between two groups which are zero by induction 
(or by the previous remark on $H_0K$), thus it is zero.
\end{proof}

Considering the case of two or three disjoint divisors, we obtain the following results.

\begin{corollary}\label{teokoszul2}
Let $X$ be a smooth projective variety over $\cc$, $E_1,E_2$ be effective divisors of $X$
such that $E_1\cap E_2=\emptyset$ and $s_{i}\in H^0(X, E_i)$ be a defining section for $E_i$, $i=1,2$. 
If $D\in {\rm WDiv}(X)$ is such that $h^1(X,D-E_1-E_2)=0$, then there is a surjective morphism
 \[
 H^0(X,D-E_1)\oplus H^0(X,D-E_2)\to H^0(X,D),\quad (f_1, f_2)\mapsto f_1s_1 + f_2s_2.
 \]
\end{corollary}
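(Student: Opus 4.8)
The plan is to specialize Theorem~\ref{koszul3} to the case $n=2$ and then extract the long exact cohomology sequence from the resulting short exact sequence of sheaves. For two disjoint effective divisors $E_1,E_2$ with defining sections $s_1,s_2$, the Koszul complex of Theorem~\ref{koszul3} reduces to
\[
0\longrightarrow \Osh_X(-E_1-E_2)\xrightarrow{\ d_2\ }\Osh_X(-E_1)\oplus\Osh_X(-E_2)\xrightarrow{\ d_1\ }\Osh_X\longrightarrow 0,
\]
where $d_1$ is $(u_1,u_2)\mapsto s_1u_1+s_2u_2$ and $d_2$ is the antisymmetric map $u_{12}\mapsto(s_2u_{12},-s_1u_{12})$ built from the two sections. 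By Theorem~\ref{koszul3} this is exact as a sequence of $\Osh_X$-modules. The first step is therefore simply to record this specialization and confirm that the maps are the stated ones.

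Next I would twist the whole sequence by $\Osh_X(D)$, which preserves exactness since tensoring with an invertible sheaf is exact. This gives the short exact sequence
\[
0\longrightarrow \Osh_X(D-E_1-E_2)\longrightarrow \Osh_X(D-E_1)\oplus\Osh_X(D-E_2)\longrightarrow \Osh_X(D)\longrightarrow 0,
\]
where the right-hand map now sends $(f_1,f_2)$ to $f_1s_1+f_2s_2$, matching the map in the statement. The key tool is then the associated long exact sequence in cohomology, whose relevant initial segment reads
\[
H^0(X,D-E_1)\oplus H^0(X,D-E_2)\xrightarrow{\ (f_1,f_2)\mapsto f_1s_1+f_2s_2\ } H^0(X,D)\longrightarrow H^1(X,D-E_1-E_2).
\]

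The surjectivity we want is exactly the statement that the image of the first map is all of $H^0(X,D)$, which by exactness is equivalent to the connecting map $H^0(X,D)\to H^1(X,D-E_1-E_2)$ being zero. Under the hypothesis $h^1(X,D-E_1-E_2)=0$, the target of this connecting map is the zero vector space, so the map is automatically zero and surjectivity follows. The argument is essentially formal once the short exact sequence is in place, so I do not expect a genuine obstacle; the only point requiring mild care is verifying that the right-hand morphism on global sections is literally $(f_1,f_2)\mapsto f_1s_1+f_2s_2$, i.e.\ that taking global sections of $d_1$ (twisted by $D$) reproduces the multiplication-by-sections map as claimed, rather than some twist of it. This is immediate from the explicit form of $d_1$ in Theorem~\ref{koszul3}.
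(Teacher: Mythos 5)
Your proposal is correct and follows essentially the same route as the paper: the paper also specializes the Koszul sequence of Theorem~\ref{koszul3} to $n=2$, tensors it with $\Osh_X(D)$ (harmless since $X$ is smooth, so $\Osh_X(D)$ is invertible), and reads off surjectivity from the long exact cohomology sequence using $h^1(X,D-E_1-E_2)=0$. Your write-up is in fact more careful than the paper's two-line proof, in particular in checking that the map on global sections is literally $(f_1,f_2)\mapsto f_1s_1+f_2s_2$.
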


\begin{proof}
Consider the exact sequence of sheaves obtained tensoring sequence (\ref{koszul}) with $\Osh_X(D)$:
\[
\xymatrix{
0\ar[r] & \Osh_X(D-E_1-E_2)\ar[r] & \Osh_X(D-E_1)\oplus \Osh_X(D-E_2)\ar[r] & \Osh_X\ar[r] & 0.
}
\]
Taking the associated exact sequence in cohomology one obtains the statement.
\end{proof}

\begin{remark}\label{bea}
More generally there is an exact sequence of sheaves \cite[Lemma I.5]{AB}:
\[
\xymatrix@C=8pt{
0\ar[r] & \Osh_X(D-E_1-E_2)\ar[r] & \Osh_X(D-E_1)\oplus \Osh_X(D-E_2)\ar[r] & \Osh_X\ar[r] & \Osh_{E_1\cap E_2\ar[r]} &0.
}
\]
\end{remark}
 
 \begin{corollary}\label{teokoszul3}
 Let $X$ be a smooth projective variety over $\cc$, 
 $E_1,E_2,E_3$ be effective divisors of $X$
such that $E_1\cap E_2\cap E_3=\emptyset$ 
and $s_{i}\in H^0(X, E_i)$ be a defining section for $E_i$, $i=1,2,3$. 
If $D\in {\rm WDiv}(X)$ then the morphism 
\[
\bigoplus_{i=1}^3 H^0(X, D-E_i)\to H^0(X,D),\ (f_1,f_2,f_3)\mapsto f_1s_1+f_2s_2+f_3s_3,
\]
is surjective if one of the following occurs:
\begin{enumerate}
\item $h^1(X, D-E_i-E_j)=0$ for all distinct $i,j\in \{1,2,3\}$ and $h^2(X, D-E_1-E_2-E_3)=0$.
\item $h^1(X, D)=0$, $h^p(X, D-E_i-E_j)=0$ for $p=1,2$ and  for all distinct $i,j\in \{1,2,3\}$, and $h^2(X, D-E_1-E_2-E_3)=\sum_{i=1}^3h^1(X, D-E_i)$.
\end{enumerate}
\end{corollary}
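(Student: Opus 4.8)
The plan is to feed the Koszul resolution of Theorem \ref{koszul3} with $n=3$ into cohomology, in the same spirit as the proof of Corollary \ref{teokoszul2}, but now splitting the resulting four-term sequence into two short exact sequences. Tensoring (\ref{koszul}) with $\Osh_X(D)$ for $n=3$ produces the exact sequence of sheaves
\[
0 \to \Osh_X(D-E_1-E_2-E_3) \to \bigoplus_{i<j}\Osh_X(D-E_i-E_j) \to \bigoplus_{i=1}^3 \Osh_X(D-E_i) \xrightarrow{h} \Osh_X(D) \to 0,
\]
where the rightmost map $h$ is induced by multiplication by the $s_i$ (its surjectivity as a sheaf map is exactly the content of Theorem \ref{koszul3} for $n=3$, using $E_1\cap E_2\cap E_3=\emptyset$). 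Writing $\mathcal{A},\mathcal{B},\mathcal{C},\mathcal{D}$ for the four sheaves from left to right and $\mathcal{K}:=\ker(h)=\operatorname{im}(\mathcal{B}\to\mathcal{C})$, I would break the sequence into
\[
0 \to \mathcal{K} \to \mathcal{C} \xrightarrow{h} \mathcal{D} \to 0 \qquad\text{and}\qquad 0 \to \mathcal{A} \to \mathcal{B} \to \mathcal{K} \to 0 .
\]
The statement to be proved is precisely the surjectivity of the induced map $H^0(\mathcal{C})\to H^0(\mathcal{D})$, and by the long exact sequence of the first short exact sequence this is governed by the connecting map $H^0(\mathcal{D})\to H^1(\mathcal{K})$: it suffices to show this map vanishes.

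For case (i) I would argue that $H^1(\mathcal{K})$ vanishes outright. From the second short exact sequence, $H^1(\mathcal{K})$ sits between $H^1(\mathcal{B})=\bigoplus_{i<j}H^1(X,D-E_i-E_j)$ and $H^2(\mathcal{A})=H^2(X,D-E_1-E_2-E_3)$. Both of these vanish under the hypotheses of (i), so $H^1(\mathcal{K})=0$ and the connecting map is automatically zero.

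Case (ii) is the delicate one, and is where I expect the main difficulty to lie, since there $H^1(\mathcal{K})$ need not vanish and one must instead force the connecting map to die for dimension reasons. Under the hypotheses $H^1(\mathcal{B})=H^2(\mathcal{B})=0$, the second short exact sequence yields an isomorphism $H^1(\mathcal{K})\cong H^2(\mathcal{A})=H^2(X,D-E_1-E_2-E_3)$, so that $h^1(\mathcal{K})=\sum_{i=1}^3 h^1(X,D-E_i)$ by the numerical assumption. On the other hand, the long exact sequence of the first short exact sequence reads
\[
H^0(\mathcal{D}) \xrightarrow{\delta} H^1(\mathcal{K}) \xrightarrow{\alpha} H^1(\mathcal{C}) \to H^1(\mathcal{D}),
\]
with $H^1(\mathcal{C})=\bigoplus_{i=1}^3 H^1(X,D-E_i)$ and $H^1(\mathcal{D})=H^1(X,D)=0$. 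Hence $\alpha$ is surjective, and since its source and target have the same dimension $\sum_i h^1(X,D-E_i)$, it is an isomorphism; in particular $\alpha$ is injective, so $\operatorname{im}\delta=\ker\alpha=0$, forcing $\delta=0$ and yielding the desired surjectivity.

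The crux is thus the bookkeeping that makes the two dimension counts coincide. The subtle point I would be careful about is that the identification $H^1(\mathcal{K})\cong H^2(\mathcal{A})$ must be a genuine isomorphism, obtained from the \emph{vanishing} of both neighbours $H^1(\mathcal{B})$ and $H^2(\mathcal{B})$, rather than a mere equality of dimensions; only then does combining it with the numerical hypothesis legitimately equate $h^1(\mathcal{K})$ with $h^1(\mathcal{C})$ and let the surjective map $\alpha$ be upgraded to an injection. This is exactly why case (ii) requires the extra vanishing $h^2(X,D-E_i-E_j)=0$ and the hypothesis $h^1(X,D)=0$, neither of which is needed in case (i).
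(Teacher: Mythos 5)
Your proof is correct and follows essentially the same route as the paper: you split the twisted Koszul resolution into the same two short exact sequences (your $\mathcal{K}$ is the paper's ${\rm Im}(d_2)$), kill $H^1(\mathcal{K})$ outright in case (i), and in case (ii) use the vanishing $h^p(X,D-E_i-E_j)=0$ for $p=1,2$ to get $H^1(\mathcal{K})\cong H^2(X,D-E_1-E_2-E_3)$, then upgrade the surjection onto $\bigoplus_i H^1(X,D-E_i)$ to an isomorphism by the dimension hypothesis, exactly as in the paper. No gaps; your closing remark on why (ii) genuinely needs the extra vanishings matches the structure of the paper's argument.
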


 \begin{proof}
The exact sequence in Theorem \ref{koszul3} can be split into 
two short exact sequences
\begin{equation}\label{k1}
 0  \rightarrow \oo_X(D-E_1-E_2-E_3)\xrightarrow {d_3} \oplus_{i<j}\oo_X(D-E_i-E_j)\xrightarrow{d_2}  {\rm Im}(d_2)\rightarrow 0.
\end{equation}
and 
\begin{equation}\label{k2}
 0 	\rightarrow  {\rm Im}(d_2) \xrightarrow{i}  \oplus_{k=1}^3\oo_X(D-E_k)\xrightarrow {d_1} \oo_X(D)\rightarrow 0,
 \end{equation}
where $i$ is the inclusion morphism. 
These give rise to the following exact sequences in cohomology:
{\footnotesize
\[
  \oplus_{i<j}H^1(X, D-E_i-E_j)\rightarrow H^1(X, {\rm Im}(d_2))
  \rightarrow H^2(X, D-E_1-E_2-E_3)\rightarrow   \oplus_{i<j}H^2(X, D-E_i-E_j)
\]
\[
 \oplus_{k=1}^3H^0(X, D-E_k)\xrightarrow {\phi} H^0(X, D)
 \xrightarrow {\phi'} H^1(X, {\rm Im}(d_2))\xrightarrow{\phi''} \oplus_{k=1}^3H^1(X, D-E_k)\rightarrow H^1(X, D). 
\]
}
If  (i) holds, by the first sequence we obtain that $H^1(X, {\rm Im}(d_2))=0$, 
then by  the second sequence  the morphism $\phi$ is surjective.\\
On the other hand, if (ii) holds, since
$h^1(X, D)=0$ then $\phi''$ is surjective by the second sequence. 
Moreover by  the first sequence  we have that 
\[
{\rm dim}(H^1(X, {\rm Im}(d_2)))=h^2(X, D-E_1-E_2-E_3)={\rm dim}( \oplus_{k=1}^3H^1(X, D-E_k)),
\]
 then $\phi''$ is an isomorphism and one obtains the statement.
\end{proof}

\subsection{Computing Cox rings of K3 surfaces}\label{techniques K3}

We start proving a consequence of Corollary \ref{teokoszul3} for K3 surfaces.
We will denote by $\BNef(X)$ the Hilbert basis of the nef cone $\Nef(X)\subseteq \Cl(X)_\qq$.

\begin{lemma}\label{aux}
Let $X$ be a smooth projective K3 surface over $\cc$ 
and let $D= N_1+N_2+N_3$, where $N_1,N_2,N_3$ are nef and non zero.
Then there exist $N_1', N_2', N_3'$ nef, effective and non zero 
such that $D\sim N_1'+N_2'+N_3'$ and $N_1'\cap N_2'\cap N_3'=\emptyset$.
\end{lemma}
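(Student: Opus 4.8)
The goal is to replace the three nef divisors $N_1,N_2,N_3$ by linearly equivalent nef representatives whose base loci are disjoint, so that the Koszul machinery of Corollary \ref{teokoszul3} applies. The plan is to work one nef class at a time and exploit the description of base loci of nef divisors on K3 surfaces from Proposition \ref{lsk3}. First I would observe that each $N_i$, being nef and nonzero on a K3 surface, is effective (this is immediate from Riemann--Roch, since $h^0+h^2=\chi+h^1\ge 2+\tfrac12 N_i^2>0$ and $h^2=h^0(-N_i)=0$ by nefness). The point is then to control where the members of $|N_i|$ can fail to move. By Proposition \ref{lsk3}(i)--(ii), a nef divisor $N$ has a base point only in the exceptional situation $N\sim kF+E$ with $F^2=0$, $E^2=-2$, $F\cdot E=1$ and $k\ge 2$; and even then the base locus is contained in the fixed component, which lies on the $(-2)$-curve $E$. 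So for each $i$ I would distinguish the base point free case (where $|N_i|$ has no base points at all and a general member can be chosen to avoid any prescribed finite set of points) from the exceptional case.

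The key step is to arrange the three chosen members in general position. In the base point free case for $N_i$, Bertini's theorem lets me choose a representative $N_i'\in |N_i|$ that is a smooth irreducible curve (or at least one avoiding any finite collection of prescribed points), and moreover I can move it to meet the others transversally. The substantive work is the exceptional case: if $|N_i|=|kF+E|$ has a fixed part, I would pick the representative that decomposes along $F$ and $E$, and note that the only forced intersection points lie on the fixed curve $E$. The strategy is then to ensure that the three base loci, which are each either empty or concentrated on a single $(-2)$-curve, can be pulled apart: since each $N_i'$ may be moved freely away from its base locus in its free part, I would choose the $N_i'$ successively, at each stage using the freedom of the moving part of $|N_i|$ to avoid the (finite, or curve-supported) constrained loci already fixed by $N_1',\dots,N_{i-1}'$.

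I expect the main obstacle to be the simultaneous handling of the exceptional hyperelliptic-type case, where two or even all three of the $N_i$ carry fixed components supported on $(-2)$-curves. If two such fixed curves coincide, the naive "move one off the other" argument can fail, and one must argue more carefully that the defining sections can still be chosen with empty common intersection—perhaps by replacing $N_i$ within its linear system by a representative whose fixed curve is disjoint from the others, using that distinct members of $|kF+E|$ share base points only along $E$ and that $F$ itself moves in a pencil $|F|$ of elliptic curves sweeping out $X$. The cleanest route, which I would pursue, is to reduce everything to the free parts: write each $N_i'=M_i+Z_i$ with $M_i$ the moving part (base point free) and $Z_i$ the fixed part, choose the $M_i$ in general position so that $\bigcap M_i=\emptyset$, and then observe that $\bigcap N_i' \subseteq \bigcap M_i = \emptyset$ regardless of the fixed parts. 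This last reduction is what makes the disjointness robust, and verifying that $\bigcap_{i=1}^3 M_i=\emptyset$ for three base-point-free linear systems on a surface is the heart of the argument, following from a general-position count since a base point free system has no common point forced on it.
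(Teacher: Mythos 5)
Your opening steps (effectivity of nef classes via Riemann--Roch, and the reduction to a general-position argument when at least one $|N_i|$ is base point free) are fine and agree with the paper's first observation. The fatal problem is the ``cleanest route'' you propose for the hard case, when every $|N_i|$ has base points: the inclusion $\bigcap_i N_i' \subseteq \bigcap_i M_i$ is false --- it goes the wrong way. Writing $N_i'=M_i+Z_i$ with $Z_i$ the fixed part, the support of $N_i'$ is $\Supp(M_i)\cup\Supp(Z_i)$, so the common intersection of the $N_i'$ picks up every point lying on three of the pieces, for instance points of $Z_1\cap Z_2\cap Z_3$, and making the moving parts disjoint does nothing to remove these. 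Concretely, by Proposition \ref{lsk3}(ii) each $N_i\sim k_iF_i+E_i$ with $E_i$ a $(-2)$-curve which is a fixed component of $|N_i|$; if, say, $N_1=N_2=N_3=2F+E$ on a K3 surface carrying an elliptic fibration with a section $E$ (Picard lattice $U$), then $h^0(2F+E)=h^0(2F)=3$, so multiplication by the section defining $E$ is an isomorphism $H^0(2F)\to H^0(2F+E)$ and every member of $|2F+E|$ contains $E$: any three representatives meet along $E$, even though their moving parts in $|2F|$ can be chosen pairwise disjoint. So no argument that keeps each representative inside its own linear system can succeed in general; your middle paragraph correctly flags this obstacle, but the reduction you then propose does not resolve it.

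This is exactly why the paper's proof takes a different shape. When all three systems have base points, the paper does not choose representatives of the original classes $[N_i]$ at all: it \emph{regroups} the pieces into a new sum of three nef divisors, which is allowed because the statement only requires $D\sim N_1'+N_2'+N_3'$, not $N_i'\sim N_i$. If the fixed curves pairwise meet ($E_i\cdot E_j\neq 0$ for $i\neq j$), it rewrites $D\sim k_1F_1+(k_2F_2+k_3F_3)+(E_1+E_2+E_3)$, where the third summand is nef and the first is base point free; if some $E_j\cdot F_i\neq 0$ with $i\neq j$, it rewrites $D\sim (k_iF_i+E_i+E_j)+k_jF_j+(k_rF_r+E_r)$, again with a base point free summand; and only in the residual configuration ($E_j\cdot F_i=0$ for all $i\neq j$ and $E_1\cdot E_2=0$) does it argue directly that the three divisors $k_iF_i+E_i$ can be arranged to have empty triple intersection, using that $|k_3F_3|$ is free and $E_3$ avoids $F_1\cap F_2$. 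This trading of fixed components between the summands, so that one new summand becomes base point free, is the key idea missing from your plan; without it the fixed-part obstruction you yourself identified cannot be circumvented.
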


\begin{proof}
Observe that if $D$ is linearly equivalent to a sum of three nef non-zero 
divisors $D_i$ such that one of them is base point free, then 
up to linear equivalence we can assume that the $D_i$'s are effective divisors 
with $D_1\cap D_2\cap D_3=\emptyset$.
In particular, the result is proved if  $|N_i|$ is base point free for some $i=1,2,3$.

Assume now that $|N_i|$ has base points for all $i$.
By Proposition \ref{lsk3} ii) we have that $N_i\sim k_iF_i+E_i$, 
where $k_i\geq 2$ is an integer, $F_i$ is nef and primitive 
with $F_i^2=0$ and $E_i^2=-2, F_i\cdot E_i=1$.
If $E_i\cdot E_j\not=0$ for all $i,j$, then $E_1+E_2+E_3$ 
is nef and $D\sim k_1F_1+(k_2F_2+k_3F_3)+(E_1+E_2+E_3)$,
where $|k_1F_1|$ is base point free. 
Thus we conclude by the first remark in the proof.

Moreover, if $E_j\cdot F_i\not=0$ for some $i\not=j$
then $k_iF_i+E_i+E_j$ is nef and 
$D\sim (k_iF_i+E_i+E_j)+k_jF_j+(k_rF_r+E_r)$, 
where $i,j,r$ are distinct, and $|k_2F_2|$ is base point free.
As before, we conclude by the first remark in the proof.

Thus we can assume that $E_j\cdot F_i=0$ for all $i\not=j$ 
and that $E_1\cdot E_2=0$, up to permuting the $N_i$'s.
In this case  $N_1, N_2,N_3$ 
are linearly equivalent to three effective divisors with empty intersection
since $k_1F_1+E_1$ and $k_2F_2+E_2$ intersect at points in $F_1\cap F_2$ 
and $k_3F_3+E_3$ is linearly equivalent to an effective divisor not passing through such points,
since  $|k_3F_3|$ is base point free and $E_3\cap F_1$ is empty.
This concludes the proof.  \end{proof}

\begin{theorem}\label{gen}
Let $X$ be a smooth projective K3 surface over $\cc$ 
and let $D= \sum a_iN_i$, where $[N_i]$ are elements 
of $\BNef(X)$ and $a_i$ are positive integers.
If $R(X)$ has a generator in degree $[D]$, then either
$\sum a_i\leq 3$ or $D\sim 2(F+F')$ where $F,F'$ are distinct nef, primitive divisors 
with $F^2=(F')^2=0$ and $F\cdot F'=2$.
\end{theorem}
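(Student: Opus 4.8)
The plan is to prove the contrapositive: assuming $\sum a_i\ge 4$ and that $[D]$ is \emph{not} of the form $2(F+F')$ with $F\cdot F'=2$, I will show that $R(X)$ has no generator in degree $[D]$. By the definition of a generating degree it suffices to produce nonzero, nef, effective divisors $E_1,E_2,E_3$ with $E_1\cap E_2\cap E_3=\emptyset$ such that $R:=D-E_1-E_2-E_3$ is again nef and nonzero and $h^1(X,D-E_i-E_j)=0$ for all distinct $i,j$. Indeed, then $h^2(X,D-E_1-E_2-E_3)=h^2(X,R)=h^0(X,-R)=0$ by Serre duality (recall $K_X\sim 0$), so Corollary \ref{teokoszul3}(i) yields the surjectivity of
\[
\bigoplus_{i=1}^3 H^0(X,D-E_i)\to H^0(X,D),\qquad (f_1,f_2,f_3)\mapsto f_1s_1+f_2s_2+f_3s_3 .
\]
Since each $E_i$ and each $D-E_i$ is nonzero and effective, every section in $R(X)_D$ becomes a sum of products of sections of strictly smaller degree, so no minimal generating set needs an element in degree $[D]$.

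It remains to construct such a decomposition, which I would do by a short case analysis. Writing $D-E_i-E_j=E_k+R$, Proposition \ref{lsk3}(iii) shows $h^1(X,E_k+R)=0$ unless $E_k+R$ is proportional to a primitive isotropic class, i.e. $(E_k+R)^2=0$; as $E_k,R$ are nef this forces $E_k$ and $R$ to be proportional to one and the same primitive class of self-intersection zero. The empty triple intersection will be arranged by Lemma \ref{aux}; note that a primitive nef class of self-intersection zero is base point free by Proposition \ref{lsk3}(ii), so in the all-isotropic case the $E_i$ are base point free and Lemma \ref{aux} keeps their classes fixed, whereas whenever $R^2>0$ the bound $(E_k+R)^2\ge R^2>0$ holds for every nef $E_k$ and the argument is insensitive to the modification of Lemma \ref{aux}. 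If all $N_i$ are proportional, say $D\sim mN$ for a single class of $\BNef(X)$, then by Proposition \ref{ray} the ray $R_{[N]}$ is generated in degrees at most three, so $D$ with $m\ge 4$ is not a generator. If some $N_i$ has $N_i^2>0$ I put it into $R$, so that $(E_k+R)^2\ge R^2>0$ for all $k$. If every $N_i$ is isotropic, the classes are not all equal and $\sum a_i\ge 5$, I take $R=F+F'$ for two distinct components, which has $R^2>0$, and distribute the remaining $\ge 3$ summands among the $E_i$. This leaves only $\sum a_i=4$ with all $N_i$ isotropic: the multiplicity patterns $3+1$, $2+1+1$, $1+1+1+1$ are handled by choosing $R$ to be a multiplicity-one component, so that each $E_k+R$ carries two non-proportional isotropic classes; the sole pattern admitting no such choice is $2+2$, i.e. $D\sim 2F+2F'$.

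The exceptional case $D\sim 2F+2F'$ is therefore treated directly through the ray $R_{[H]}$, where $H:=F+F'$, $H^2=2t$ and $t:=F\cdot F'\ge 1$. By Proposition \ref{ray} the ray is generated in degree one when $H$ is not hyperelliptic, and in degrees one and three when $H^2=2$ (that is $t=1$); in either case the degree-two element $D=2H$ is not a generator. When $H$ is hyperelliptic with $H^2>2$ (so $t\ge 2$), Proposition \ref{ray}(iii) only bounds the generator degrees by $\{1,2\}$, and whether degree two is genuinely needed is decided by the surjectivity of $\Sym^2 H^0(X,H)\to H^0(X,2H)$. Since $H^2>0$ gives $h^1(X,H)=h^1(X,2H)=0$, Riemann--Roch yields $h^0(X,H)=t+2$ and $h^0(X,2H)=4t+2$, hence
\[
\dim \Sym^2 H^0(X,H)-h^0(X,2H)=\tfrac12(t-1)(t-2).
\]
For $t=2$ this difference is zero and the hyperelliptic involution forces the map to fail surjectivity, producing the genuine generator in degree $2(F+F')$ with $F\cdot F'=2$; for $t\ge 3$ the difference is positive, and one shows, using the hyperelliptic geometry of $|H|$ described by Proposition \ref{hyp}, that the map is surjective, so $[D]$ is not a generating degree.

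I expect the main obstacle to be this last point: establishing $\Sym^2 H^0(X,H)\twoheadrightarrow H^0(X,2H)$ for a hyperelliptic $H$ with $H^2\ge 6$, the one situation where the Koszul surjectivity of Corollary \ref{teokoszul3} is unavailable and the positive dimension gap must be turned into actual surjectivity by hand. A secondary check is that Lemma \ref{aux} can be invoked class-preservingly in the all-isotropic case, which is precisely why I single out the base point freeness of primitive classes of self-intersection zero.
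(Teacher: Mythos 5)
Your overall reduction is sound and is essentially the paper's: the paper likewise decomposes $D$ into nef pieces, uses Lemma \ref{aux} to arrange an empty triple intersection, gets the $h^1$-vanishings from Proposition \ref{lsk3}(iii), concludes via Corollary \ref{teokoszul3}(i), and is left with exactly the residual isotropic cases $D\sim rF$ and $D\sim 2F+\ell F'$ (its Lemma \ref{ss'}), of which the only survivor is $2(F+F')$ with $F\cdot F'=2$. Your organization --- forcing $R^2>0$ whenever possible, which makes the argument insensitive to the class changes in Lemma \ref{aux} --- is correct, as is your treatment of the patterns $3+1$, $2+1+1$, $1+1+1+1$ and of the proportional case via Proposition \ref{ray}.

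The gap is the terminal case $D\sim 2H$, $H=F+F'$, $t:=F\cdot F'\ge 3$, and it is not merely an unfinished verification: the statement you propose to prove there is false. If $H$ really were hyperelliptic with $H^2>2$, the map $\Sym^2H^0(X,H)\to H^0(X,2H)$ could never be surjective: $\varphi_{|H|}$ would be a $2:1$ morphism onto a surface $Y\subset\pp^{t+1}$, every section of $H$ is pulled back from a hyperplane, so every divisor in the subsystem of $|2H|$ spanned by the image of $\Sym^2H^0(X,H)$ is invariant under the covering involution and cannot separate the two points of a general fibre of $\varphi_{|H|}$; but $2H$ is base point free and non-hyperelliptic (by Proposition \ref{hyp}: $(2H)^2>2$, an elliptic curve with $2H\cdot E=2$ would give $H\cdot E=1$, impossible for nef $H$ with $H^2>0$, and $2H\sim 2B$ with $B^2=2$ would force $H^2=2$), so $|2H|$ does separate them. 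In particular your dimension count is not probative --- the map has a large kernel, namely the quadrics through $Y$ --- and this non-surjectivity is exactly why Proposition \ref{ray}(iii) allows genuine degree-two generators on hyperelliptic rays. What actually closes the case, and is the paper's Lemma \ref{ss'}(iii), is that for $t\ge 3$ the divisor $H$ is never hyperelliptic, so Proposition \ref{ray}(i) applies directly: indeed $H^2=2t>2$; every smooth elliptic curve $E$ on a K3 spans an elliptic pencil, so $H\cdot E=t\ge 3$ if $E$ is linearly equivalent to $F$ or $F'$, and $H\cdot E\ge 4$ otherwise, since distinct pencils meet in at least $2$; and $H\sim 2B$ with $B^2=2$ cannot occur, because then $(B-F)^2=-2$, so one of $B-F$, $F-B$ would be effective, while $(B-F)\cdot F'=-2$ and $(F-B)\cdot F=-2$ contradict the nefness of $F'$, respectively of $F$. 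Replacing your surjectivity plan by this non-hyperellipticity argument completes the proof.
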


\begin{proof}
Assume that $D= \sum a_iN_i$ with $\sum_ia_i\geq 4$, $a_i> 0$ and $[N_i]\in \BNef(X)$.
By the hypothesis on $D$  we can find three nef divisors
$N_1, N_2, N_3$  such that $D-\sum_{i=1}^3N_i$ is nef and non zero.
Moreover, up to linear equivalence, we can assume that the $N_i$'s are effective 
divisors with $N_1\cap N_2\cap N_3=\emptyset$ by Lemma \ref{aux}.
 
The divisors $A_{ij}:=D-N_i-N_j$, with distinct $i,j\in \{1,2,3\}$, are nef, 
thus by Proposition \ref{lsk3} $h^1(X, A_{ij})=0$ unless $A_{ij}\sim kF$, 
where $F$ is nef and primitive with $F^2=0$ and $k\geq 2$ is an integer.
Moreover $h^2(X, D-N_1-N_2-N_3)=h^0(X, N_1+N_2+N_3-D)=0$ since $D-N_1-N_2-N_3$ is an effective non zero divisor.
Thus, unless $A_{ij}\sim kF$, we conclude by Corollary \ref{teokoszul3}.

We now consider the case $A_{ij}\sim kF$, that is $D\sim N_i+N_j+kF$ with $k\geq 2$ and $F$ as above. 
We have that $h^1(X, D-2F)=0$ unless $D\sim 2F+\ell F'$, where $F'$ 
is nef and primitive with $(F')^2=0$ and $\ell\geq 2$.
This case is considered in Lemma \ref{ss'}, which shows 
that $R(X)$ is not generated in degree $[D]$ unless 
$D\sim 2(F+F')$ with $F\cdot F'=2$.

Assuming that $D\not\sim 2F+\ell F'$ where $F,F'$ are as above, 
we now prove that either $h^1(X, D-F-N_i)$ or $h^1(X, D-F-N_j)$ is zero.
Assume on the contrary that these are both non zero. 
By Proposition \ref{lsk3} ii),
\[
D-F-N_i\sim N_j+(k-1)F\sim \ell_i F_{i},
\]
where $F_i$ is nef and primitive with $(F_i)^2=0$ and $\ell_i\geq 2$. 
This implies that $F\sim F_i$ and $N_j\sim (\ell_i-k+1)F$.
The same argument for $D-F-N_j$ gives that $N_i\sim (\ell_j-k+1)F$.
Thus $D\sim rF$ with $r\geq 4$. 
By Lemma \ref{ss'} in this case $R(X)$ is not generated in degree $[D]$.

Thus we can assume that $h^1(X, D-F-N_i)=0$.
Taking $E_1,E_2\in |F|$  distinct and $E_3\in |N_i|$
 we can thus conclude applying Corollary \ref{teokoszul3} i).
Observe that $h^2(X,D-E_1-E_2-E_3 )=h^2(N_j+(k-2)F)=0$ since $N_j+(k-2)F$ is an effective non zero divisor.
 \end{proof}
 
\begin{lemma}\label{ss'}
Let $X$ be a K3 surface and let $D=aF+bF'$, 
where $F,F'$ are distinct primitive nef divisors on $X$ with $F^2=(F')^2=0$ 
and $a,b$ are integers with $a\geq b\geq 0$.
Then $R(X)$ has no generators in degree $[D]$ if one of the following holds:
\begin{enumerate}
\item $b=0$ and $a\geq 2$; 
\item $a\geq 3$;
\item $a=b=2$ and $F\cdot F'>2$.
\end{enumerate}
\end{lemma}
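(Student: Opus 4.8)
The plan is to handle the three cases separately and, in each, to exhibit sections of two strictly smaller nonzero effective classes whose products span $R(X)_{[D]}$; since $R(X)$ has no generator in degree $w$ exactly when $R(X)_{[D]}$ is spanned by such decomposable elements, this yields the statement. Throughout write $G=F+F'$ and $m=F\cdot F'$. I first record the elementary fact that $m>0$: as the N\'eron--Severi lattice of a K3 surface has signature $(1,\rho-1)$ it contains no totally isotropic plane, so the distinct primitive isotropic classes $F,F'$ cannot be orthogonal; being nef they satisfy $m\geq 1$.

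Case (i) is $D=aF$, $a\geq 2$. Since $F$ is primitive with $F^2=0$, Proposition \ref{lsk3} shows $|F|$ is base point free (its exceptional case needs $k\geq 2$, forcing $F^2>0$), so $|F|$ defines an elliptic fibration $\pi\colon X\to\pp^1$ with connected fibres and $F=\pi^*\oo_{\pp^1}(1)$. By the projection formula $H^0(X,aF)=H^0(\pp^1,\oo(a))$, so the ray $R_{[F]}$ is a polynomial ring generated in degree one and there is no generator in degree $aF$. Case (ii) is $D=aF+bF'$ with $a\geq 3\geq b$; if $b=0$ this is Case (i), so assume $b\geq 1$. Choosing two distinct, hence disjoint, members $E_1,E_2\in|F|$ I apply Corollary \ref{teokoszul2} to $D$: the required vanishing $h^1(X,(a-2)F+bF')=0$ holds by Proposition \ref{lsk3}(iii), since $(a-2)F+bF'$ is nef with positive self-intersection $2(a-2)bm$ (here $a-2\geq1$, $b\geq1$, $m\geq1$). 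Hence $H^0(X,D)=H^0(X,F)\cdot H^0(X,(a-1)F+bF')$ and there is no generator in degree $D$.

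Case (iii), $D=2G$ with $m>2$, is the heart of the argument. Applying Corollary \ref{teokoszul2} with two members of $|F|$ to $2F+F'$, and using $h^1(X,F')=0$, gives $H^0(X,2F+F')=H^0(X,F)\cdot H^0(X,G)$; consequently $H^0(X,F')\cdot H^0(X,2F+F')\subseteq H^0(X,F)\cdot H^0(X,F+2F')=:V_F$, so the two ``linear'' products collapse into the single hyperplane $V_F$, whose codimension equals $h^1(X,2F')=1$. The missing direction must therefore come from the ``diagonal'' products $V_G:=H^0(X,G)\cdot H^0(X,G)$, and the goal becomes $V_F+V_G=H^0(X,D)$. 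I would prove this by restricting to a general, hence smooth and integral, fibre $C\in|F|$ of genus one. Tensoring $0\to\oo_X(-F)\to\oo_X\to\oo_C\to0$ with $\oo_X(G)$ and with $\oo_X(2G)$, and using $h^1(X,F')=h^1(X,F+2F')=0$, the restriction $H^0(X,G)\to H^0(C,G|_C)$ is surjective and $H^0(X,2G)\to H^0(C,2G|_C)$ is surjective with kernel $s_C\cdot H^0(X,F+2F')\subseteq V_F$, where $s_C\in H^0(X,F)$ cuts out $C$. Since $G|_C$ has degree $G\cdot F=m\geq3$ on the elliptic curve $C$, it is projectively normal, so the image of $V_G$ in $H^0(C,2G|_C)$ is the full product $H^0(C,G|_C)\cdot H^0(C,G|_C)=H^0(C,2G|_C)$. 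Thus every $\sigma\in H^0(X,2G)$ agrees along $C$ with some $v\in V_G$, whence $\sigma-v\in s_C H^0(X,F+2F')\subseteq V_F$ and $\sigma\in V_F+V_G$; both $V_F$ and $V_G$ are spanned by decomposable elements, so there is no generator in degree $D$.

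The main obstacle is Case (iii), and specifically the observation that products having a single factor in $|F|$ or in $|F'|$ all land in the one hyperplane $V_F$, which forces the use of the quadratic products $H^0(X,G)\cdot H^0(X,G)$. The hypothesis $m>2$ enters at exactly one point, namely the threshold $\deg(G|_C)=m\geq 3$ guaranteeing projective normality of a line bundle on the elliptic curve $C$; for $m=2$ this fails, consistently with the genuine degree $2(F+F')$ generator in the excluded case $F\cdot F'=2$. The only routine technical care needed is to check that the general fibre $C$ is smooth and integral, so that dividing by $s_C$ is valid and $G|_C$ is an honest degree $m$ line bundle on a smooth genus one curve.
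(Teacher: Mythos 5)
Your proof is correct, and while your cases (i) and (ii) follow essentially the paper's own route (the paper also reduces $aF$ to symmetric powers of $H^0(X,F)$ and also applies Corollary \ref{teokoszul2} with two disjoint members of $|F|$, verifying $h^1(X,D-2F)=0$ by a contradiction argument rather than your direct signature-based observation that $F\cdot F'\geq 1$ forces $(D-2F)^2>0$), your case (iii) is genuinely different. The paper handles $D=2(F+F')$ with $F\cdot F'>2$ by invoking Saint-Donat's machinery: it checks via Proposition \ref{hyp} that $F+F'$ is not hyperelliptic (ruling out elliptic curves of degree two against $F+F'$) and then quotes Proposition \ref{ray}(i) to conclude that the whole ray $R_{[F+F']}$ is generated in degree one, so that $H^0(X,2(F+F'))$ is spanned by products from $H^0(X,F+F')$ alone. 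You instead prove the weaker but sufficient statement $V_F+V_G=H^0(X,D)$ by combining the Koszul corollary with restriction to a general smooth fibre $C\in|F|$ and the classical normal generation of line bundles of degree $\geq 3$ on elliptic curves. Your approach is more self-contained (it replaces the K3-specific projective normality input of Proposition \ref{ray} with an elementary fact about elliptic curves), it makes completely transparent where the hypothesis $F\cdot F'>2$ enters (the degree threshold $m\geq 3$ on $C$, consistent with the genuine generator at $F\cdot F'=2$), and it sidesteps the full case analysis of the hyperellipticity criterion (e.g.\ the possibility $F+F'\sim 2B$ with $B^2=2$, which the paper's proof does not explicitly exclude); what it costs is a longer argument and a dimension bookkeeping step ($h^1(X,2F')=1$) that the paper's ray-generation statement renders unnecessary. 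One typo-level slip: you describe case (ii) as ``$a\geq 3\geq b$'', but the lemma only assumes $a\geq b$, so $b$ may exceed $3$; your argument nowhere uses $b\leq 3$, only $b\geq 1$, so this is harmless.
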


\begin{proof}
If $b=0$, that is $D=aF$, then $R(X)$ is not generated in degree $[D]$ since 
$H^0(X, aF)$ is the $a$-th symmetric power of $H^0(X, F)$.

Now assume $b>0$ and $a\geq 3$.
We now prove that $h^1(X, D-2F)=0$.
If $h^1(X, D-2F)$ is not zero, then by Proposition \ref{lsk3} ii)
\[
D- 2F\sim (a-2)F+bF'\sim rF'',
\]
where $r\geq 2$ is an integer and $F''$ is nef and primitive with $(F'')^{2}=0$ 
The previous relation implies that $((a-2)F+bF')^2$ and thus, since $a>2$,
$F\cdot F'=0$. Since $F,F'$ are fibers of elliptic fibrations, 
this means that $F\sim F'$, contradicting our hypothesis.
By Corollary \ref{teokoszul2} with $E_1,E_2\in |F|$ distinct  
we conclude that $R(X)$ has no generators in degree $[D]$.

We finally consider the case $a=b=2$, that is when $D=2(F+F')$.
Observe that the fibers $F,F'$ of two distinct elliptic fibrations 
can not have intersection number one, since otherwise 
$F$ would be mapped isomorphically to $\pp^1$ by the morphism associated to $|F'|$.
Thus $(F+F')^2=2F\cdot F'>2$. Moreover, if $E$ is any elliptic curve,
then by the previous remark $(F+F')\cdot E> 2$ unless $E=F$, or $E=F'$, and $F\cdot F'=2$.
If $F\cdot F'>2$, then $F+F'$ is not hyperelliptic by Proposition \ref{hyp}.   
By Proposition \ref{ray} $R(X)_{[F+F']}$ is generated in degree one, in particular 
$R(X)$ is not generated in degree $[D]$.
\end{proof}

The following result  shows that $R(X)$ is not generated in degrees which are 
sums of a very ample class and the class of an elliptic fibration under certain conditions.

 \begin{lemma}\label{va}
Let $X$ be a K3 surface and $D=F+D'$ a nef divisor, where $F$ is nef with $F^2=0$ 
and $D'$ is very ample. Assume that $F\sim E_1+E_2$, where $E_1,E_2$ are $(-2)$-curves and 
that  the image of the natural map
\[
\phi: H^0(D-E_1)\oplus H^0(D-E_2)\to H^0(D)
\]
has codimension two. Then $R(X)$ has no generator in degree $[D]$.
\end{lemma}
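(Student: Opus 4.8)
The plan is to prove directly that $R(X)_D$ is spanned by products of homogeneous elements of strictly smaller positive degree, which is exactly the assertion that $R(X)$ has no generator in degree $[D]$. Note that the image of $\phi$ already consists of such products: it equals $s_1\cdot H^0(D-E_1)+s_2\cdot H^0(D-E_2)$, where $s_i$ is a defining section of $E_i$, and since $E_i$ together with $D-E_i=E_j+D'$ are effective, nonzero, and not linearly equivalent to $D$, these are genuine products of lower degree. So it suffices to produce, as products of lower degree, a two-dimensional complement of $\operatorname{im}(\phi)$ inside $H^0(X,\oo_X(D))$.

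First I would identify $\operatorname{im}(\phi)$ geometrically. Since $E_1,E_2$ are distinct $(-2)$-curves and $F^2=(E_1+E_2)^2=0$, one gets $E_1\cdot E_2=2$, so the scheme $Z:=E_1\cap E_2$ has length $2$ and $h^0(Z,\oo_Z)=2$. Feeding $E_1,E_2$ into the four-term sequence of Remark \ref{bea} and splitting it, the leftmost term is $\oo_X(D-E_1-E_2)=\oo_X(D')$; since $D'$ is ample and $K_X$ is trivial, Kodaira vanishing gives $h^1(X,\oo_X(D'))=0$. The resulting cohomology sequence then shows that $\operatorname{im}(\phi)$ equals the kernel of the restriction map $\operatorname{ev}\colon H^0(X,\oo_X(D))\to H^0(Z,\oo_X(D)|_Z)$, whose target is two-dimensional. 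Thus the codimension-two hypothesis says precisely that $\operatorname{ev}$ is surjective, and the required complement is exactly what is detected by evaluation along $Z$.

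Then I would build the complement from the decomposition $D=D'+F$. Because $F$ is nef with $F^2=0$, it cannot be of the form $kF_0+E$ with $k\ge 2$, $F_0^2=0$, $E^2=-2$, $F_0\cdot E=1$ (such a class has square $2k-2\ge 2$); hence $|F|$ is base point free by Proposition \ref{lsk3}(ii). I can therefore choose $u\in H^0(X,\oo_X(F))$ nonvanishing on the support of $Z$, so that multiplication by $u$ is an isomorphism $\oo_X(D')|_Z\xrightarrow{\sim}\oo_X(D)|_Z$. Since $D'$ is very ample it separates points and tangent directions, so $H^0(X,\oo_X(D'))\to H^0(Z,\oo_X(D')|_Z)$ is surjective; composing with $u$ shows that the products $u\cdot H^0(X,\oo_X(D'))$, which lie in $R(X)_{F}\cdot R(X)_{D'}$, map onto $H^0(Z,\oo_X(D)|_Z)$ under $\operatorname{ev}$. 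Combined with $\operatorname{im}(\phi)=\ker(\operatorname{ev})$ from the previous step, this yields $H^0(X,\oo_X(D))=\operatorname{im}(\phi)+u\cdot H^0(X,\oo_X(D'))$, a sum of products of strictly lower degree, proving that $R(X)$ has no generator in degree $[D]$.

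The step I expect to require the most care is the treatment of $Z$ when it is nonreduced, i.e.\ when $E_1$ and $E_2$ meet tangentially at a single point: there ``nonvanishing on $Z$'' must be interpreted as $u$ being a unit in $\oo_Z$, and the surjectivity of $H^0(X,\oo_X(D'))\to H^0(Z,\oo_X(D')|_Z)$ is the separation of tangent directions rather than of two distinct points. Both cases are handled uniformly by very ampleness together with the length-$2$ computation above, but this is the point where the argument is least automatic; the remainder is formal manipulation of the sequence in Remark \ref{bea} and the base-point-freeness of $|F|$.
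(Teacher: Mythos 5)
Your proof is correct, and its skeleton is the same as the paper's: identify $\operatorname{im}(\phi)$ with the sections vanishing on the length-two scheme $Z=E_1\cap E_2$, then use a section of the base-point-free pencil $|F|$ that is nonvanishing on $Z$ together with the very ampleness of $D'$ to produce a complement consisting of products of lower-degree sections. The difference is in how the image is identified, and it is not cosmetic. The paper avoids all cohomology: it observes that $\operatorname{im}(\phi)$ is \emph{contained} in the space $V_{p,q}$ of sections vanishing at the two (possibly infinitely near) intersection points, notes that $V_{p,q}$ has codimension at most two, and uses the codimension-two hypothesis to force equality; it then exhibits two explicit products $s_1t,s_2t$ spanning a complement of $V_{p,q}$. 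You instead split the four-term sequence of Remark \ref{bea}, use $D-E_1-E_2=D'$ together with $h^1(X,D')=0$ (Kodaira vanishing, or Proposition \ref{lsk3}(iii)), and get $\operatorname{im}(\phi)=\ker(\operatorname{ev})$ \emph{unconditionally}. Since your step with $u\cdot H^0(X,D')$ then shows $\operatorname{ev}$ is surjective, your argument never actually uses the codimension-two hypothesis: you prove the stronger statement that, under the remaining hypotheses, the codimension is automatically two and $R(X)$ has no generator in degree $[D]$. This is a genuine gain (the hypothesis the paper verifies computationally in Test 5 is shown to be superfluous), and your scheme-theoretic treatment of $Z$ also makes the tangential ($I\!I\!I$-fiber) case rigorous where the paper handles it informally via ``infinitely near'' points. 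What the paper's route buys in exchange is brevity: no exact sequence or vanishing theorem is needed, because the hypothesis does that work for free.
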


\begin{proof}
Observe that $F$ defines an elliptic fibration $\varphi_{|F|}:X\to \pp^1$ 
and $E_1,E_2$ are the components of a reducible fiber of  $\varphi_{|F|}$.
Thus $E_1,E_2$ intersect at two points $p,q$,  which could be infinitely near.
Let $V_{p,q}\subset H^0(D)$ be the subspace of sections vanishing at $p$ and $q$.
Since the image of $\phi$ has codimension two, then it coincides with $V_{p,q}$.
Since $D'$ is very ample there are two sections $s_1,s_2\in H^0(D')$ such that 
$s_1(p)$ and $s_1(q)$ are not zero  and such that $s_2(p)=0$ and $s_2(q)\not=0$.  
Let $t\in H^0(F)$ be a section not vanishing on $E_1+E_2$.
The sections $s_1t$ and $s_2t$, together with $V_{p,q}$, generate $H^0(D)$.
 \end{proof}

We conclude this section recalling a result by Ottem  \cite[Proposition 2.2]{JCO}.

\begin{proposition}\label{ottem}
Let $X$ be a smooth projective K3 surface. Let $A$ and $B$ be nef divisors on $X$ such that $|B|$ is base point free. Then the multiplication map
\[
H^0(X,A)\otimes H^0(X,B)\to H^0(X,A+B)
\]
is surjective if $H^1(X,A-B)=H^1(X,A)=0$ and $H^2(X,A-2B)=0$.
 \end{proposition}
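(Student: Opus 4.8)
The plan is to reduce the surjectivity on $X$ to a multiplication map on a general member of $|B|$, where, via adjunction, the problem becomes a statement about the canonical system of a curve and can be settled by Corollary \ref{teokoszul2}. Since $|B|$ is base point free, a general $C\in|B|$ is a smooth curve by Bertini's theorem, and because $K_X\sim 0$ adjunction gives $\oo_C(K_C)=\oo_C(B)$. First I would fix the section $s_C\in H^0(X,B)$ cutting out $C$ and note that $s_C\cdot H^0(X,A)$ lies in the image of $\mu\colon H^0(X,A)\otimes H^0(X,B)\to H^0(X,A+B)$ and coincides with the kernel of the restriction $r\colon H^0(X,A+B)\to H^0(C,\oo_C(A+B))$. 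Because $h^1(X,A)=0$, the sequence $0\to\oo_X(A)\to\oo_X(A+B)\to\oo_C(A+B)\to 0$ shows that $r$ is surjective; hence $\mu$ is surjective if and only if $r\circ\mu$ surjects onto $H^0(C,\oo_C(A+B))$.

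Next I would identify the image of $r\circ\mu$ with the image of the multiplication map $\mu_C\colon H^0(C,\oo_C(A))\otimes H^0(C,\oo_C(B))\to H^0(C,\oo_C(A+B))$ on $C$. Indeed $r\circ\mu$ factors as the tensor of the two restriction maps followed by $\mu_C$. The restriction $H^0(X,B)\to H^0(C,\oo_C(B))$ is surjective since $h^1(X,\oo_X)=0$ (sequence $0\to\oo_X\to\oo_X(B)\to\oo_C(B)\to 0$), and the restriction $H^0(X,A)\to H^0(C,\oo_C(A))$ is surjective since $h^1(X,A-B)=0$ (sequence $0\to\oo_X(A-B)\to\oo_X(A)\to\oo_C(A)\to 0$). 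Therefore $r\circ\mu$ and $\mu_C$ have the same image, and it remains only to prove that $\mu_C$ is surjective.

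For the curve statement I would apply Corollary \ref{teokoszul2} on $C$. As $B$ is nef we have $B^2\ge 0$, so $g(C)=1+\frac{1}{2}B^2\ge 1$; the case $g(C)=1$ (that is $B^2=0$) is immediate since then $\oo_C(B)=\oo_C(K_C)$ is trivial and $\mu_C$ is the identity-type map $H^0(C,\oo_C(A))\otimes\cc\to H^0(C,\oo_C(A))$. For $g(C)\ge 2$ the canonical system $|K_C|$ is base point free and $\dim|K_C|=g(C)-1\ge 1$, so two general members $D_1,D_2\in|K_C|\subseteq|\oo_C(B)|$ satisfy $D_1\cap D_2=\emptyset$. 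Taking $E_i=D_i$ and $D=A+B$ in Corollary \ref{teokoszul2}, and using $(A+B)-D_i\sim A|_C$ and $(A+B)-D_1-D_2\sim (A-B)|_C$ (since $D_1+D_2\sim 2K_C\sim 2B|_C$), surjectivity of $H^0(C,\oo_C(A))^{\oplus 2}\to H^0(C,\oo_C(A+B))$—which is multiplication by the pencil $\langle D_1,D_2\rangle$ and hence forces $\mu_C$ to be surjective—follows once $h^1(C,\oo_C(A-B))=0$. This final vanishing is where the remaining hypotheses enter: the restriction sequence $0\to\oo_X(A-2B)\to\oo_X(A-B)\to\oo_C(A-B)\to 0$ yields the exact piece $H^1(X,A-B)\to H^1(C,\oo_C(A-B))\to H^2(X,A-2B)$, whose outer terms vanish by assumption, so the middle term vanishes. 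The hard part is precisely that for $B^2>0$ no two members of $|B|$ are disjoint, so Corollary \ref{teokoszul2} cannot be applied on $X$ directly; the restriction to $C$, combined with the adjunction identification $\oo_C(B)=\oo_C(K_C)$ that recasts the problem as multiplication by the canonical system, is what circumvents this, while the careful descent of the three cohomological hypotheses to $C$ is the technical heart.
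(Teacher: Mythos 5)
Your proof is correct, but note that the paper itself gives no proof of Proposition \ref{ottem}: it is quoted verbatim from Ottem's paper, so there is no argument to compare against line by line. Your route---restricting to a general $C\in|B|$, identifying $\oo_C(B)\cong\omega_C$ by adjunction, and running the base point free pencil trick through Corollary \ref{teokoszul2} on $C$---is a legitimate self-contained argument, and it is genuinely different from the one the paper's own toolkit suggests. Indeed, Corollary \ref{teokoszul3} (i) applied on $X$ with $D=A+B$ and $E_1,E_2,E_3$ three general members of $|B|$ (which satisfy $E_1\cap E_2\cap E_3=\emptyset$ because $|B|$ is base point free and $\dim X=2$) gives surjectivity of $(f_1,f_2,f_3)\mapsto\sum f_is_i$ onto $H^0(X,A+B)$ precisely under $h^1(X,A-B)=0$ and $h^2(X,A-2B)=0$, since $D-E_i-E_j\sim A-B$ and $D-E_1-E_2-E_3\sim A-2B$; as the image of that map lies inside the image of the multiplication map, the proposition follows in two lines, and one sees in passing that the hypothesis $H^1(X,A)=0$ is superfluous---whereas your argument genuinely needs it, to make the restriction $H^0(X,A+B)\to H^0(C,\oo_C(A+B))$ surjective. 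Two small points to tighten in your write-up: (a) if $B\sim kF$ with $k\ge 2$ (so still $B^2=0$), the general $C\in|B|$ is a disjoint union of $k$ elliptic curves, hence $H^0(C,\oo_C(B))\cong\cc^k$ rather than $\cc$; your conclusion still holds, since $\oo_C(B)\cong\oo_C$ and multiplication by a nowhere-vanishing section is onto; (b) for $B^2>0$ you use that the general $C$ is irreducible of genus $1+\tfrac{1}{2}B^2$---Bertini gives smoothness only, and connectedness requires the (easy, but worth stating) remark that $|B|$ cannot be composed with a pencil when $B^2>0$, so that the genus count and the claims about $|K_C|$ apply.
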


\section{K3 surfaces of Picard number three}
By Theorem \ref{classification} there are $26$ families of K3 
surfaces with Picard number three whose general member has finitely generated Cox ring. 
These families have been identified and studied by V.V. Nikulin in \cite{VN}. 
In this section we will determine the degrees of a generating set of $R(X)$ for each such family.

\subsection{Effective cones}\label{effective cone}
By Corollary \ref{gendegree} the Cox ring has a 
generator in degree $[D]$ for each $[D]$ in the Hilbert basis of ${\rm Eff}(X)$.
Moreover, by Theorem \ref{gen}, the Hilbert basis of the nef cone ${\rm Nef}(X)$ 
also has a key role in the computation of $R(X)$.
We computed such bases by means of a computer program written in Magma, 
see section \ref{magma}.
Observe that in \cite{VN} the author already computed the set of $(-2)$-curves of each family.

\begin{proposition}\label{eff-nef}
Table \ref{eff&nef}  describes the extremal rays and the Hilbert bases of ${\rm Eff}(X)$ and ${\rm Nef}(X)$ 
for each of the $26$ families of Mori dream K3 surfaces of Picard number three.
\end{proposition}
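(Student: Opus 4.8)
The final statement is Proposition \ref{eff-nef}, which asserts that Table \ref{eff&nef} correctly describes the extremal rays and Hilbert bases of $\Eff(X)$ and $\Nef(X)$ for each of the $26$ families. This is fundamentally a computational verification, so my proof plan centers on reducing the problem to a finite, lattice-theoretic computation that can be carried out (and certified) algorithmically.

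\textbf{Overall approach.} The plan is to exploit the fact, recalled after Theorem \ref{Mori dream}, that for a Mori dream K3 surface of Picard number at least three the effective cone $\Eff(X)$ is generated by the classes of the finitely many $(-2)$-curves on $X$. First I would recall from \cite{VN} the explicit set of $(-2)$-curve classes for each of the $26$ lattices listed in Theorem \ref{classification}; these are given as concrete integer vectors in the basis $e_1,e_2,e_3$ with the stated intersection form. From these vectors I would compute the cone they generate inside $\Cl(X)_\qq \cong \qq^3$, extract its extremal rays, and then compute the Hilbert basis of the resulting polyhedral cone with respect to the lattice $\Cl(X)$. Since the rank is three, every such cone is a simplicial-or-close-to-simplicial rational polyhedral cone in a three-dimensional space, so both the extremal-ray computation and the Hilbert-basis computation are standard finite operations (e.g. via Normaliz-type algorithms), which is exactly what the Magma programs of Section \ref{magma} perform.

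\textbf{Key steps in order.} (1) For each lattice, fix the intersection matrix and verify it has signature $(1,2)$, so that the positive cone is well defined and $\Eff(X)$ is strictly convex. (2) Determine the $(-2)$-classes: a class $v$ with $v^2=-2$ and $v$ effective (detected via $v\cdot h > 0$ for a fixed ample $h$, together with Riemann--Roch $h^0 \ge \tfrac{1}{2}v^2 + 2 = 1$) gives an effective $(-2)$-divisor, and the $(-2)$-curves are the irreducible such classes; I would cross-check these against the lists in \cite{VN}. (3) Form $\Eff(X) = \operatorname{cone}$ of these $(-2)$-curve classes and read off its extremal rays. (4) Compute $\Nef(X)$ as the dual cone $\Eff(X)^\vee$ under the intersection pairing (valid because on a K3 the nef cone is dual to the effective/pseudoeffective cone), and extract its extremal rays. (5) Compute the Hilbert bases $\BEff(X)$ and $\BNef(X)$ of the two cones relative to the lattice $\Cl(X)$. (6) Tabulate and confirm agreement with Table \ref{eff&nef}.

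\textbf{Main obstacle.} The genuine mathematical content, as opposed to bookkeeping, lies in two places. The first is confirming that the listed $(-2)$-classes are precisely the \emph{irreducible} ones, i.e. the actual $(-2)$-curves rather than mere effective $(-2)$-classes, since only the irreducible ones are guaranteed to be extremal generators of $\Eff(X)$; this requires checking that none of the candidate classes decomposes as a sum of two effective classes, which I would handle by the finiteness of the curve list in \cite{VN} and a decomposition check in the finitely many relevant degrees. The second, and I expect the more delicate, point is certifying the \emph{minimality and completeness} of the computed Hilbert basis of the nef cone: for a rational polyhedral cone of rank three the Hilbert basis is finite but need not consist only of the primitive generators of the extremal rays, so interior lattice points in the fundamental parallelepipeds must be enumerated correctly. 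Since this is purely a terminating lattice computation in dimension three, the risk is one of computational correctness rather than of mathematical difficulty, and the proof proper would consist of invoking the verified output of the Magma routines described in Section \ref{magma}, together with the theoretical input that $\Eff(X)$ is spanned by the $(-2)$-curves and $\Nef(X)$ is its dual.
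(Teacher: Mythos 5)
Your proposal is correct and follows essentially the same route as the paper's (largely implicit) proof: the paper likewise takes the $(-2)$-curve classes as generators of $\Eff(X)$ (computing them with its \texttt{FindEff} Magma routine and noting they were already determined by Nikulin in \cite{VN}), obtains $\Nef(X)$ as the dual cone, and certifies both Hilbert bases by the finite lattice computations implemented in the Magma programs of Section~\ref{magma}. The theoretical inputs you invoke — $\Eff(X)$ spanned by $(-2)$-curves for Picard number at least three, duality for the nef cone, and the irreducibility check on candidate $(-2)$-classes — are exactly those the paper relies on.
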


In the tables and in the proof of the following theorem we will adopt this notation:

\begin{itemize}
\item $\Eff(X)$ is the effective cone of $X$, $E(X)$ the list of its extremal rays (i.e. the classes of  $(-2)$-curves) and $\BEff(X)$ its Hilbert basis,
\item $\Nef(X)$ is the nef cone of $X$, $N(X)$ the list of extremal rays of $\Nef(X)$ and $\BNef(X)$ its Hilbert basis.
\end{itemize}

 \subsection{Generators of $R(X)$}\label{sec-gen}

\begin{theorem}\label{main}
Let $X$ be a  Mori dream K3 surface of Picard number three. 
The degrees of a set of generators of the Cox ring $R(X)$ are given in
Table \ref{TableGen}. All degrees in the Table are necessary to generate 
$R(X)$, except eventually for those marked with a star.
\end{theorem}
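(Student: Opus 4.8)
The plan is to prove Theorem \ref{main} by a case-by-case analysis over the $26$ families, combining the general structural result Theorem \ref{gen} with the cohomological surjectivity criteria from Section \ref{techniques koszul} to rule out generators in all but finitely many degrees. First I would observe that Theorem \ref{gen} already drastically restricts the possible degrees of a minimal generating set: every such degree is either the class of a $(-2)$-curve, a sum of at most three elements of $\BNef(X)$ (with repetition), or of the exceptional form $2(F+F')$ with $F\cdot F'=2$. Since the Hilbert basis $\BNef(X)$ is finite for each of the $26$ families (it is tabulated in Proposition \ref{eff-nef}), this produces, for each family, an explicit \emph{finite} list of candidate degrees. The classes of $(-2)$-curves are forced to appear by Corollary \ref{gendegree}, since they are extremal rays of $\Eff(X)$ and hence lie in its Hilbert basis; so the work concentrates on the nef candidate degrees.

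For each nef candidate degree $[D]$ in this finite list, the strategy is to exhibit a decomposition $D\sim A+B$ (or a decomposition into two or three disjoint divisors) for which one of the surjectivity criteria applies, thereby showing $R(X)$ has no generator in degree $[D]$ and removing it from the list. Concretely, I would apply Proposition \ref{ottem} when $D$ splits as a sum $A+B$ with $|B|$ base point free and the vanishing conditions $H^1(X,A-B)=H^1(X,A)=0$, $H^2(X,A-2B)=0$ hold; I would apply Corollary \ref{teokoszul2} or Corollary \ref{teokoszul3} when $D$ admits a decomposition into two or three divisors with empty common intersection and the relevant $h^1$ (respectively $h^1,h^2$) vanish. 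All the needed cohomology dimensions on a K3 surface are computable: by Riemann--Roch and Serre duality one has $h^0(X,D)=2+\tfrac12 D^2+h^1(X,D)$ for $D$ effective, and $h^1$ is controlled by Proposition \ref{lsk3}(iii), which vanishes except when $D\sim kF$ with $F$ primitive, $F^2=0$, $k\ge 2$. This is precisely what is automated in the MAGMA routines of Section \ref{magma}: one enumerates decompositions and checks the numerical vanishing conditions family by family. The degrees of the form $2(F+F')$ and the leftover pure-fiber multiples are handled separately using Lemma \ref{ss'} and Proposition \ref{ray}, which pin down exactly when such rays contribute a generator.

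The remaining assertion is that every listed degree is \emph{necessary} except those marked with a star, i.e.\ that the surviving candidate degrees genuinely occur in every minimal generating set. For this I would argue dimension-counting: a degree $[D]$ requires a generator precisely when the multiplication maps from lower degrees fail to surject onto $R(X)_D$, and one verifies this by computing $\dim R(X)_D$ via Riemann--Roch and comparing it with the dimension of the image of all products of previously-established generators. The starred degrees are exactly those for which the decision procedure cannot certify that the image fails to fill $R(X)_D$ for the \emph{general} member of the family — the rank of the multiplication map may jump for special members — so necessity is asserted only conditionally there. I would implement this as the minimality check described in Section \ref{magma}, iterating in order of increasing degree so that when testing $[D]$ all strictly smaller generators are already known.

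The hard part will be the degrees of the exceptional type $2(F+F')$ together with the borderline nef degrees where the Koszul and Ottem criteria fail because of a nonvanishing $h^1$ of the form $h^1(X,kF)\ne 0$. In those cases the clean surjectivity statements do not apply, and one must instead use the finer analysis of the ray $R_{[F+F']}$ through Lemma \ref{ss'} and Proposition \ref{ray}, or the explicit geometric argument of Lemma \ref{va} when $D=F+D'$ with $D'$ very ample and $F$ a reducible fiber, where the failure of surjectivity by codimension two is tied to sections vanishing at the two nodes of the fiber. Tracking these exceptional configurations uniformly across all $26$ families, and certifying in each case whether the leftover degree is genuinely needed (hence listed, possibly starred) or can be eliminated, is the principal obstacle and the reason the final table must record starred entries rather than a uniform clean statement.
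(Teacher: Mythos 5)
Your elimination half is essentially the paper's: reduce to nef degrees plus $(-2)$-curve classes via Corollary \ref{gendegree} and Theorem \ref{gen}, enumerate the finitely many sums of at most three elements of $\BNef(X)$, and kill candidates with Corollaries \ref{teokoszul2}, \ref{teokoszul3}, Proposition \ref{ottem}, Proposition \ref{ray} and Lemma \ref{va}, implemented in Magma. The genuine gap is in the \emph{necessity} half. You propose to certify that a surviving degree $[D]$ is needed by ``computing the dimension of the image of all products of previously-established generators'' and comparing it with $\dim R(X)_D$ from Riemann--Roch. But that image dimension is not a lattice-theoretic quantity: the rank of a multiplication map $H^0(A)\otimes H^0(B)\to H^0(A+B)$ depends on the actual surface, and Riemann--Roch gives no access to it, so the procedure you describe cannot be run on the data you have (this is not merely the jumping-for-special-members caveat you mention; it fails already for the general member). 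The paper's key idea here is different and purely effective: the function \texttt{Minimal} enumerates all ways of writing $[D]$ as a non-negative integral combination of the other candidate degrees, and checks that \emph{every} such combination contains the class of a $(-2)$-curve $E$, or one of two classes of $(-2)$-curves $E_1,E_2$ with $E_1\cdot E_2>0$, or a class from a triple satisfying Lemma \ref{l2}. Then any polynomial in sections of the other degrees vanishes on $E$, or on $E_1\cap E_2$, or misses a subspace of codimension one by the Koszul computation of Lemma \ref{l2}; since $|D|$ is base point free, no such polynomial expression can exhaust $H^0(D)$, so $[D]$ is necessary. Starred degrees are exactly those where this base-locus obstruction is unavailable, and Lemma \ref{l1} is additionally used to discard certain degrees of type $[3D]$. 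Without this (or some substitute) your proof of the ``all degrees are necessary'' clause does not go through.

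A second, smaller gap: two families escape your machinery entirely, and you cannot fix them with Lemma \ref{ss'} and Proposition \ref{ray} as you suggest. For $S_{1,1,1}\cong U\oplus A_1$ there is an elliptic fibration with a section, so nef classes need not be base point free (Proposition \ref{lsk3}(ii)) and the Koszul-type tests cannot be applied; for $S_{1,1,2}\cong U(2)\oplus A_1$ the exceptional degree $2(f+f')$ with $f\cdot f'=2$ actually occurs, and Lemma \ref{ss'} says nothing in that case (indeed a generator in that degree exists, so no elimination argument can succeed). The paper excludes these two families at the outset and quotes their Cox rings from \cite{A.H.L}; your write-up needs the same (or an independent treatment) for the statement to cover all $26$ families.
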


\begin{proof}
 By Corollary \ref{gendegree} the Cox ring has a generator in degree $[D]$ for each $[D]\in {\rm BEff}(X)$.
Moreover, if $[D]$ is not nef, there exists a $(-2)$-curve $C$ such that $D\cdot C<0$, 
so that $C$ is  contained in the base locus of $|D|$ and the multiplication map $H^0(X,D-C)\rightarrow H^0(X,D)$ 
by a non-zero element of $H^0(X,C)$ is surjective. Thus we can assume $[D]$ to be nef.

By Proposition \ref{lsk3} the linear system of any nef divisor is base point free unless there exists 
a smooth elliptic curve $F$ and a $(-2)$-curve $E$ such that $E\cdot F=1$.
This happens only for the case $21$, i.e. the family of K3 surfaces with 
Picard lattice isometric to $S_{1,1,1}\cong U\oplus A_1$, 
whose Cox ring has been computed in \cite[Proposition 6.6, ii)]{A.H.L}.
Moreover, the only case where there exist two smooth elliptic curves $D,D'$ with $D\cdot D'=2$ 
is case $22$, i.e. the family of K3 surfaces with Picard lattice isometric to $S_{1,1,2}\cong U(2)\oplus A_1$, 
whose Cox ring has been computed in \cite[Proposition 6.6, i)]{A.H.L}. 
For the following arguments we exclude the cases $21$ and $22$.

By Theorem \ref{gen} it is enough to consider those nef degrees which are sums of at most 
three elements of the Hilbert basis of the nef cone.
This allows to form a finite list of possible nef degrees, which is then analysed using the 
techniques in section \ref{techniques} and with the help of a Magma program 
described in section \ref{magma}.
More precisely, these are the main steps. Let $L$ 
be the set of degrees which are sums of at most 
three elements in ${\rm BNef}(X)$ 
and consider the following three sets:
\[
\begin{array}{l}
T1:=\{\{A,B\}: A,B\in E(X)\cup {\rm BNef}(X),\  A\cdot B=0\},\\[5pt]
T2:=\{\{E_1,E_2,E_3\}: E_i\in E(X)\cup {\rm BNef}(X),\  E_3\not\in E(X)\},\\[5pt]
T3:=\{(A,B): A,B\in {\rm BNef}(X), h^1(A-B)=h^1(A)=h^0(2B-A)=0\},\\[5pt]
T4:=\{3A: A\in {\rm BNef}(X),\ A^2\not=2\}\cup \{2A: A\in {\rm BNef}(X), A \text{ is not hyperelliptic or } A^2=2\}.
\end{array}
\]

We apply the following tests to any element $D\in L$:
\begin{enumerate}[Test 1.]
\item  checks whether $h^1(X,D-A-B)=0$ for some $\{A,B\}\in T1$. 
If this holds, then $R(X)$ has no generator in degree $[D]$ by  Corollary \ref{teokoszul2} and Test 1 returns false.
\item  checks  whether there exists $\{E_1,E_2,E_3\}\in T2$ 
such that $h^1(X,D-E_i-E_j)=0$ for all $i,j\in \{1,2,3\}$ and $h^2(X,D-E_1-E_2-E_3)=0$. 
If this holds, then  $R(X)$ has no generator in degree $[D]$ by  Corollary \ref{teokoszul3} and Test 2 returns false.
\item   checks whether $D$ can be written as a sum $A+B$, where $(A,B)\in T3$. 
If this holds, then $R(X)$ has no generator in degree $[D]$ by  Proposition \ref{ottem} and Test 3 returns false.
\item checks whether $[D]\in T4$. 
If this holds, then  $R(X)$ has no generator in degree $[D]$ by  Proposition \ref{ray}. 
and Test 4 returns false.
\item  if $D$ is a sum of two elements of ${\rm BNef}(X)$, it checks whether $D$ satisfies the hypotheses of Lemma \ref{va}.
If this holds, then $R(X)$ has no generator in degree $[D]$ and Test 5 returns false.
\item if $D$ is a sum of three elements of ${\rm BNef}(X)$, it checks the same property of Test 3, 
where $A$ is a sum of two elements in 
 ${\rm BNef}(X)$ and $B\in {\rm BNef}(X)$. 
 If this holds, then $R(X)$ has no generator in degree $[D]$ by Proposition \ref{ottem} 
 and Test 6 returns false.
\end{enumerate}
Let $G$ be the set containing the degrees of all $(-2)$-curves and the degrees in $L$ 
for which the tests are true. In order to determine which such degrees are 
necessary to generate $R(X)$, we apply the function \texttt{Minimal}. 
Given $[D]\in G$, the function 
first finds all possible ways to write $[D]$ as a linear combination with non-negative coefficients 
of elements in $G\backslash\{[D]\}$. If any such linear combination contains 
the degree of a $(-2)$-curve $E$, or one of the degrees of two $(-2)$-curves $E_1,E_2$
with positive intersection, then all sections in $H^0(G)$ which are polynomials 
in sections of $\cup_{A\in G\backslash\{[D]\}}H^0(A)$ have either $E$ or $E_1\cap E_2$ 
in the base locus. Since $|D|$ is base point free, this implies that $[D]$ is necessary 
to generate $R(X)$.
Moreover, if any linear combination as before contains one among the degrees 
of three $(-2)$-curves $E_1,E_2,E_3$ and $D$ satisifies the conditions 
of Lemma \ref{l2} with such curves, then $[D]$ is necessary 
to generate $R(X)$.

Finally, we apply Lemma \ref{l1} to show that generators in certain degrees of 
type $[3D]$ are not necessary.
\end{proof}

 \begin{lemma}\label{l2}
 Let $X$ be a K3 surface and let $D=E_1+E_2+E_3$ be a base point free divisor, 
 where $E_1,E_2,E_3$  are $(-2)$-curves such that $h^1(E_i+E_j)=0$ 
 for all distinct $i,j$. Then the natural map
 \[
 \psi:\bigoplus_{i=1,2,3} H^0(D-E_i)\to H^0(D)
 \]
 is not surjective. Moreover, if $E_1,E_2,E_3$  are disjoint, the image of $\psi$ 
 has codimension one.
 \end{lemma}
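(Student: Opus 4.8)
The plan is to identify $\psi$ with the map induced on global sections by the last arrow of the Koszul complex of Theorem \ref{koszul3} for the defining sections $s_1,s_2,s_3$ of $E_1,E_2,E_3$, and then to compute its cokernel. Since $D-E_i=E_j+E_k$ and $D-E_i-E_j=E_k$, that complex takes the shape
\[
0\to\oo_X\xrightarrow{d_3}\bigoplus_{k}\oo_X(E_k)\xrightarrow{d_2}\bigoplus_{i<j}\oo_X(E_i+E_j)\xrightarrow{d_1}\oo_X(D),
\]
with $\psi=H^0(d_1)$. Before computing I would record the cohomological input: as $X$ is a K3 surface, $h^1(\oo_X)=0$ and $h^2(\oo_X)=1$; as each $E_k$ is a $(-2)$-curve, $h^1(\oo_X(E_k))=h^2(\oo_X(E_k))=0$; and by hypothesis $h^1(\oo_X(E_i+E_j))=0$. (Riemann--Roch gives $h^0(\oo_X(E_i+E_j))=E_i\cdot E_j$, so $h^1=0$ already forces $E_i\cdot E_j\geq1$, i.e.\ the three curves meet pairwise.)

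To prove non-surjectivity I would distinguish whether $Z:=E_1\cap E_2\cap E_3$ is empty. If $Z\neq\emptyset$, pick $p\in Z$: every section $f_1s_1+f_2s_2+f_3s_3$ in the image of $\psi$ vanishes at $p$, since each $s_i(p)=0$; because $|D|$ is base point free there is a section of $H^0(X,D)$ not vanishing at $p$, hence not in the image, so $\psi$ is not surjective. If $Z=\emptyset$ the conclusion follows from the codimension computation below, codimension one being in particular nonzero.

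For the ``moreover'' clause I read the extra hypothesis as $Z=\emptyset$; then Theorem \ref{koszul3} makes the displayed complex exact, and I would break it into
\[
0\to\oo_X\to\bigoplus_{k}\oo_X(E_k)\to S\to0,\qquad 0\to S\to\bigoplus_{i<j}\oo_X(E_i+E_j)\xrightarrow{d_1}\oo_X(D)\to0,
\]
where $S=\ker d_1=\operatorname{im}d_2$. The first sequence, together with $h^1(\oo_X(E_k))=h^2(\oo_X(E_k))=0$ and $h^1(\oo_X)=0$, gives a connecting isomorphism $H^1(X,S)\cong H^2(X,\oo_X)\cong\cc$. Feeding this into the cohomology sequence of the second, and using $h^1(\oo_X(E_i+E_j))=0$, the connecting map
\[
H^0(X,D)\xrightarrow{\ \delta\ }H^1(X,S)\cong\cc
\]
is surjective with kernel $\operatorname{im}\psi$, so $\operatorname{coker}\psi\cong\cc$ and the image of $\psi$ has codimension one.

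The step I expect to be the crux is the $Z=\emptyset$ computation, and precisely the assertion that all of the one-dimensional group $H^2(X,\oo_X)$ passes into $\operatorname{coker}\psi$: this is where being a K3 surface is essential, the codimension-one defect of surjectivity being dictated by Serre duality $H^2(\oo_X)\cong H^0(\oo_X)^{\vee}$ rather than by the numerical data. A subsidiary point to address is the meaning of ``disjoint'': pairwise disjointness would give $E_i\cdot E_j=0$, contradicting both $h^1(\oo_X(E_i+E_j))=0$ and the nefness forced by base point freeness, so the only consistent reading---and the one my argument uses---is $E_1\cap E_2\cap E_3=\emptyset$.
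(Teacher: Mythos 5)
Your proof is correct and takes essentially the same route as the paper's: the same case split on whether $E_1\cap E_2\cap E_3$ is empty (non-surjectivity via base point freeness when it is not), and in the empty case the same splitting of the Koszul complex of Theorem \ref{koszul3} into two short exact sequences, using $h^1(E_k)=h^2(E_k)=0$, $h^1(\oo_X)=0$, $h^2(\oo_X)=1$ and $h^1(E_i+E_j)=0$ to get $h^1(\operatorname{Im}(d_2))=1$ and hence codimension one. Your reading of ``disjoint'' as $E_1\cap E_2\cap E_3=\emptyset$ is exactly the hypothesis the paper's argument uses, so there is no discrepancy.
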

 
 \begin{proof}
 If $E_1\cap E_2\cap E_3$ is not empy, then $\psi$ 
 is clearly not surjective, since $D$ is base point free.
On the other hand, if $E_1\cap E_2\cap E_3$ is empty,
then we can consider the associated Koszul exact sequence 
of sheaves in Theorem \ref{koszul3}, which gives rise 
to the two short exact sequences \eqref{k1} and \eqref{k2}.
The first sequence, using the fact that $h^0(E_i)=1, h^1(E_i)=h^2(E_i)=0$  
for $i=1,2,3$ and $h^1(\Osh_X)=0$, $h^2(\Osh_X)=1$, gives 
$h^0({\rm Im}(d_2))=2$ and $h^1({\rm Im}(d_2))=1$.
Using this and the fact that $h^1(E_i+E_j)=0$ in the second sequence, 
we find that the image of $\psi$ has codimension one in $H^0(D)$.
 \end{proof}
 
 \begin{lemma}\label{l1} Let $X$ be a K3 surface, $D$ be a nef and base point free divisor with $D^2=2$ 
and $i$ be the covering involution of the associated double cover $\varphi_{|D|}$. 
The Cox ring $R(X)$ has no generator in degree $[3D]$ if there exists a $(-2)$-curve 
$E$ which is not invariant for $i$ and such that $3D-E$ is effective and base point free.
\end{lemma}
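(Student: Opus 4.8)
The plan is to exploit the realization of $X$ as a double plane. Since $D$ is nef and base point free with $D^2=2$, the morphism $\varphi_{|D|}\colon X\to\pp^2$ presents $X$ as a double cover $\pi$ of $\pp^2$ branched along a sextic, with $D=\pi^*H$ and covering involution $i$. First I would record the $i$-eigenspace decomposition $H^0(X,nD)=H^0(\pp^2,\Osh(n))\oplus H^0(\pp^2,\Osh(n-3))$, the first summand being the $i$-invariant sections (pullbacks of plane curves) and the second the $i$-anti-invariant ones. For $n=3$ this gives $h^0(X,3D)=11$, where the $10$-dimensional invariant part coincides with the image of $\Sym^3H^0(X,D)\to H^0(X,3D)$ and has codimension one, while the anti-invariant part is spanned by a single section $w$. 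This $w$ is precisely the extra generator in degree $[3D]$ predicted by Proposition \ref{ray} ii); hence proving the lemma amounts to showing that $w$ already lies in the subalgebra of $R(X)$ generated in degrees different from $[3D]$.

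The strategy is to realize $w$, modulo invariant sections, as a product of two homogeneous elements of lower degree built from $E$. By Corollary \ref{gendegree} the defining section $s_E\in H^0(X,E)$ is a generator of $R(X)$ of degree $[E]\neq[3D]$, and since $3D-E$ is effective we may pick $t\in H^0(X,3D-E)$, a homogeneous element of degree $[3D-E]\neq[3D]$. The image $s_E\cdot H^0(X,3D-E)\subseteq H^0(X,3D)$ consists of the sections vanishing along $E$. The key point is that this image is \emph{not} contained in the invariant part $\Sym^3H^0(X,D)$: granting this, the invariant part together with a single such product spans all of $H^0(X,3D)$, exhibiting every section of $3D$—in particular $w$—as a polynomial in generators of degree $\neq[3D]$, which is exactly the assertion of the lemma.

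To establish the key point I would use the non-invariance of $E$ together with the base-point-freeness of $3D-E$. Put $E'=i(E)$, a curve distinct from $E$ because $E$ is not $i$-invariant. Since $|3D-E|$ is base point free it has no fixed component, so I can choose $t$ with $E'\not\subseteq\Supp(\div t)$. Then $\sigma:=s_E\,t$ has $\div(\sigma)=E+\div(t)$, which does not contain $E'$, whereas $\div(i^*\sigma)=E'+i^*\div(t)$ does contain $E'$. Hence $\sigma$ and $i^*\sigma$ have distinct divisors, so they are linearly independent and the anti-invariant part $\tfrac12(\sigma-i^*\sigma)$ is a nonzero multiple of $w$. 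Thus $w$ lies in the span of $\Sym^3H^0(X,D)$ and $\sigma=s_E t$, completing the argument.

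The main obstacle is exactly this last step: one must verify that multiplication by $s_E$ actually reaches the anti-invariant line rather than staying inside the codimension-one invariant subspace. This is where both hypotheses are essential—non-invariance of $E$ guarantees $E'\neq E$, and base-point-freeness of $3D-E$ guarantees a section $t$ whose divisor omits $E'$—so that the divisor comparison above detects a genuine anti-invariant component. The remaining ingredients, namely the eigenspace decomposition and the identification of the invariant part of $H^0(X,3D)$ with the image of $\Sym^3H^0(X,D)$, are standard for sextic double planes and follow from Riemann–Roch together with Proposition \ref{lsk3}.
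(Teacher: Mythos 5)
Your proof is correct and takes essentially the same approach as the paper's: both identify $\Sym^3H^0(X,D)$ as the codimension-one $i^*$-invariant subspace of $H^0(X,3D)$ and then exhibit a non-invariant section of the form $s_E\cdot t$ with $t\in H^0(X,3D-E)$ chosen so that $\div(t)$ avoids $i(E)$ (the paper phrases this choice as $t$ not being divisible by $i^*(s_E)$, which is the same condition). Your write-up simply makes explicit the eigenspace decomposition coming from the double cover and the divisor comparison that the paper leaves implicit.
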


\begin{proof}
Observe that ${\rm Sym}^3H^0(D)$ is a codimension one subspace of $H^0(3D)$ 
and is the invariant subspace for the action of $i^*$ on $H^0(3D)$.
Since $3D-E$ is effective and base point free, then there exists a non-constant 
section in $H^0(3D)$ of the form $ss_{E}$, where $s_E\in H^0(E)$ and $s\in H^0(3D-E)$ 
is not divisible by $i^*(s_E)$. Such section is not $i$-invariant, thus it generates 
 $H^0(3D)$ together with ${\rm Sym}^3H^0(D)$.
\end{proof}

\subsection{Magma programs} \label{magma}

In this section we briefly present the Magma programs used for the proof of Theorem \ref{main}.
We include as ancillary files in the arXiv version of the paper, the following files:

\begin{enumerate}[$\bullet$]
\item \texttt{LSK3Lib.m}: library for linear systems on K3 surfaces
\item\texttt{Find-2.m}:  library for computing the set of $(-2)$-curves of a Mori dream K3 surface\item \texttt{TestLib.m}: library containing the test functions described in the proof of Theorem \ref{main}
\item \texttt{MinimalLib.m}:  library containing functions which check the minimality of a generating set of $R(X)$
\item \texttt{K3Rank3.txt}: text file containing the intersection matrix and the list of classes of $(-2)$-curves for all Mori dream K3 surfaces of Picard number $3$
\item \texttt{Gen(K3Rank3)}: text file containing the list of classes which pass all tests 
in  \texttt{TestLib.m} (thus contains the degrees of a generating set of $R(X)$), for all Mori dream K3 surfaces of Picard number $3$
\end{enumerate}

\noindent We briefly describe the functions contained in each of the libraries.\\ 

\centerline{{\bf \texttt{LSK3Lib.m}.}}

\begin{enumerate}[$i.$]
\item \texttt{qua}: returns the intersection product of two vectors given the intersection matrix
\item \texttt{h01}, \texttt{h0}, \texttt{h1}: compute $h^0$ and $h^1$ of a divisor on a K3 surface given the set of classes of $(-2)$-curves and the intersection matrix
\item \texttt{Eff} and \texttt{HBEff}: compute the effective cone and a Hilbert basis of it
\item \texttt{Nef} and \texttt{HBNef}: compute the nef cone and a Hilbert basis of it
\item \texttt{Hyperelliptic}: checks whether a divisor on a K3 surface is hyperelliptic  
\item \texttt{IsNef}: checks whether a divisor on a K3 surface is nef  
\item \texttt{IsVAmple}: checks whether a divisor on a K3 surface is very ample\\
\end{enumerate}

\centerline{{\bf \texttt{Find-2.m}.}}

\begin{enumerate}[$i.$]
\item \texttt{qua}: same as before 
\item \texttt{Pts}:  given a diagonal matrix $D\in {\rm GL}(n,\qq)$ with $D_{1,1}>0$ and $D_{i,i}<0$ for $i\not=1$, 
$B \in {\rm GL}(n,\qq)$ and a non-negative integer $m$ returns 
the list \texttt{Pts(D,B,m)} of vectors $y\in \qq^n$ such that $y_1=m, y^TDy=-2$. 
\item \texttt{Test}: given an intersection matrix and a list of vectors, 
computes the cone $C$ generated by the vectors and 
returns true if, for any facet  $F$ of $C$, the intersection matrix of  
the vectors generating  $F$ is negative semidefinite.
\item \texttt{FindEff}: given an intersection matrix, 
returns the set of  classes of $(-2)$-curves 
with respect to a choice of an ample class. 
More precisely it follows these steps:
\begin{enumerate}
\item it finds a diagonal matrix $D\in {\rm GL}(n,\qq)$ 
as in function \texttt{Pts} and $B \in {\rm GL}(n,\qq)$ such that $D=BQB^T$;
\item  computing \texttt{Pts(D,B,0)}, it finds the list $L$ of all vectors $v=B^Ty\in \zz^n$ 
such that $v^2=-2$ and $y_1=0$ (this gives a root system);
\item if $\#L\geq 2$,   
after choosing randomly an integral combination $H$ of  the vectors in $L$
having non zero intersection with all of them, it finds the list  $L_0$ of simple roots 
having positive intersection with $H$;
\item  for $i\geq 1$ it finds inductively the list $L_i$ of all vectors $v=B^Ty\in \zz^n$ such that $v^2=-2$ and 
 $y_1=\frac{i}{d}$ with $d=|{\rm Det}(B)|$ having non-negative intersection with all vectors in $L_{i-1}$ 
 and at each step it defines $L$ as the union of $L_{i-1}$ with $L_i$;
 \item when the function \texttt{Test} applied to the list of vectors $L$ 
 returns true, then the program returns  $L$.
\end{enumerate}
\end{enumerate}

In the following functions, $D$ will denote the class of a divisor,
\texttt{hb} the list of vectors in the Hilbert basis of the nef cone and
\texttt{Q} an intersection matrix.\\

\centerline{\bf \texttt{TestLib.m}} 

\begin{enumerate}[$i.$]
\item \texttt{S}: given $D$,  \texttt{neg}, \texttt{hb} and \texttt{Q}, 
it gives the set of all classes $w$ in either $\texttt{neg}$ or $\texttt{hb}$ which are distinct from $D$ and such that $D-w$ is effective.
\item \texttt{Ti} (i=1,2,3,4) and \texttt{testi} (i=1,2,3,4,5,6): constructs the set $Ti$ and performs the Test i described in the proof of Theorem \ref{main}). 
\item \texttt{test}:  it checks if a divisor passes  test1, test2, test3, and test4.
\item \texttt{gen}: given \texttt{neg} and \texttt{Q}, it returns three lists $L_1, L_2, L_3$ 
of classes of divisors: $L_1$ contains the classes in \texttt{hb} for which \texttt{test} is true,
 $L_2$  the sums of two elements in \texttt{hb} for which  \texttt{test} and \texttt{test5} are true
and $L_3$ the sums of three elements in \texttt{hb} for which  \texttt{test} and \texttt{test6} are true.\\
\end{enumerate}

\centerline{\bf \texttt{MinimalLib.m}}
\begin{enumerate}[$i.$]
\item \texttt{RR}: given a matrix $M\in M_{m\times n}(\zz)$ and a vector $v\in \zz^m$, 
it finds all $w\in \zz^n$ with non-negative integral coefficients such that  $Mw=v$.
\item \texttt{SG}:  given  $D$,   \texttt{neg} 
  \texttt{hb}, $\texttt{Q}$ and a set of classes of divisors $G$, it finds all classes  $w$ in $G$  such that 
$D-w$ is effective.
\item \texttt{RRD}: it finds all possible ways to write  $D$ as a linear combination 
with non-negative integer coefficients of the classes  in \texttt{SG(D,neg,hb,Q,G)}.
\item \texttt{Minimal}: given $D$,  \texttt{neg}, \texttt{hb}, $\texttt{Q}$ 
and a set of classes of divisors $G$, it first computes \texttt{SG(D,neg,hb,Q,G)}.
If the latter is empty, then it returns true.
Otherwise, it finds all possible ways of writing $D$ 
as a non-negative linear combination 
of elements in $G$, using the function \texttt{RR}.  
After this, it computes three lists: the list  $B1$ gives the classes of $(-2)$-curves 
which appear in any possible writing of $D$ as before, the list $B2$ 
gives the pairs $E_1,E_2$ of classes of $(-2)$-curves such that 
either $E_1$ or $E_2$ appears in any writing of $D$  and $E_1\cdot E_2>0$,
the list $B3$ gives the triples $E_1,E_2,E_3$ of classes of $(-2)$-curves 
such that one of them appears in any writing of $D$ and such that 
the hypotheses of Lemma \ref{l2} are satisfied. 
If one among $B1$, $B2$ and $B3$ is not empty, then the function returns true 
and the three lists.
\end{enumerate}

\subsection{Some special cases}\label{special}

Looking at Table \ref{TableGen} one can see that there are two cases 
where $R(X)$ is generated in six degrees  
(these are $S_{1,1,1}$ and $S_{1,1,2}$, see Remark \ref{double}) 
and other cases where it is generated in  seven degrees:
$S_1, S_5, S_{4,1,1}, S_{1,3,1}, S_{1,1,3}$ and $S_{1,1,4}$.
In this section we will provide a presentation for the Cox ring 
of a very general member of the families $S_1$ and $S_{4,1,1}$.
We expect that similar techniques can provide a presentation 
of $R(X)$ also in the remaining cases.

 \begin{example}[Case $S_1$]\label{case1}
Let $X$ be a K3 surface with $\Cl(X)\cong S_1=(6)\oplus 2A_1$.
We denote the natural generators of $\Cl(X)$ by $e_1,e_2,e_3$ with
\[
e_1^2=6,\ e_2^2=e_3^2=-2,\ e_i\cdot e_j=0\, \text{ for } i\not=j.
\]
By Proposition \ref{eff-nef} the classes of the $(-2)$-curves can be taken to be:
\[
\begin{array}{lll}
f_1=e_2 & f_2=2e_1-3e_2-2e_3\\
f_3=e_3 & f_4= 2e_1-2e_2-3e_3\\
f_5=e_1-2e_2 & f_6=e_1-2e_3.
\end{array}
 \]

The Hilbert basis of the effective cone contains, besides the previous classes,  
the ample class $h:=e_1-e_2-e_3$. 
We now determine a presentation for $R(X)$, in particular we show that 
it is a complete intersection.

\begin{theorem}\label{S1}
Let $X$ be a K3 surface with $\Cl(X)\cong S_1=(6)\oplus A_1^2$. Then $X$ 
can be defined by an equation of the following form in $\pp(1,1,1,3)$:
\[
w^2=q_1(x_0,x_1,x_2)q_2(x_0,x_1,x_2)q_3(x_0,x_1,x_2)+f(x_0,x_1,x_2)^2,
\]
where the $q_1,q_2,q_3$ are homogeneous of degree $2$ and $f$ is homogeneous 
of degree $3$.
Moreover,  the Cox ring of a very general $X$ as before is isomorphic to 
$\cc[s_1,\dots, s_6,x_0,x_1,x_2]/I$, where the degrees of the generators are given by the columns of the following matrix

\[
\left(
\begin{array}{ccccccccc}
0 & 2 & 0 & 2 & 1 & 1 & 1 & 1 & 1\\
1 & -3 & 0 & -2  & -2 & 0 & -1 & -1 & -1\\
0 & -2 & 1 & -3 & 0 & -2 & -1 & -1 & -1
\end{array}
\right)
\]

and the ideal $I$ is generated by the following polynomials:
\[ 
s_1s_4s_5+s_2s_3s_6-f(x_0,x_1,x_2).
\]
\[
s_1s_2-q_1(x_0,x_1,x_2), s_3s_4-q_2(x_0,x_1,x_2), s_5s_6-q_3(x_0,x_1,x_2).
\]
\end{theorem}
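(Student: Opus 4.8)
The plan is to realise $X$ explicitly as a double plane and then read off both the equation and the Cox ring from the six $(-2)$-curves together with the ample class $h$.

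First I would analyse the polarization $h=e_1-e_2-e_3$. One computes $h^2=2$, and since $h\cdot(a,b,c)=6a+2b+2c=1$ has no integral solution, no class $F$ satisfies $h\cdot F=1$; hence by Proposition \ref{lsk3}(ii) the linear system $|h|$ is base point free. As $h^2=2$, the morphism $\varphi_{|h|}\colon X\to\pp^2$ is a double cover branched along a sextic $B$, and $X$ is defined in $\pp(1,1,1,3)$ by an equation $w^2=B(x_0,x_1,x_2)$, where $x_0,x_1,x_2$ is a basis of $H^0(X,h)=\varphi^*H^0(\pp^2,\Osh_{\pp^2}(1))$ and $w$ is a section of $3h$ spanning the complement of $\Sym^3H^0(X,h)$ in $H^0(X,3h)$ (a one-dimensional complement, since $h^0(3h)=11$ while $\dim\Sym^3H^0(X,h)=10$ by Riemann--Roch). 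I denote by $i$ the covering involution, so that $i^*$ is trivial on $H^0(X,h)$ and acts by $-1$ on $w$.

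Next I would identify the six $(-2)$-curves in this model. One computes $f_j\cdot h=2$ for all $j$, and they pair up as $f_1+f_2=f_3+f_4=f_5+f_6=2h$. Thus each pair is the preimage $\varphi^{-1}(Q_k)$ of a conic $Q_k\subset\pp^2$ which splits into two $(-2)$-curves, equivalently $B|_{Q_k}$ is a perfect square, and $i$ exchanges the two curves of each pair. Writing $s_j\in H^0(X,f_j)$ for the defining section (unique up to scale, since $h^0(f_j)=1$), the product $s_{2k-1}s_{2k}$ lies in $H^0(X,2h)$. Since $h^0(2h)=6=\dim\Sym^2H^0(X,h)$ one has $H^0(X,2h)=\Sym^2H^0(X,h)$, so $s_{2k-1}s_{2k}=q_k(x_0,x_1,x_2)$ for a quadric $q_k$; these are the relations $s_1s_2-q_1,\ s_3s_4-q_2,\ s_5s_6-q_3$.

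The degree $3h$ relation and the equation of $X$ then come out together. Because $i$ exchanges $f_1\leftrightarrow f_2$, $f_3\leftrightarrow f_4$, $f_5\leftrightarrow f_6$, after rescaling the $s_j$ I may assume $i^*(s_1s_4s_5)=s_2s_3s_6$. Then $s_1s_4s_5+s_2s_3s_6$ is $i$-invariant, hence lies in $\Sym^3H^0(X,h)$ and equals a cubic $f(x_0,x_1,x_2)$, giving the relation $s_1s_4s_5+s_2s_3s_6-f$; while $s_1s_4s_5-s_2s_3s_6$ is anti-invariant, hence a multiple of $w$. Eliminating $w$ via $w^2=(s_1s_4s_5-s_2s_3s_6)^2=(s_1s_4s_5+s_2s_3s_6)^2-4(s_1s_2)(s_3s_4)(s_5s_6)=f^2-4q_1q_2q_3$ and rescaling produces the stated equation $w^2=q_1q_2q_3+f^2$. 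Finally I would assemble the presentation: by Theorem \ref{main} the ring $R(X)$ is generated by $s_1,\dots,s_6,x_0,x_1,x_2$ in the indicated degrees, so there is a surjection $\cc[s_1,\dots,s_6,x_0,x_1,x_2]\to R(X)$ whose kernel contains the four relations above. Since $\operatorname{Spec}R(X)$ has dimension $\dim X+\operatorname{rank}\Cl(X)=5$ and the polynomial ring has dimension $9$, these four relations cut out a complete intersection of the correct dimension $5$ provided they form a regular sequence; as $R(X)$ is an integral domain of dimension $5$ receiving a surjection from an irreducible, reduced complete intersection of the same dimension, the map must be an isomorphism (equivalently, the multigraded Hilbert series $\prod(1-t^{\deg\mathrm{rel}})/\prod(1-t^{\deg\mathrm{gen}})$ matches $\sum_Dh^0(X,D)t^D$). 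The main difficulty will be exactly this last step: verifying, for very general $B$ of this shape (so that the six curves are all the $(-2)$-curves and $\Cl(X)=S_1$), that the four relations form a regular sequence and that the resulting scheme is irreducible and reduced, i.e. that there are no relations in higher degree.
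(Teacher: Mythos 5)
Your derivation of the defining equation (the first half of the theorem) is correct and takes a genuinely different, more economical route than the paper. Where the paper invokes the splitting criterion for double planes (Lemma \ref{rmk1}) to produce the cubic $f$, and then pins down the branch sextic by a Bezout argument applied to the pencil $\lambda_0q_1q_2q_3+\lambda_1f^2$, you obtain everything at once from the eigenspace decomposition of $H^0(X,3h)$ under the covering involution: after normalizing the $s_j$, the section $s_1s_4s_5+s_2s_3s_6$ is invariant, hence equals a cubic $f(x_0,x_1,x_2)$, while $s_1s_4s_5-s_2s_3s_6$ is anti-invariant, hence a nonzero multiple of $w$, and squaring this identity gives $w^2=f^2-4q_1q_2q_3$ up to rescaling. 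This simultaneously produces the fourth relation and the normal form of the equation; the paper uses the same invariance observation, but only to get the relation, not the equation. Your verification that $|h|$ is base point free (parity of $h\cdot F$ plus Proposition \ref{lsk3}(ii)) and that $H^0(2h)=\Sym^2H^0(h)$ are also fine.

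The second half, however, has a genuine gap, and you have flagged it yourself: you never prove that the ideal $I$ generated by the four relations is the whole kernel of $\cc[s_1,\dots,s_6,x_0,x_1,x_2]\to R(X)$. Your reduction is the right one --- it suffices that $V(I)$ be irreducible, reduced and of dimension $5$ --- but writing ``provided they form a regular sequence'' and calling this ``the main difficulty'' leaves exactly the theorem's remaining content unproved; without it one cannot exclude relations in higher degree, i.e.\ $I\subsetneq\ker$. The paper closes this gap with an inductive Bertini argument: ordering the relations as $g_1=s_1s_4s_5+s_2s_3s_6-f$, $g_2=s_1s_2-q_1$, $g_3=s_3s_4-q_2$, $g_4=s_5s_6-q_3$ and setting $X_i=V(g_1,\dots,g_i)\subset\cc^9$, the generality of $f,q_1,q_2,q_3$ lets one view $\{g_{i+1}=0\}\cap X_i$ as a general member of the linear system $L_i$ spanned on $X_i$ by the monomials of $g_{i+1}$; since $L_i$ has no fixed components and is not composed with a pencil (both already visible on the subsystem of monomials in $x_0,x_1,x_2$ alone), Bertini's first theorem \cite[Theorem 3.3.1]{Lazarsfeld} shows each $X_{i+1}$ is irreducible. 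This yields at once that $I$ is prime and that $\dim V(I)=5$, after which the surjection of five-dimensional affine domains onto $R(X)$ must be an isomorphism. Note that this is also precisely where the ``very general'' hypothesis enters; some argument of this kind (or an explicit primality or Hilbert-series verification) must be supplied before the presentation is established.
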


\begin{proof} Let $h=e_1-e_2-e_3\in \Cl(X)$, with the previous notation. 
Observe that  $h^2=2$, $h$ is ample and the associated linear system is base point free 
by Lemma \ref{lsk3}, thus it defines a degree two covering $\pi:X\to \pp^2$ ramified along a smooth 
sextic curve $B=\{F(x_0,x_1,x_2)=0\}$. Since $2h=f_1+f_2=f_3+f_4=f_5+f_6,$ 
the image by $\pi$ of the six $(-2)$-curves of $X$ are three smooth conics $Q_1,Q_2,Q_3\subseteq\pp^2$ 
such that $\pi^{-1}(Q_i)$ is the union of two smooth rational curves for each $i=1,2,3$.
By looking at the intersection graph of the $(-2)$-curves one can see that $\pi_{|\pi^{-1}(D)}\to D$ is trivial,  
thus by Lemma \ref{rmk1}  there exists a plane cubic $C=\{f(x_0,x_1,x_2)=0\}\subset \pp^2$ 
such that $B\cdot D=2C\cdot D$.
Let $\bar\lambda_0,\bar\lambda_1\in \cc$ such that the curve
\[
G_{\lambda_0,\lambda_1}(x_0,x_1,x_2):=\lambda_0q_1(x_0,x_1,x_2)q_2(x_0,x_1,x_2)q_3(x_0,x_1,x_2)+\lambda_1f(x_0,x_1,x_2)^2=0,
\]
intersects $B$ in $19$ distinct points, i.e. in $B\cap D$ and one more point.
By Bezout's theorem, since $B$ and $\{G_{\bar\lambda_0,\bar\lambda_1}(x_0,x_1,x_2)=0\}$ 
intersect in at least $37$ points counting multiplicity
and $B$ is irreducibile, then $F(x,y,z)=\alpha G_{\bar\lambda_0,\bar\lambda_1}(x_0,x_1,x_2)$ for some $\alpha \in \cc^{*}$. 
This proves the first part of the statement.

The degrees of the generators of $R(X)$ are given in Theorem \ref{main}.
Clearly any minimal generating set of $R(X)$ must contain the sections $s_1,\dots,s_6$ defining 
the $(-2)$-curves of $X$ and a basis $x_0,x_1,x_2$ of $H^0(h)$.
The first three relations are obvious, due to the fact that $s_1s_{2}$ defines the preimage 
of the conic $Q_1$, similarly for the other two cases.
Observe that $h^0(3h)=11$, $h^0({\rm Sym}^3H^0(h))=10$ and $s_1s_4s_5, s_2s_3s_6\in H^0(3h)$.
Moreover, ${\rm Sym}^3H^0(h)$ is the invariant subspace of $H^0(3h)$ for the natural action of the covering involution $i$ of $\pi$.
Since $s_1s_4s_5+s_2s_3s_6$ is invariant, then it belongs to ${\rm Sym}^3H^0(h)$. This gives the last relation.

We will now prove that $I$ is prime. 
Let $g_1,g_2,g_3,g_4$ be the generators of $I$ (in the order given in the statement), 
let $X_0=\cc^9$, $X_i=V(g_1,\dots,g_i)\subset \cc^9$ for $i=1,2,3$
 and $L_i$ be the linear system on $X_i$ 
generated by the divisors cut out by the monomials of $g_{i+1}$, for $i=0,1,2,3$.
The key remark is that, by the generality assumptions on $f$ and $q_1,q_2,q_3$, 
the zero set of $g_{i+1}$ is the general element of the linear system $L_i$ (up to a coordinate change in the variables $s_i$ for $g_1$).
The linear system $L_i$ has no components in its base locus and is not composed with a pencil 
for each $i=1,2,3$, since it can be easily checked that its subsystem generated by the 
monomials of $g_{i+1}$ in $x_0,x_1,x_2$ already satisfies both properties.
It follows that $X_i$ is irreducible by Bertini's first theorem \cite[Theorem 3.3.1]{Lazarsfeld},
i.e. $I$ is prime.
Since then the ring $\cc[s_1,\dots,s_9,x_0,x_1,x_2]/I$ is an integral domain, has Krull dimension $5$ and   surjects onto $R(X)$, then it is 
 isomorphic to  $R(X)$.
\end{proof}

The following is well-known, see for example  \cite[Proposition 1.7, Ch.3]{Vermeulen}.

\begin{lemma}\label{rmk1}
Let $B\subset \pp^2$ be a smooth plane curve of degree $6$, let $\pi:X\to \pp^2$ be the $2:1$ cover of $\pp^2$ branched along $B$, and let $D\subset \pp^2$ be a curve not containing components of $B$. The restriction of the cover:
$$\pi|_{\pi^{-1}(D)}:\pi^{-1}(D)\to D$$
is trivial if only if there exists a curve $C\subset \pp^2$ of degree $3$ such that $B\cdot D=2C\cdot D$.
\end{lemma}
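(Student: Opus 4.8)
The plan is to make the double cover completely explicit and reduce the whole statement to the question of whether $F|_D$ is a perfect square. Writing $B=\{F=0\}$ with $F\in H^0(\pp^2,\oo_{\pp^2}(6))=H^0(\pp^2,\oo_{\pp^2}(3)^{\otimes 2})$, I would realize $X$ as the hypersurface $\{w^2=F\}$ inside the total space of $\oo_{\pp^2}(3)$, where $w$ denotes the tautological fibre coordinate, a global section of $\pi^*\oo_{\pp^2}(3)$. Restricting to $D$ gives $\pi^{-1}(D)=\{w^2=F|_D\}\to D$, a double cover branched along $B\cdot D$; here $F|_D$ is a nonzero section of $\oo_D(6):=\oo_{\pp^2}(6)|_D$ precisely because $D$ shares no component with $B$. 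The first step is then the elementary observation that this double cover is trivial (i.e. $\pi^{-1}(D)$ splits into two components each mapping isomorphically onto $D$) if and only if $F|_D=s^2$ for some $s\in H^0(D,\oo_D(3))$: if $F|_D=s^2$, then $w^2-F|_D=(w-s)(w+s)$ exhibits the splitting, and conversely restricting the section $w$ to one of the two components identified with $D$ produces the desired $s$.

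The second step converts the existence of such an $s$ on $D$ into the existence of a plane cubic. Here I would use the restriction exact sequence
\[
0\to \oo_{\pp^2}(3-\deg D)\to \oo_{\pp^2}(3)\to \oo_D(3)\to 0,
\]
which is valid since $D$ is a plane curve, hence cut out by a single form of degree $\deg D$, together with the vanishing $H^1(\pp^2,\oo_{\pp^2}(k))=0$ for every $k\in\zz$. This vanishing is special to projective space and, crucially, independent of $\deg D$; it yields surjectivity of the restriction map $H^0(\pp^2,\oo_{\pp^2}(3))\to H^0(D,\oo_D(3))$. Thus every $s$ as above is the restriction $c|_D$ of a cubic form $c$, and I set $C=\{c=0\}$.

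The final step matches squares with the divisor relation. In the direction $(\Leftarrow)$, given a cubic $C$ with $B\cdot D=2\,C\cdot D$, the two sections $F|_D$ and $(c|_D)^2$ of $\oo_D(6)$ have the same zero divisor on $D$, hence differ by a nowhere-vanishing regular function, i.e. by a nonzero constant, since $D$ is projective and connected (any plane curve, being an ample hypersurface of $\pp^2$, is connected). Absorbing a square root of this constant into $c$ keeps $c$ a cubic and yields $F|_D=(c|_D)^2$, so Step~1 gives triviality. In the direction $(\Rightarrow)$, triviality gives $F|_D=s^2=(c|_D)^2$ by Steps~1--2, whence $B\cdot D=\div_D(F|_D)=2\,\div_D(c|_D)=2\,C\cdot D$.

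The hard part will not be any single step but the bookkeeping at the non-generic locus: at the branch points $B\cap D$, and at any singular points of $D$, the fibre product $\pi^{-1}(D)$ may fail to be normal, so the identifications in Step~1 (restricting $w$ to a component $\cong D$, and reading $F|_D=s^2$ at points where $s$ vanishes) must be justified either by passing to the function field of each reduced irreducible component of $D$ or by working over the smooth locus and extending sections across the finitely many bad points. In the case where the lemma is actually applied the curve $D$ is a smooth conic $Q_i$, so these subtleties disappear; for a general reduced $D$ they are routine, and are exactly what the cited reference \cite{Vermeulen} takes care of.
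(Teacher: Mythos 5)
Your argument is the standard one, and since the paper itself gives no proof of this lemma (it is quoted from Vermeulen), that is the right comparison point. Steps 1--2 are correct, and your ``only if'' direction is in fact valid for \emph{every} reduced $D$ not sharing components with $B$: triviality gives $F|_D=s^2$ with $s\in H^0(D,\oo_D(3))$, $s$ lifts to a cubic by the surjectivity of restriction, and lengths of $\oo_{D,q}/(s)$ are additive over the local branches of $D$, so $\operatorname{div}_D(s^2)=2\operatorname{div}_D(s)$ even at singular points. Likewise the ``if'' direction is correct whenever $D$ is smooth at the points of $B\cap D$. This covers everything the paper needs: the lemma is invoked for the union of three smooth conics, and only in the ``only if'' direction.

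The genuine gap is your closing claim that the singular case is ``routine'' bookkeeping. For the ``if'' direction it is not, and in fact that implication is \emph{false} for singular $D$ under the natural reading of $B\cdot D=2C\cdot D$ as $0$-cycles on $\pp^2$. The faulty step is ``$F|_D$ and $(c|_D)^2$ have the same zero divisor, hence differ by a nowhere-vanishing function'': at a singular point $p$ of $D$ the cycle only records the \emph{sum} of the intersection multiplicities over the local branches, so equality of cycles does not make the ratio a unit at $p$. Concretely, let $D$ be an irreducible nodal cubic with node $p$, let $C$ be a cubic through $p$ transversal to both branches and meeting $D$ transversally at seven further points $q_1,\dots,q_7$, and let $B$ be a smooth sextic meeting one branch of $D$ at $p$ with multiplicity $1$, the other with multiplicity $3$, and having contact exactly $2$ with $D$ at each $q_i$. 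Such $B$ exists: on the normalization $\nu:\pp^1\to D$ take the section of $\nu^*\oo_D(6)\cong\oo_{\pp^1}(18)$ with divisor $\tilde p_1+3\tilde p_2+2\tilde q_1+\cdots+2\tilde q_7$; it vanishes at both points over $p$, hence descends to $H^0(D,\oo_D(6))$, lifts to a sextic form, and a general lift is smooth. Then $B\cdot D=4p+2q_1+\cdots+2q_7=2\,C\cdot D$ as cycles, yet $\pi^{-1}(D)\to D$ is not trivial: over the branch where $F$ vanishes to order $3$ the cover germ is $w^2=t^3\cdot(\text{unit})$, an irreducible cuspidal germ, so $\pi^{-1}(D)$ cannot contain two components mapping isomorphically onto $D$. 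So the ``if'' direction genuinely requires $D$ to be smooth at $B\cap D$ (or the divisor equality to be read branchwise on the normalization), and your deferral of the singular case to the reference cannot be carried out as stated.
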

\end{example}

\begin{example}[Case $S_{4,1,1}$]\label{S411}
Let $X$ be a K3 surface with $\Cl(X)\cong S_{4,1,1}$, whose intersection matrix is
\[
\left[
 \begin{array}{rrr}
  -32 & 0 & 4\\
   0 & -2 & 2\\
  4 & 2 & -2
 \end{array}
 \right].
\]
By Proposition \ref{eff-nef} the classes of the $(-2)$-curves can be taken to be:
\[
f_1=e_2,\quad f_2=e_3,\quad f_3=e_1+3e_2+4e_3.
\]
The Hilbert basis of the nef cone contains the ample class $h:=e_1+4e_2+5e_3$, 
with $h^2=6$ and $h\cdot f_i=2$ for $i=1,2,3$, and three classes of elliptic fibrations 
$h_1=e_2+e_3$, $h_2=e_1+3e_2+5e_3$ and $h_3=e_1+4e_2+4e_3$.

\begin{theorem}\label{S411}
Let $X$ be a K3 surface with $\Cl(X)\cong S_{4,1,1}$.  
Then $X$ can be defined as the zero set in $\pp^4$ of two equations of the following form 
\[
 f_2(x_0,\dots, x_4)=0,\ \ell_1\ell_2\ell_3(x_0,\dots,x_3)+x_4g_2(x_0,\dots,x_4)=0,
\]
where  $f_2,g_2$ are homogeneous of degree $2$ and $\ell_1,\ell_2,\ell_3$ are homogeneous 
of degree $1$.
The Cox ring of a very general $X$ as before is isomorphic to 
$\cc[s_1,s_2,s_3,t_1,t_2,t_3,t]/I$, where the degrees of the generators are given by the columns of the matrix
\[
\left(
\begin{array}{ccccccccc}
0 & 0 & 1 & 0 & 1 & 1& 1\\
1 & 0 & 3 & 1 & 3 & 4& 4\\
0 & 1 & 4 & 1 & 5 & 4& 5
\end{array}
\right)
\]
and the ideal $I$ is generated by the polynomials:
\[
 f_2(s_1s_2s_3,s_1t_2,s_2t_3,s_3t_1,t),\ t_1t_2t_3+g_2(s_1s_2s_3,s_1t_2,s_2t_3,s_3t_1,t).
\]
\end{theorem}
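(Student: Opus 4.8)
The plan is to realise $X$ as a $(2,3)$ complete intersection in $\pp^4$ via the ample class $h$ and then to read the two relations of $R(X)$ off the geometry of this embedding, following the strategy of Theorem \ref{S1}.

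First I would show that $|h|$ defines an embedding. Since $h=e_1+4e_2+5e_3$ is primitive with $h^2=6$, is not linearly equivalent to $2B$, and since a direct check on the elliptic classes (one computes $h\cdot h_1=h\cdot h_2=h\cdot h_3=4$) shows that no smooth elliptic curve $E$ satisfies $h\cdot E=2$, Proposition \ref{hyp} shows $h$ is not hyperelliptic; together with Proposition \ref{lsk3} this gives that $|h|$ is base point free and very ample, so $\varphi_{|h|}\colon X\hookrightarrow \pp^4=\pp(H^0(h))$ has image of degree $h^2=6$. The key structural input is the factorisation of a basis of $H^0(h)$: each elliptic class decomposes as a sum of two of the $(-2)$-curve classes, namely $h_1=f_1+f_2$, $h_2=f_2+f_3$, $h_3=f_1+f_3$, so that $H^0(h_i)$ is spanned by the product of the two corresponding curve sections together with the generator $t_i$. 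From this one reads off that, for very general $X$, $H^0(h)$ has as a basis the five degree-$h$ products $s_1s_2s_3,\ s_1t_2,\ s_2t_3,\ s_3t_1,\ t$, which I take as the homogeneous coordinates $x_0,\dots,x_4$ on $\pp^4$.

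Next I would produce the equations by dimension counting. Riemann--Roch gives $h^0(2h)=14<15=\dim \Sym^2 H^0(h)$, so for very general $X$ the multiplication map $\Sym^2 H^0(h)\to H^0(2h)$ has a one-dimensional kernel, yielding a unique quadric $f_2(x_0,\dots,x_4)=0$ through $X$; the same map is then surjective, so the section $t_1t_2t_3\in H^0(h_1+h_2+h_3)=H^0(2h)$ equals $-g_2(x_0,\dots,x_4)$ for a quadric $g_2$, which is exactly the relation $t_1t_2t_3+g_2=0$ in $R(X)$. A further count, $h^0(3h)=29<35=\dim\Sym^3 H^0(h)$ with the quadric accounting for a $5$-dimensional space of cubic syzygies, produces a genuine cubic, so that $X$ is the complete intersection of the quadric with a cubic (degree $2\cdot3=6=h^2$). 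The special shape of the cubic comes from the identity
\[
x_1x_2x_3=(s_1t_2)(s_2t_3)(s_3t_1)=s_1s_2s_3\cdot t_1t_2t_3=x_0\,t_1t_2t_3=-x_0\,g_2(x),
\]
so that $x_1x_2x_3+x_0g_2(x)$ vanishes on $X$; up to relabelling the distinguished coordinate this is the announced form $\ell_1\ell_2\ell_3+x_4g_2$, which establishes the first assertion.

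For the presentation, Theorem \ref{main} already gives that $R(X)$ is generated in the seven listed degrees, the generators being the defining sections $s_1,s_2,s_3$ of the $(-2)$-curves, the sections $t_1,t_2,t_3$ completing $\{s_as_b\}$ to a basis of each $H^0(h_i)$, and a section $t$ of $H^0(h)$; since the quadric relation $f_2=0$ and the relation $t_1t_2t_3+g_2=0$ both hold in $R(X)$, there is a surjection $\cc[s_1,s_2,s_3,t_1,t_2,t_3,t]/I\to R(X)$. To finish I would show $I$ is prime exactly as in the proof of Theorem \ref{S1}: writing $X_1=V(f_2)$ and $X_2=V(f_2,\,t_1t_2t_3+g_2)$ in $\cc^7$, the linear systems cut out by the monomials of the two defining polynomials have no fixed components and are not composed with a pencil for very general $f_2,g_2$, so Bertini's theorem gives that each $X_i$ is irreducible. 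Hence $\cc[s_1,\dots,t]/I$ is an integral domain which is a complete intersection of Krull dimension $7-2=5$; as it surjects onto the $5$-dimensional domain $R(X)$, the surjection is an isomorphism. I expect the main obstacle to be the verification of the Bertini hypotheses for the system attached to the second polynomial on $X_1$ --- the absence of fixed components and that it is not composed with a pencil --- which is the technical heart, just as in the $S_1$ case.
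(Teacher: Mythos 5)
Your proposal is correct and, in its overall architecture, matches the paper's proof: the embedding by $|h|$ as a $(2,3)$ complete intersection in $\pp^4$, the seven generators $s_1,s_2,s_3,t_1,t_2,t_3,t$ obtained from Theorem \ref{main} together with the decompositions $h_1=f_1+f_2$, $h_2=f_2+f_3$, $h_3=f_1+f_3$, $h=f_1+f_2+f_3$, the two relations, and the endgame (primality of $I$ via Bertini exactly as in Theorem \ref{S1}, then a surjection of five-dimensional affine domains) are all the same. The one genuinely different step is how you obtain the special cubic and the relation $t_1t_2t_3+g_2=0$: you run the paper's argument in reverse. The paper argues geometrically, using $h=f_i+h_j$ and $2h=h_1+h_2+h_3$ to exhibit three reducible hyperplane sections, placing the three conics in the hyperplane $\{x_4=0\}$ with $x_4=s_1s_2s_3$ and the three elliptic quartics on the quadric $\{g_2=0\}$, deducing that the cubic has the form $\ell_1\ell_2\ell_3+x_4g_2$, and only then reading off the Cox relation by identifying $\ell_1,\ell_2,\ell_3$ with $s_1t_2,s_2t_3,s_3t_1$ and cancelling the factor $s_1s_2s_3$ in the domain $R(X)$. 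You instead first produce the relation $t_1t_2t_3=-g_2(x)$ from surjectivity of $\Sym^2H^0(h)\to H^0(2h)$ and then recover the cubic from the identity $x_1x_2x_3=x_0\,t_1t_2t_3$; this is arguably cleaner, since it yields the equation and the Cox relation simultaneously.

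Two of your steps, however, need more than you give them. First, your justification of the surjectivity of $\Sym^2H^0(h)\to H^0(2h)$ is circular as written: a one-dimensional kernel is equivalent to surjectivity (the dimensions differ by exactly one), and neither follows from very generality alone; what you actually need is projective normality of the embedding by a base-point-free, non-hyperelliptic ample system on a K3 surface (Saint-Donat), the same classical input behind the complete-intersection fact that the paper also quotes without proof, so this is fixable by citation. Second, the statement that $s_1s_2s_3,\,s_1t_2,\,s_2t_3,\,s_3t_1,\,t$ form a basis of $H^0(h)$ is not something one just ``reads off'': it is the main computational point of the paper's proof, established by evaluating a putative linear relation at a point of $f_1\cap f_2$ and using that, for very general $X$, the three conics have no common point and that $t_1$ cannot vanish at such a point (otherwise $|h_1|$ would have a base point). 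Without this verification your coordinates $x_0,\dots,x_4$ are not known to be coordinates and the count of seven generators is incomplete; the argument poses no obstruction, but it must be supplied.
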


\begin{proof}
Observe that the class $h$ is ample and not hyperelliptic,
thus it defines an embedding of $X$ in $\pp^4$ as complete intersection of a quadric and a cubic hypersurface.
Observe that $h=f_1+f_2+f_3$. 
\[
h=f_1+h_2=f_2+h_3=f_3+h_1=f_1+f_2+f_3,\quad 2h=h_1+h_2+h_3.
\]
This means that $X$ has three reducible hyperplane sections, which are union of 
a conic and an elliptic curve of degree $4$. 
The three conics are contained in a hyperplane $H$ 
and the three elliptic curves are contained in a quadric $K$.
Up to a coordinate change we can assume that $H=\{x_4=0\}$. 
Each conic is contained in a plane, 
thus $X\cap H=\{x_4=h_2=\ell_1\ell_2\ell_3=0\}$, where $h_2, \ell_i\in \cc[x_0,x_1,x_2,x_3]$ 
are homogeneous, $\deg(h_2)=2$ and $\deg(\ell_i)=1$. 
This implies that $X$ has an equation as in the statement
with $f_2=h_2\, ({\rm mod }\ x_4)$. Observe that the equation of the quadric $K$ 
is $g_2=0$.
By Theorem \ref{main} the Cox ring $R(X)$ is generated in the following degrees: 
\[
f_1,\ f_2,\ f_3,\ h,\ h_1,\ h_2,\ h_3. 
\]
Let $s_i$ be a generator of $H^0(f_i)$ for $i=1,2,3$. 
Moreover let  $s_1s_2, t_1$ be a basis of $H^0(h_1)$,
 $s_2s_3, t_2$ be a basis of $H^0(h_2)$
and $s_1s_3, t_3$ be a basis of $H^0(h_3)$.
Observe that $h^0(h)=5$ and $H^0(h)$ contains the 
subspace $S$ generated by $s_1s_2s_3,s_1t_2,s_2t_3,s_3t_1$.
Given a linear combination of such sections with coefficients 
$\alpha_1,\dots,\alpha_4\in \cc$ and evaluating it 
at a point $p$ where $s_1(p)=s_2(p)=0$ gives $\alpha_4s_3t_1(p)=0$.
By the generality assumption the three conics $s_i=0$ do not have a common intersection,
thus $s_3(p)\not=0$. Moreover $t_1(p)\not=0$ since $t_1$ defines a section 
of the elliptic fibration associated to $h_1$ which is distinct from $s_1s_2=0$. Thus $\alpha_4=0$.
The same  argument  for the pairs $s_1,s_3$ and $s_2,s_3$ gives $\alpha_2=\alpha_3=0$.
Thus $\dim(S)=4$.  
This implies that a  set of generators for $R(X)$ is given by $s_1, s_2, s_3, t_1, t_2, t_3,t$,
where $t\in H^0(h)$ with $t\not\in S$.
Among these generators there are relations of the following form:
\[
f_2(s_1s_2s_3,s_1t_2,s_2t_3,s_3t_1,t)=0,\ t_1t_2t_3+g_2(s_1s_2s_3,s_1t_2,s_2t_3,s_3t_1,t)=0,
\]
where the first relation comes from the equation of the quadric containing $X$, while 
the second relation comes from the equation of the cubic containing $X$ observing that
we can assume $\ell_1=s_1t_2, \ell_2=s_2t_3, \ell_3=s_3t_1$ and $x_4=s_1s_2s_3$.
It can be proved with the same type of argument used in the proof of 
Theorem \ref{S1} that the ideal $I$ is prime for general $f_2, g_2$. 
Thus $\cc[s_1,s_2,s_3,t_2,t_2,t_3,t]/I=R[X]$.
\end{proof}
\end{example}

 \begin{example}[Cases $S_{1,1,1}$ and $S_{1,1,2}$]\label{double}
K3 surfaces with Picard lattices 
 $U\oplus A_1\cong S_{1,1,1}$ and $U(2)\oplus A_1\cong S_{1,1,2}$ 
 carry a non-symplectic involution acting trivially on their Picard group. 
 A presentation of their Cox rings has been computed in \cite[Proposition 6.6]{A.H.L}
 (see Table \ref{TableGen}).
We observe that K3 surfaces with $\Cl(X)\cong U\oplus A_1$ 
are special for us since they are the only Mori dream K3 surfaces 
of Picard number three having an elliptic fibration with a section. 
In this case we can not apply  Test 1 and Test 2  in Section \ref{sec-gen},  
since the linear system associated to a nef divisor 
on such surface is not always base point free, see Proposition \ref{lsk3}.
\end{example}

 \section{Tables}\label{tables}
 This section contains the tables describing the effective and nef cone (Tables 1 and 2),
 and the degrees of a generating set of $R(X)$ (Table 3) for
 Mori dream K3 surfaces of Picard number three.
 We recall that $\Cl(X)$ denotes the Picard lattice of the surface,
 $E(X)$ is the set of generators of the extremal rays of the effective cone 
 (i.e. the set of classes of the $(-2)$-curves), 
 $\BEff(X)$ is the Hilbert  basis 
 of the effective cone, $N(X)$ is the set of generators of the 
 extremal rays of the nef cone and 
 $\BNef(X)$ is the Hilbert basis of the nef cone.
 
\newpage
 \ \\
 
\begin{longtable}{|c|c|c|c|c|c|c|}
 \hline
${N}^\circ$&$\Cl(X)$& $E(X)$& $\BEff(X)$ & $N(X)$ &  $\BNef(X)$ \\
\hline\hline
1& $S_1$ & 
 $\begin{array}{c}
(0,  1,  0),\\
    (0,  0,  1),\\
    (1,  -2,  0),\\
    (1,  0,  -2),\\
    (2,  -3,  -2),\\
    (2,  -2,  -3)
\end{array}$  & 
 $\begin{array}{c}
 E(X)\\
 \cup\\
 \{(1,-1,-1)\} 
 \end{array}$ & 
$\begin{array}{c}
( 1, 0,  0),\\
    ( 2,  -3, 0),\\
    ( 2,  0,  -3),\\
    ( 4,  -6,  -3),\\
    (4,  -3,  -6),\\
    (5,  -6,  -6)
\end{array}$& 
$\begin{array}{c}
( 1, -1,  -1), ( 1,  -1, 0),\\
    ( 1,  0,  -1), ( 1,  0,  0),\\
    (2,  -3,  -1), (2,  -3,  0),\\
(2, -1, -3), (2, 0, -3),\\
(3, -4, -3), (3, -3, -4),\\
(4, -6, -3), (4, -3, -6),\\
(5, -6, -6)
\end{array}$\\
\hline
2&$S_2$ & 
$\begin{array}{c}
(0,  1,  0),\\
    (0,  0,  1),\\
    (1, -5,  -3),\\
    (1,  -3, -5),\\
    (2, -9, -8),\\
    (2, -8, -9)
\end{array}$ &
$\begin{array}{c}
 E(X)\\
 \cup\\
 \{(1,-4,-4)\} 
 \end{array}$ & 
 $\begin{array}{c}
( 1,0,  0),\\
    ( 5,  -24, -12),\\
    ( 5,  -12,  -24),\\
    ( 13,  -60,  -48),\\
    (13,  -48, -60),\\
    (17,  -72,  -72)
\end{array}$& $N_{2}(X)$\\
\hline
3&$S_3$ & $\begin{array}{c}
(0,  1,  0),\\
    (0,  0,  1),\\
    (1, -3,  -2),\\
    (1,  -2, -3)
\end{array}$  &
$E(X)$ &
$\begin{array}{c}
( 1, 0,  0),\\
    ( 3,  -8, -4),\\
    ( 3,  -4,  -8),\\
    ( 5,  -12,  -12)
\end{array}$ & $\begin{array}{c}
(1, -2, -2), (1, -2, -1),\\
    (1, -1, -2), (1, -1, -1),\\
    (1, 0, 0), (2, -5, -4),\\
    (2, -5, -3), (2, -4, -5),\\
    (2, -3, -5), (3, -8, -4),\\
    (3, -7, -7), (3, -4, -8),\\
    (5, -12, -12)
\end{array}$ \\
\hline
4&$S_4$ & $\begin{array}{c}
(0,  1,  0),\\
    (0,  -1,  -1),\\
    (1, -1,  1),\\
    (2,  1,  3)
\end{array}$  &
$E(X)$ &
$\begin{array}{c}
( 3, -1,  -2),\\
    ( 7,  -9, 2),\\
    ( 13,  9,  18),\\
    ( 17,  1,  22)
\end{array}$& $\begin{array}{c}
(1, -1, 0), (1, 0, 0),\\
    (1, 0, 1), (2, -2, 1),\\
    (2, -1, -1), (2, 1, 2),\\
  (3, -1, -2), (3, -1, 3),\\
    (3, 1, 4), (3, 2, 4),\\
    (4, -5, 1), (4, 0, 5),\\
    (7, -9, 2), (8, 5, 11),\\
    (10, 1, 13), (13, 9, 18),\\
    (17, 1, 22)
\end{array}$\\
\hline
5&$S_5$ & $\begin{array}{c}
(0,  1,  0),\\
    (1,  -1,  -2),\\
    (0, 0,  1),\\
    (1,  -2,  -1)
\end{array}$ &
$E(X)$ &
$\begin{array}{c}
( 1, 0,  0),\\
    ( 3,  -4, -4),\\
    ( 3,  -4,  -2),\\
    ( 3,  -2,  -4)
\end{array}$ & $\begin{array}{c}
 (1, -1, -1), (1, 0, 0),\\
    (2, -2, -1), (2, -1, -2),\\
    (3, -4, -4), (3, -4, -3),\\
    (3, -4, -2), (3, -3, -4),\\
    (3, -2, -4)
\end{array}$ \\
\hline
6&$S_6$
  & $\begin{array}{c}
 (0,  -1,  0),\\
 (0,  0, -1),\\
    (1, 3,  1),\\
    (2, 3, 5),\\
(2,  5,  4),\\
    (3, 6, 7)
\end{array}$ &
$\begin{array}{c}
 E(X)\\
 \cup\\
 \{(1,2,2)\} 
 \end{array}$ & 
$\begin{array}{c}
( 3, -1,  -2),\\
    ( 5,  13, 4),\\
    ( 17,  31,  40),\\
    ( 19,  23,  46),\\
    (25,  65,  42),\\
    (41,  89,  90)
\end{array}$ & $N_{6}(X)$\\
\hline
\addlinespace
\addlinespace
\caption{Effective and Nef cone.}
\label{eff&nef}
\end{longtable}

\newpage
 \ \\
\addtocounter{table}{-1}
\begin{longtable}{|c|c|c|c|c|c|c|}
\hline
${N}^\circ$&$\Cl(X)$& $E(X)$& $\BEff(X)$ & $N(X)$ &  $\BNef(X)$ \\
\hline\hline
7& $S'_{4,1,2}$
 & $\begin{array}{c}
    (0, 1, 0),\\
    (2, 3, 1),\\
    (0, 1, 1),\\
    (2, 3, 2)
\end{array}$ & 
$\begin{array}{c}
 E(X)\\
 \cup\\
 \{(1,2,1)\} 
 \end{array}$ & 
$\begin{array}{c}
(0, 2, 1),\\
    (2, 4, 1),\\
    (2, 4, 3),\\
    (4, 6, 3)
\end{array}$& $\begin{array}{c}
(0, 2, 1), (1, 2, 1),\\
    (1, 3, 1), (1, 3, 2),\\
    (2, 4, 1), (2, 4, 3),\\
    (3, 5, 2), (3, 5, 3),\\
    (4, 6, 3)
\end{array}$\\
\hline
8&$S_{4,1,1}$
  & $\begin{array}{c}
(0,  1,  0),\\
    (0,  0,  1),\\
    (1, 3,  4)
\end{array}$ & 
$E(X)$ &
$\begin{array}{c}
( 0,1,  1),\\
    ( 1,  3, 5),\\
    ( 1,  4,  4)
\end{array}$& $\begin{array}{c}
(0, 1, 1),(1, 3, 5),\\
    (1, 4, 4),(1, 4, 5)
\end{array}$\\
\hline
9&$S_{5,1,1}$
  & $\begin{array}{c}
(0,  1,  0),\\
    (0,  0,  1),\\
    (1,  4,  5),\\
    (4, 15, 24)
\end{array}$  &
$E(X)$ &
$\begin{array}{c}
(0, 1, 1),\\
    (1, 5, 5),\\
    (4, 15, 25),\\
    (5, 19, 29)
\end{array}$ & $\begin{array}{c}
(0, 1, 1), (1, 4, 6),\\
    (1, 5, 5), (1, 5, 6),\\
    (2, 8, 13), (3, 12, 17),\\
    (4, 15, 25), (5, 19, 29),\\
    (5, 19, 30)
\end{array}$ \\
\hline
10&$S_{6,1,1}$
  & $\begin{array}{c}
(0,  0,  1),\\
    (1,  5,  6),\\
    (2,  9,  14),\\
    (0,1,0)
\end{array}$ &
$E(X)$ &
$\begin{array}{c}
(0, 1, 1),\\
    (1, 6, 6),\\
    (2, 9, 15),\\
    (3, 14, 20)
\end{array}$& $\begin{array}{c}
(0, 1, 1), (1, 5, 7),\\
    (1, 5, 8), (1, 6, 6),\\
    (1, 6, 7), (2, 9, 15),\\
    (2, 10, 13), (3, 14, 20),\\
    (3, 14, 21)
\end{array}$\\
\hline
11&$S_{7,1,1}$
 & $\begin{array}{c}
    (0,  0,  1),\\
    (1,  6,  7),\\
    (0,  1,  0),\\
    (3, 16, 24),\\
    (4, 21, 34),\\
    (6, 33, 46)
\end{array}$ &
$\begin{array}{c}
 E(X)\\
 \cup\\
 \{(2,11,16)\} 
 \end{array}$ & 
$\begin{array}{c}
 (0, 1, 1),\\
    (1, 7, 7),\\
    (4, 21, 35),\\
    (7, 37, 58),\\
    (7, 39, 53),\\
    (9, 49, 70)
\end{array}$& $\begin{array}{c}
(0, 1, 1), (1, 6, 8),\\
    (1, 6, 9), (1, 7, 7),\\
    (1, 7, 8), (2, 11, 16),\\
    (2, 11, 17), (2, 11, 18),\\
    (2, 12, 15), (3, 16, 25),\\
    (3, 16, 26), (3, 17, 23),\\
    (4, 21, 35), (4, 22, 31),\\
    (4, 23, 30), (5, 27, 40),\\
    (5, 28, 38), (6, 32, 49),\\
    (7, 37, 58), (7, 37, 59),\\
    (7, 38, 55), (7, 39, 53),\\
    (8, 43, 64), (9, 49, 70),\\
    (10, 55, 77)
\end{array}$\\
\hline
12&$S_{8,1,1}$  & $\begin{array}{c}
    (0, 0, 1),\\
    (1, 6, 9),\\
    (1, 7, 8),\\
    (0, 1, 0)
\end{array}$  & 
$E(X)$ &
$\begin{array}{c}
(0, 1, 1),\\
    (1, 6, 10),\\
    (1, 8, 8),\\
    (2, 13, 17)
\end{array}$& $\begin{array}{c}
 (0, 1, 1), (1, 6, 10),\\
    (1, 7, 9), (1, 7, 10),\\
    (1, 8, 8), (1, 8, 9),\\
    (2, 13, 17), (2, 13, 18),\\
    (2, 14, 17)
\end{array}$\\
\hline
\addlinespace
\addlinespace
\caption{Effective and Nef cone.}
\end{longtable}

 \newpage
 \ \\
  
\addtocounter{table}{-1}
\begin{longtable}{|c|c|c|c|c|c|c|}
 \hline
${N}^\circ$&$\Cl(X)$& $E(X)$& $\BEff(X)$ & $N(X)$ &  $\BNef(X)$ \\
\hline\hline
13&$S_{10,1,1}$
 & $\begin{array}{c}
    (1,  9, 10),\\
    (4, 32, 43),\\
    (0,  0,  1),\\
    (3, 23, 34),\\
    (2, 15, 24),\\
    (6, 47, 66),\\
    (4, 33, 42),\\
    (0,  1,  0)
\end{array}$ &  
$\begin{array}{c}
 E(X)\\
 \cup\\
 \{(1,8,11)\} 
 \end{array}$ & 
$\begin{array}{c}
(0, 1, 1),\\
    (1, 10, 10),\\
    (2, 15, 25),\\
    (5, 38, 58),\\
    (5, 42, 52),\\
    (8, 65, 85),\\
    (9, 70, 100),\\
    (10, 79, 109)
\end{array}$& $\begin{array}{c}
(0, 1, 1), (1, 8, 11),\\
    (1, 8, 12), (1, 8, 13),\\
    (1, 9, 11), (1, 10, 10),\\
    (1, 10, 11), (2, 15, 25),\\
    (2, 17, 21), (3, 23, 35),\\
    (3, 26, 31), (4, 31, 45),\\
    (5, 38, 58), (5, 38, 59),\\
    (5, 40, 54), (5, 41, 53),\\
    (5, 42, 52), (7, 54, 79),\\
    (7, 55, 77), (8, 65, 85),\\
    (9, 70, 100), (9, 72, 97),\\
    (9, 74, 95), (10, 79, 109),\\
    (13, 102, 143)
\end{array}$\\
\hline
14&$S_{12,1,1}$
& $\begin{array}{c}
(0,  1,  0),\\
    (1,  9, 14),\\
    (2, 19, 26),\\
    (0,  0,  1),\\
    (1, 11, 12),\\
    (2, 20, 25)
\end{array}$ &
$\begin{array}{c}
 E(X)\\
 \cup\\
 \{(1,10,13)\} 
 \end{array}$ & 
$\begin{array}{c}
(0, 1, 1),\\
    (1, 9, 15),\\
    (1, 12, 12),\\
    (3, 28, 40),\\
    (3, 31, 37),\\
    (4, 39, 51)
\end{array}$ & $\begin{array}{c}
(0, 1, 1), (1, 9, 15),\\
    (1, 10, 13), (1, 10, 14),\\
    (1, 10, 15), (1, 11, 13),\\
    (1, 12, 12), (1, 12, 13),\\
    (2, 19, 27), (2, 21, 25),\\
    (3, 28, 40), (3, 28, 41),\\
    (3, 29, 39), (3, 30, 38),\\
    (3, 31, 37), (3, 32, 37),\\
    (4, 39, 51), (5, 48, 65),\\
    (5, 50, 63)
\end{array}$ \\
\hline
15&$S_{1,2,1}$
  & $\begin{array}{c}
(1, 0, 0),\\
    (0, 0, 1),\\
    (0, 1, 1)
\end{array}$  &
$E(X)$ &
$\begin{array}{c}
    (0, 1, 2),\\
    (4, 3, 8),\\
    (4, 5, 8)
\end{array}$& $\begin{array}{c}
(0, 1, 2), (1, 1, 2),\\
    (2, 2, 5), (2, 3, 5),\\
    (4, 3, 8), (4, 5, 8)
\end{array}$\\
\hline
16&$S_{1,3,1}
$
 & $\begin{array}{c}
          (1, 0, 0),\\
    (0, 0, 1),\\
    (0, 1, 2)
\end{array}$ &
$E(X)$ &
$\begin{array}{c}
    (0, 1, 3),\\
    (1, 1, 2),\\
    (2, 1, 4)
\end{array}$& $\begin{array}{c}
(0, 1, 3), (1, 1, 2),\\
    (1, 1, 3), (2, 1, 4)
\end{array}$\\
\hline
17&$S_{1,4,1}$  & $\begin{array}{c}
     (1, 0, 0),\\
    (0, 1, 3),\\
    (0, 0, 1),\\
    (3, 3, 8)
\end{array}$ & 
$E(X)$ &
$\begin{array}{c}
(0, 1, 4),\\
    (4, 3, 8),\\
    (8, 3, 16),\\
    (24, 29, 80)
\end{array}$& $\begin{array}{c}
(0, 1, 4), (1, 1, 3),\\
    (1, 1, 4), (2, 1, 4),\\
    (2, 1, 5), (2, 3, 9), \\
    (3, 2, 6), (4, 3, 8),\\
    (4, 4, 11), (4, 5, 14),\\
    (5, 2, 10), (7, 8, 22),\\
    (8, 3, 16), (14, 17, 47),\\
    (24, 29, 80)
\end{array}$\\
\hline
\addlinespace
\addlinespace
\caption{Effective and Nef cone.}
\end{longtable}

\newpage
 \ \\
  
\addtocounter{table}{-1}
\begin{longtable}{|c|c|c|c|c|c|c|}
\hline
${N}^\circ$&$\Cl(X)$& $E(X)$& $\BEff(X)$ & $N(X)$ &  $\BNef(X)$ \\
\hline\hline
18&$S_{1,5,1}$
 & $\begin{array}{c}
( 1,  0,  0),\\
    ( 0,  1,  4),\\
    ( 0,  0,  1),\\
    ( 4,  3, 10),\\
    ( 5,  6, 21),\\
    (16, 16, 55)
\end{array}$ & 
$\begin{array}{c}
 E(X)\\
 \cup\\
 \{(2,2,7)\} 
 \end{array}$ &
$\begin{array}{c}
(0, 1, 5),\\
    (5, 3, 10),\\
    (5, 7, 25),\\
    (10, 3, 20),\\
    (20, 19, 65),\\
    (190, 197, 680)
\end{array}$ & $\begin{array}{c}
 (0, 1, 5), (1, 1, 4),\\
    (1, 1, 5), (1, 2, 8),\\
    (2, 1, 4), (2, 1, 5),\\
    (2, 1, 6), (2, 2, 7),\\
    (2, 3, 11), (3, 1, 6),\\
    (3, 2, 7), (5, 3, 10),\\
    (5, 7, 25), (6, 2, 13),\\
    (6, 5, 17), (7, 8, 28),\\
    (9, 9, 31), (10, 3, 20),\\
    (13, 12, 41), (14, 15, 52),\\
    (20, 19, 65), (25, 25, 86),\\
    (30, 31, 107), (46, 47, 162),\\
    (51, 53, 183), (118, 122, 421),\\
    (190, 197, 680)
\end{array}$ \\
\hline
19&$S_{1,6,1}$  & $\begin{array}{c}
(1, 0, 0),\\
    (1, 1, 4),\\
    (0, 0, 1),\\
    (0, 1, 5)
\end{array}$ & 
$E(X)$ &
$\begin{array}{c}
    (0, 1, 6),\\
    (2, 1, 4),\\
    (4, 1, 8),\\
    (4, 7, 32)
\end{array}$& $\begin{array}{c}
(0, 1, 6), (1, 1, 5),\\
    (1, 1, 6), (1, 2, 10),\\
    (2, 1, 4), (2, 1, 5),\\
    (2, 1, 6), (2, 1, 7),\\
    (2, 2, 9), (2, 3, 14),\\
    (2, 4, 19), (3, 1, 6),\\
    (3, 1, 7), (3, 4, 18),\\
    (3, 5, 23), (4, 1, 8),\\
    (4, 7, 32)
\end{array}$\\
\hline
20&$S_{1,9,1}$
 & $\begin{array}{c}
 (2,  1,  6),\\
(0,  1,  8),\\
    (0,  0,  1),\\
    (1,  0,  0),\\
    (4,  5, 34),\\
    (3,  6, 43),\\
    (7,  6, 39),\\
    (5,  8, 56),\\
    (8,  8, 53)
\end{array}$ & 
$\begin{array}{c}
 E(X)\\
 \cup\\
 \{(1,1,7),(2,3,21), (3,3,20)\} 
 \end{array}$ &
$\begin{array}{c}
(0, 1, 9),\\
    (3, 1, 6),\\
    (3, 7, 51),\\
    (6, 1, 12),\\
    (9, 7, 45),\\
    (9, 13, 90),\\
    (12, 13, 87),\\
    (42, 73, 516),\\
    (78, 73, 480)
\end{array}$ & $N_{20}(X)$  \\
\hline
21&$S_{1,1,1}$
  & $\begin{array}{c}
(-1,  0,  0),\\
    (0,  1,  0),\\
    (1,  0,  1)
\end{array}$ & 
$E(X)$ &
$\begin{array}{c}
( 1, 1,  1),\\
    ( 1,  2, 2),\\
    ( 2,  3,  4)
\end{array}$& $\begin{array}{c}
( 1, 1,  1),\\
    ( 1,  2, 2),\\
    ( 2,  3,  4)
\end{array}$\\
\hline
22&$S_{1,1,2}$
  & $\begin{array}{c}
(0,  -1, - 1),\\
    (1,  2,  2),\\
    (0, 3,  2)
\end{array}$ & 
$E(X)$ &
$\begin{array}{c}
( 0,2,  1),\\
    ( 1,  1, 1),\\
    ( 1,  4,  3)
\end{array}$& $\begin{array}{c}
( 0,2,  1),\\
    ( 1,  1, 1),\\
    ( 1,  4,  3)
\end{array}$\\
\hline
\addlinespace
\addlinespace
\caption{Effective and Nef cone.}
\end{longtable}

\newpage
 
  \ \\

\addtocounter{table}{-1}
\begin{longtable}{|c|c|c|c|c|c|c|}
\hline
${N}^{\circ }$&$\Cl(X)$& $E(X)$& $\BEff(X)$& $N(X)$ &  $\BNef(X)$ \\
\hline\hline
23&$S_{1,1,3}$
  & $\begin{array}{c}
(0,  -2,  -1),\\
    (1,  6, 3),\\
    (2, 3,  2),\\
    (0,  5, 2)
\end{array}$ & 
$E(X)$ &
$\begin{array}{c}
( 0, 3,  1),\\
    ( 3,  9, 5),\\
    ( 3,  24,  11),\\
    ( 6,  12,  7)
\end{array}$ & $\begin{array}{c}
(0, 3, 1), (1, 4, 2),\\
    (1, 9, 4), (2, 6, 3),\\
    (3, 7, 4), (3, 9, 5),\\
    (3, 14, 7), (3, 19, 9),\\
    (3, 24, 11), (4, 9, 5),\\
    (6, 12, 7)
\end{array}$ \\
\hline

24&$S_{1,1,4}$
 & $\begin{array}{c}
(0,  -3, - 1),\\
    (1,  2,  1),\\
    (0, 7, 2),\\
    (1,  12,  4)
\end{array}$ & 
$E(X)$ &
$\begin{array}{c}
( 0, 4,  1),\\
    ( 2,  8, 3),\\
    ( 2,  14,  5),\\
    ( 2,  28,  9)
\end{array}$& $\begin{array}{c}
(0, 4, 1), (1, 6, 2),\\
    (1, 9, 3), (1, 16, 5),\\
    (2, 8, 3), (2, 11, 4),\\
    (2, 14, 5), (2, 21, 7),\\
    (2, 28, 9)
\end{array}$\\
\hline
25&$S_{1,1,6}$
 & $\begin{array}{c}
(0,  5,  1),\\
    (2,  3,  1),\\
    (3, 16, 4),\\
    (1,  0,  0),\\
(4,15,4),\\
(0,1,0)
\end{array}$ & 
$\begin{array}{c}
 E(X)\\
 \cup\\
 \{(1,4,1)\} 
 \end{array}$ &
$\begin{array}{c}
( 0, 6,  1),\\
    ( 3,  3, 1),\\
    ( 3,  6,  1),\\
    ( 3,  21,  5),\\
(6,18,5),\\
(15,66,17)
\end{array}$& 
$N_{25}(X)$
\\
\hline
26&$S_{1,1,8}$
& $\begin{array}{c}
 (3,  4,  1),\\
    (1,  0,  0),\\
    (4,  9,  2),\\
    (0,  1,  0),\\
    (4, 15,  3),\\
    (0,  7,  1),\\
    (3, 16,  3),\\
    (1, 12,  2)
\end{array}$  &
$\begin{array}{c}
 E(X)\\
 \cup\\
 \{(1,6,1),(2,5,1),(2,11,2),(3,10,2)\} 
 \end{array}$ &
$\begin{array}{c}
(0, 8, 1),\\
    (4, 4, 1),\\
    (4, 8, 1),\\
    (4, 28, 5),\\
    (4, 56, 9),\\
    (8, 24, 5),\\
    (20, 40, 9),\\
    (20, 88, 17)
\end{array}$ & $N_{26}(X)$  \\
\hline
\addlinespace
\addlinespace
\caption{Effective and Nef cone.}
\end{longtable}

\newpage
\vspace{0.6cm}

\renewcommand{\arraystretch}{1.1}\setlength{\tabcolsep}{2pt}
\begin{longtable}{|c|c|}
\hline 
 ${N}^\circ$& $\BNef(X)$ \\
\hline\hline
2&$\begin{array}{c}(1, -4, -4), (1, -4, -3), (1, -4, -2), (1, -3, -4), (1, -3, -3), (1, -3, -2),\\
 (1, -2, -4), (1, -2, -3), (1, -2, -2), (1, -2, -1), (1, -1, -2), (1, -1, -1), (1, 0, 0),\\ 
 (2, -9, -7),  (2, -9, -6), (2, -9, -5), (2, -7, -9), (2, -6, -9),(2, -5, -9),(3, -14, -10),\\
  (3, -14, -9),  (3, -14, -8), (3, -14, -7),  (3, -13, -12),(3, -12, -13), (3, -10, -14),\\
   (3, -9, -14),  (3, -8, -14), (3, -7, -14), (4, -19, -11),(4, -19, -10), (4, -18, -15), \\
   (4, -15, -18), (4, -11, -19), (4, -10, -19), (5, -24, -12),(5, -23, -18), (5, -22, -20),\\
    (5, -21, -21), (5, -20, -22), (5, -18, -23), (5, -12, -24),(6, -27, -23), (6, -23, -27), \\
    (7, -32, -26), (7, -30, -29), (7, -29, -30),  (7, -26, -32),(8, -37, -29), (8, -29, -37), \\
    (9, -41, -34), (9, -38, -38), (9, -34, -41),  (10, -46, -37),(10, -37, -46), (11, -47, -46),\\
     (11, -46, -47), (13, -60, -48), (13, -55, -55), (13, -48, -60),(17, -72, -72)\end{array}$\\
     \hline
    6
 & $\begin{array}{c}
    (1, 0, 0), (1, 1, 0), (1, 1, 1), (1, 1, 2), (1, 2, 1), (1, 2, 2),(2, 0, -1), (2, 4, 1),\\
    (2, 5, 2), (2, 5, 3),(3, -1, -2), (3, 4, 7), (3, 5, 7), (3, 7, 2),(3, 7, 6), (4, 8, 9),\\
    (4, 10, 3), (4, 10, 7), (5, 6, 12), (5, 7, 12), (5, 13, 4), (5, 13, 5),(5, 13, 6), (5, 13, 7),\\
    (5, 13, 8), (6, 11, 14), (6, 13, 13), (7, 18, 12), (8, 13, 19), (9, 19, 20), (11, 19, 26), \\
    (11, 22, 25),(11, 24, 24), (12, 15, 29),(13, 25, 30), (14, 25, 33), (15, 28, 35), \\
    (15, 39, 25),(17, 31, 40), (19, 23, 46),(25, 54, 55), (25, 65, 42),(41, 89, 90)
\end{array}$\\
\hline
 20& $\begin{array}{c}(0, 1, 9),(1, 1, 7), (1, 1, 8), (1, 1, 9),(1, 2, 15), (1, 3, 23), (2, 1, 7), (2, 1, 8),\\
    (2, 1, 9),
    (2, 1, 10),
    (2, 3, 21),
    (2, 4, 29), (2, 5, 37),(3, 1, 6), (3, 1, 7), (3, 1, 8),(3, 1, 9),\\
    (3, 1, 10),(3, 2, 13),
    (3, 3, 20),
    (3, 7, 51),
    (4, 1, 8),
    (4, 1, 9),
    (4, 1, 10),
    (4, 1, 11),
    (5, 1, 10),\\
    (5, 1, 11),
    (5, 3, 19),
    (5, 4, 26),
    (5, 9, 64),
    (6, 1, 12),
    (6, 8, 55),
    (7, 5, 32),
    (7, 8, 54),
    (7, 11, 77),\\
    (8, 15, 107),
    (9, 7, 45),
    (9, 13, 90),
    (10, 9, 59),
    (10, 13, 89),
    (10, 17, 120),
    (11, 11, 73),
    (11, 13, 88),\\
    (12, 13, 87),
    (12, 19, 133),
    (13, 23, 163),
    (15, 25, 176),
    (16, 29, 206),
    (17, 15, 98),
    (18, 17, 112),\\
    (18, 31, 219),
    (19, 19, 126),
    (20, 33, 232),
    (21, 37, 262),
    (23, 39, 275),
    (25, 23, 151),
    (26, 25, 165),\\
    (26, 45, 318),
    (29, 51, 361),
    (31, 53, 374),
    (32, 29, 190),
    (33, 31, 204),
    (34, 33, 218),
    (34, 59, 417),\\
    (40, 37, 243),
    (41, 39, 257),
    (42, 73, 516),
    (48, 45, 296),
    (55, 51, 335),
    (56, 53, 349),
    (63, 59, 388),\\
    (78, 73, 480)\end{array}$\\
    \hline 
  25 &
   $\begin{array}{c} (0, 6, 1),
    (1, 4, 1),
    (1, 5, 1),
    (1, 6, 1),
    (1, 9, 2),
    (2, 4, 1),
    (2, 5, 1),
    (2, 6, 1),\\
    (3, 3, 1),
    (3, 4, 1),
    (3, 5, 1),
    (3, 6, 1),
    (3, 7, 2),
    (3, 21, 5),
    (4, 20, 5),
    (5, 19, 5),
    (6, 18, 5),\\
    (7, 36, 9),
    (8, 35, 9),
    (9, 34, 9),
    (11, 51, 13),
    (12, 50, 13),
    (15, 66, 17)
\end{array}$\\
\hline
26&
 $\begin{array}{c} (0, 8, 1),
    (1, 6, 1),
    (1, 7, 1),
    (1, 8, 1),
    (1, 13, 2),
    (1, 20, 3),
    (2, 5, 1),\\
    (2, 6, 1),
    (2, 7, 1),
    (2, 8, 1),
    (2, 11, 2),
    (2, 18, 3),
    (2, 25, 4),
    (2, 32, 5),
    (3, 5, 1),\\
    (3, 6, 1),
    (3, 7, 1),
    (3, 8, 1),
    (3, 10, 2),
    (3, 23, 4),
    (3, 30, 5),
    (3, 37, 6),
    (3, 44, 7),\\
    (4, 4, 1),
    (4, 5, 1),
    (4, 6, 1),
    (4, 7, 1),
    (4, 8, 1),
    (4, 28, 5),
    (4, 35, 6),
    (4, 42, 7),\\
    (4, 49, 8),
    (4, 56, 9),
    (5, 9, 2),
    (5, 27, 5),
    (6, 14, 3),
    (6, 26, 5),
    (7, 19, 4),
    (7, 25, 5),\\
    (8, 13, 3),
    (8, 24, 5),
    (8, 43, 8),
    (9, 18, 4),
    (9, 42, 8),
    (10, 23, 5),
    (10, 41, 8),
    (11, 28, 6),\\
    (11, 40, 8),
    (12, 22, 5),
    (12, 58, 11),
    (13, 27, 6),
    (13, 57, 11),
    (14, 32, 7),
    (14, 56, 11),\\
    (16, 31, 7),
    (16, 73, 14),
    (17, 36, 8),
    (17, 72, 14),
    (20, 40, 9),
    (20, 88, 17)
\end{array}$\\
\hline  
\addlinespace
\addlinespace
\caption{$\BNef(X)$ for $\Cl(X)=S_2$, $S_{6}$, $S_{1,9,1}$, $S_{1,1,6}$ and $S_{1,1,8}$.}
\end{longtable}

 \newpage
 \ \\

\renewcommand{\arraystretch}{1.4}\setlength{\tabcolsep}{2pt}
\begin{longtable}{|c|c|l|}
  \hline
${N}^\circ$&$\Cl(X)$&  Degrees of generators of $R(X)$\\
\hline
\hline
1&$S_1$&$\BEff$\\
\hline
2&$S_2$&$E, \BNef$
\\
\hline
3&$S_3$&$E, \BNef$\\
\hline
4&$S_4$&$E, \BNef$\\ 
\hline
5&$S_5$&$E, \BNef[i], i=1,2,5$\\
\hline
6&$S_6$& 
 $E, \BNef[i], i=1-7, 10, 11^*,12,13, 15, 16, 18-20, 25, 27, 28, 30, 33, 
34, 38, 40^*, 41, 42^*,43^*$\\
\hline
7&$S'_{4,1,2}$&$E, \BNef $\\
\hline
8&$S_{4,1,1}$&$E. \BNef$\\
\hline
9&$S_{5,1,1}$&$E, \BNef$\\
\hline
10&$S_{6,1,1}$&$E, \BNef$\\
\hline
11&$S_{7,1,1}$&$E, \BNef$\\
\hline
12&$S_{8,1,1}$&$E, \BNef$\\
\hline
13&$S_{10,1,1}$&{$\!\begin{aligned}
&E, \BNef, 3\BNef[2]\\
& \BNef[2]+\BNef[i],\ i= 1^*,6^*,8^*,13^*,17^*,20^*,21^*,24^*\\
\end{aligned}$}\\
  \hline
14&$S_{12,1,1}$&{$\!\begin{aligned}
&E\cup \BNef\\
\end{aligned}$}\\
\hline
15&$S_{1,2,1}$&$E, \BNef$\\
\hline
16&$S_{1,3,1}$&$E, \BNef$\\
\hline
17&$S_{1,4,1}$&$E, \BNef[i], i =1-12,13^*,14,15^*$\\
\hline
18&$S_{1,5,1}$&$E, \BNef[i], i=1-17,18^*,19-26,27^*$\\
\hline
19&$S_{1,6,1}$&$E,\BNef$\\
\hline
20&$S_{1,9,1}$&{$\!\begin{aligned}
& E, \BNef, \BNef[11]+\BNef[i],  i=20^*,34^*,43^*\\
&\BNef[2]+\BNef[i], i =11^*,12^*,13^*,20^*,29^*,33^* \\
\end{aligned}$}\\
\hline
21&$S_{1,1,1}$&$E, \BNef[1],\BNef[3], \BNef[2]+\BNef[3]$\\
\hline
22&$S_{1,1,2}$&
$E, \BNef[1],\BNef[2], \BNef[1]+\BNef[2]+\BNef[3]$\\
\hline
23&$S_{1,1,3}$&$E, \BNef[1],\BNef[2],\BNef[6]$\\
\hline
24&$S_{1,1,4}$&$E, \BNef[1],\BNef[3], \BNef[7]$\\
\hline
25&$S_{1,1,6}$&{$\!\begin{aligned}
&E, \BNef[i],\ i= 1,2,3,5,6,9,13-17\\
\end{aligned}$}\\
\hline
26&$S_{1,1,8}$&{$\!\begin{aligned}
&E, \BNef[i],\ i=1,2,3,5,7,11,15,19,20,24,29,34,35,38,39,41\\
&\BNef[2]+\BNef[i],\ i =7^*,11^*,19^*\quad \BNef[19]+\BNef[i],\ i=7^*,11^*
\end{aligned}$}\\
\hline
\addlinespace
\caption{Degrees of a set of generators of $R(X)$}\label{TableGen}
\end{longtable}

\bibliography{ref}
\bibliographystyle{alpha}

\end{document}